\newtheorem{thm}{Theorem}
\newtheorem{introthm}{Theorem}
\newtheorem{lem}[thm]{Lemma}
\newtheorem{cor}[thm]{Corollary}
\newtheorem{prop}[thm]{Proposition}
\newtheorem{conj}[thm]{Conjecture}
\newtheorem{exa}[thm]{Example}
\theoremstyle{definition}
\newtheorem{defn}[thm]{Definition}
\newtheorem{rem}[thm]{Remark}
\numberwithin{thm}{section}
\numberwithin{equation}{section}
\newfont{\cyrr}{wncyr10}
\def\Sh{\mbox{\cyrr Sh}}
\def\Z{\mathbf{Z}}
\def\Q{\mathbf{Q}}
\def\F{\mathbf{F}}
\def\R{\mathbf{R}}
\def\C{\mathbf{C}}
\def\bA{\mathbf{A}}
\def\G{\mathbf{G}}
\def\Qp{\Q_p}
\def\Fp{\F_p}
\def\cS{\mathcal{S}}
\def\A{\mathcal{A}}
\def\O{\mathcal{O}}
\def\cP{\mathcal{P}}
\def\I{\mathcal{I}}
\def\B{\mathcal{B}}
\def\ld{\mathcal{h}}
\def\rd{\mathcal{i}}
\def\l{\mathfrak{q}}
\def\p{\mathfrak{p}}
\def\D{\mathcal{D}}
\def\d{\mathfrak{d}}
\def\Hom{\mathrm{Hom}}
\def\Gal{\mathrm{Gal}}
\def\rk{\mathrm{rank}}
\def\ord{\mathrm{ord}}
\def\Aut{\mathrm{Aut}}
\def\Sel{\mathrm{Sel}}
\def\End{\mathrm{End}}
\def\Res{\mathrm{Res}}
\def\Frob{\mathrm{Frob}}
\def\ur{\mathrm{ur}}
\def\GL{\mathrm{GL}}
\def\SL{\mathrm{SL}}
\def\ST{\Sh\mathrm{T}}
\def\loc{\mathrm{loc}}
\def\image{\mathrm{image}}
\def\Pic{\mathrm{Pic}}
\def\ram{\mathrm{ram}}
\def\Isom{\mathrm{Isom}}
\def\parity{\rho}
\def\ord{\mathrm{ord}}
\def\N{\mathbf{N}}
\def\Kb{\bar{K}}
\def\too{\longrightarrow}
\def\map#1{\;\xrightarrow{#1}\;}
\def\isom{\xrightarrow{\sim}}
\def\hookto{\hookrightarrow}
\def\onto{\twoheadrightarrow}
\def\dirsum#1{\underset{#1}{\textstyle\bigoplus}}
\def\Hu{H^1_{\ur}}
\def\HF{H^1_{\cS}}
\def\bmu{\boldsymbol{\mu}}
\def\bq{\mathbf{q}}
\def\Fset{\mathcal{F}}
\def\Xset{\mathcal{C}}
\def\Xsetclassic{\Xset'}
\def\H{\mathcal{H}}
\def\sgn{\mathrm{sign}_\Delta}
\def\mv#1{m_v^{#1}}
\def\one{\mathbf{1}}
\def\iK{\bA_K^\times}
\def\com#1#2{h_v(#1,#2)}
\def\compsi#1#2{h_v^\psi(#1,#2)}
\def\mag#1{|| #1 ||}
\def\e{\varepsilon}
\def\metabolic{metabolic }
\def\Metabolic{Metabolic }
\def\dm#1{\| #1 \|}
\title[Disparity in Selmer ranks of quadratic twists of elliptic curves]{Disparity in Selmer 
   ranks of quadratic twists of elliptic curves}
\author{Zev Klagsbrun}
\address{Department of Mathematics, 
University of Wisconsin\,-\,Madison,
Madison, WI 53706, 
USA}
\email{\href{mailto:klagsbru@math.wisc.edu}{klagsbru@math.wisc.edu}}
\author{Barry Mazur}
\address{Department of Mathematics, 
Harvard University,
Cambridge, MA 02138, 
USA}
\email{\href{mailto:mazur@math.harvard.edu}{mazur@math.harvard.edu}}
\author{Karl Rubin}
\address{Department of Mathematics, 
UC Irvine,
Irvine, CA 92697, 
USA}
\email{\href{mailto:krubin@math.uci.edu}{krubin@math.uci.edu}}
\subjclass[2010]{Primary: 11G05, Secondary: 11G40}
\thanks{This material is based upon work supported by the 
National Science Foundation under grants DMS-0700580, DMS-0757807, and DMS-0968831.  
Much of this work was carried out while the second and third authors were in residence at 
MSRI, and they would also like to thank MSRI for support and hospitality.
We would also like to thank the referee for helpful comments.}
\begin{document}

\begin{abstract}
We study the parity of $2$-Selmer ranks in the family of quadratic twists of 
an arbitrary elliptic curve $E$ over an arbitrary number field $K$. We prove 
that the fraction of  twists (of a given elliptic curve over a fixed number field) 
having even $2$-Selmer rank exists as a stable limit over the family of twists, and 
we compute this fraction as an explicit product of local factors. We give an example 
of an elliptic curve $E$ such that as $K$ varies, these fractions are dense in $[0, 1]$.
More generally, our results also apply to $p$-Selmer ranks of twists of $2$-dimensional 
self-dual ${\bf F}_p$-representations of the absolute Galois group of $K$ by characters 
of order $p$.
\end{abstract}

\maketitle

\tableofcontents

\section*{Introduction}

The type of question that we consider in this paper has its roots in a conjecture of Goldfeld 
\cite[Conjecture B]{goldfeld} on the distribution of Mordell-Weil ranks in the family of 
quadratic twists of an arbitrary elliptic curve over $\Q$, and a result of Heath-Brown 
\cite[Theorem 2]{heath-brown} on the distribution of $2$-Selmer ranks in the family of 
quadratic twists over $\Q$ of the  elliptic curve $y^2 = x^3 - x$.

We study here the distribution of the parities of $2$-Selmer ranks in the family of 
quadratic twists of an arbitrary elliptic curve $E$ over an arbitrary number field $K$. 
For example, let $\parity(E/K)$ be the fraction of quadratic twists of $E/K$ that have 
odd $2$-Selmer rank.  Precisely, for real numbers $X > 0$ let
$$
\Xset(K,X) := \bigl\{\chi : G_K \to \{\pm1\} : \text{$\chi$ is ramified only at primes $\l$ with $\N\l \le X$}\bigr\}
$$
and define
$$
\parity(E/K) := \lim_{X \to \infty}\frac{|\{\chi\in\Xset(K,X) : 
   \text{$\dim_{\F_2}\Sel_2(E^\chi/K)$ is odd}\}|}{|\Xset(K,X)|}.
$$
It follows from a result of Monsky \cite[Theorem 1.5]{monsky} along with root number 
calculations that $\parity(E/\Q) = 1/2$ for every elliptic curve $E/\Q$.  
It had already been noticed (see \cite{dokchitser}) that this is not true when $\Q$ 
is replaced by an arbitrary number field $K$, because there are examples with $K \ne \Q$ 
for which $\parity(E/K) = 0$, and others with $\parity(E/K) = 1$.  
Our main theorem (see Theorem \ref{sllem,p=2}) evaluates $\parity(E/K)$.

\begin{introthm}
\label{thma}
Suppose $E$ is an elliptic curve defined over a number field $K$.  Then for all sufficiently 
large $X$ we have
$$
\parity(E/K) = \frac{|\{\chi\in\Xset(K,X) : 
   \text{$\dim_{\F_2}\Sel_2(E^\chi/K)$ is odd}\}|}{|\Xset(K,X)|} = (1-\delta(E/K))/2
$$
where $\delta(E/K) \in [-1,1] \cap \Z[1/2]$ is given by an explicit finite product of 
local factors (see Definition \ref{epdef}).
\end{introthm}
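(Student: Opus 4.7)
The plan is to exploit the fact that for a quadratic character $\chi$, the $G_K$-modules $E[2]$ and $E^\chi[2]$ are canonically isomorphic (since $\chi$ acts trivially on $E[2]$), so both $\Sel_2(E/K)$ and $\Sel_2(E^\chi/K)$ sit inside $H^1(K,E[2])$ as the kernels of two Selmer structures $\cF$ and $\cF^\chi$, where $\cF^\chi$ is obtained from $\cF$ by modifying the local condition at each place $v$ where $\chi$ ramifies. The numerator in the statement can then be rewritten as $\tfrac{1}{2}|\Xset(K,X)|-\tfrac{1}{2}\sum_{\chi}(-1)^{\dim H^1_{\cF^\chi}(K,E[2])}$, and the task reduces to showing that this character sum factors as a product of local averages.

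First I would prove a local-global parity formula: for each place $v$, construct a local invariant $h_v(\chi_v) \in \Z/2\Z$ depending only on $\chi_v := \chi|_{G_{K_v}}$, such that
$$
\dim_{\F_2}H^1_{\cF^\chi}(K,E[2]) \equiv \dim_{\F_2}H^1_{\cF}(K,E[2]) + \sum_v h_v(\chi_v) \pmod{2}.
$$
The key inputs are Tate local duality (so $E[2]$ is self-dual and $\cF_v$, $\cF^\chi_v$ are Lagrangian) and global duality via the Cassels-Tate/Poitou-Tate pairing. Because any two Lagrangians $L_1, L_2$ in a metabolic $\F_2$-space satisfy $\dim L_1/(L_1\cap L_2)=\dim L_2/(L_1\cap L_2)$, one takes $h_v(\chi_v):=\dim(\cF^\chi_v/(\cF^\chi_v\cap\cF_v)) \pmod 2$, and the Greenberg-Wiles comparison formula for Selmer Euler characteristics gives the congruence.

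Next I would carry out the averaging. Let $\Sigma$ be a finite set of places containing the archimedean places, places above $2$, and places of bad reduction for $E$; for $v\notin\Sigma$ the Selmer condition $\cF_v$ is the unramified one, so $h_v(\chi_v)=0$ whenever $\chi_v$ is unramified. By class field theory, the map $\Xset(K,X)\to\prod_{v\in\Sigma}\Hom(G_{K_v},\{\pm1\})$ has kernel whose images at primes outside $\Sigma$ become equidistributed as $X\to\infty$, and a Chebotarev/orthogonality argument yields
$$
\frac{1}{|\Xset(K,X)|}\sum_{\chi\in\Xset(K,X)}\prod_v(-1)^{h_v(\chi_v)} \;\longrightarrow\; \prod_v\delta_v =: \delta(E/K),
$$
where $\delta_v$ is the average of $(-1)^{h_v(\chi_v)}$ over all quadratic local characters of $G_{K_v}$. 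A direct local computation shows $\delta_v = 1$ for $v\notin\Sigma$, so the product is effectively finite; once $X$ exceeds the norm of every prime in $\Sigma$, the adjunction of additional unramified ramification options leaves each side unchanged, giving the exact equality (not just a limit) asserted. Integrality $\delta(E/K)\in\Z[1/2]$ follows because each $\delta_v$ is a rational with denominator a power of $2$.

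The main obstacle will be establishing the local-global parity formula in clean generality. The Lagrangian property of the Selmer local conditions is classical, but one must verify that global duality produces no residual correction term beyond $\sum_v h_v$, which requires a careful application of the Cassels-Tate pairing on the self-dual module $E[2]$. This is particularly delicate at places of additive reduction and at primes above $2$, where $E[2]$ may be ramified and the local Tate pairing harder to compute explicitly. A secondary difficulty is the exactness for all sufficiently large $X$ rather than a mere asymptotic: this requires that the local factor $\delta_v$ genuinely equals $1$ (on the nose) for every $v\notin\Sigma$, so that enlarging the ramification universe past $\Sigma$ is strictly neutral.
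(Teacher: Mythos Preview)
Your strategy is essentially the paper's, but there is a genuine gap in the averaging step. You assert that a direct local computation gives $\delta_v = 1$ for every $v \notin \Sigma$, and this is false. At a prime $v \notin \Sigma$ with $\dim_{\F_2} E[2]^{G_{K_v}} = 1$ (the primes in $\cP_1$ in the paper's notation), the space $H^1(K_v,E[2])$ is two-dimensional, the unramified condition $\cF_v$ is a line, and for each of the two ramified local characters $\chi_v$ the twisted condition $\cF^\chi_v$ is a transverse line, so $h_v(\chi_v) = 1$. Thus $(-1)^{h_v}$ takes the values $1,1,-1,-1$ on the four local characters and $\delta_v = 0$. Whenever $2 \mid [K(E[2]):K]$ the set $\cP_1$ has positive density, so your infinite product $\prod_v \delta_v$ vanishes and the argument collapses; in particular the ``exact equality for large $X$'' cannot be deduced this way.

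The missing idea is the paper's $\Delta$ correction. One replaces $(-1)^{h_v(\chi_v)}$ by $\omega_v(\chi_v) := (-1)^{h_v(\chi_v)}\chi_v(\Delta)$, where $\Delta$ is a generator of the cyclic group $\ker\bigl(K^\times/(K^\times)^2 \to K(E[2])^\times/(K(E[2])^\times)^2\bigr)$ (the discriminant of $E$ works). Global reciprocity gives $\prod_v \chi_v(\Delta) = 1$, so $\prod_v \omega_v(\chi_v) = \prod_v (-1)^{h_v(\chi_v)}$ for every global $\chi$; but now a genuinely local check (Lemma~\ref{5.6}) shows that for $v \in \cP_1$ one has $\chi_v(\Delta) = -1$ precisely when $\chi_v$ is ramified, so $\omega_v \equiv 1$ for all $v \notin \Sigma$. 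This is what reduces the parity of $r(\chi)$ to a function of the image of $\chi$ in the finite group $\Gamma_1 = \prod_{v\in\Sigma}\Xset(K_v)$ (Proposition~\ref{5.1,p=2}), after which surjectivity of the homomorphism $\Xset(K,X) \to \Gamma_1$ for large $X$ gives the exact equality.

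A smaller point: the parity comparison $\dim H^1_{\cF^\chi} \equiv \dim H^1_{\cF} + \sum_v h_v(\chi_v) \pmod 2$ does not follow from Greenberg--Wiles alone when $p=2$. One needs the Poonen--Rains quadratic-form refinement of the local Tate pairing so that the image of the global classes is Lagrangian for the quadratic form (not merely for the bilinear pairing), and then a three-Lagrangian parity identity (Proposition~\ref{ell2}) to compare the two Selmer groups; this is the content of Theorem~\ref{compsel}.
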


We call $\delta(E/K)$ the ``disparity'' in the distribution of $2$-Selmer ranks of twists of $E$.
If $K$ has a real embedding then $\delta(E/K) = 0$ so $\parity(E/K) = 1/2$ (see Corollary \ref{6.8}).  
On the other hand, 
Example \ref{50a1} exhibits a particular elliptic curve $E/\Q$ such that as $K$ varies, the 
set $\{\delta(E/K)\}$ is dense in $[-1,1]$, so $\{(1-\delta(E/K))/2\}$ is dense in $[0,1]$.  

The finiteness of the $2$-part of the Shafarevich-Tate group would imply that the parity 
of the $2$-Selmer rank is the same as the parity of the Mordell-Weil rank.  Thus one would 
expect that Theorem \ref{thma} holds with $2$-Selmer rank replaced by Mordell-Weil rank.  
Further, Theorem \ref{thma} suggests a natural generalization of Goldfeld's conjecture 
(see Conjecture \ref{gcK}).

In a forthcoming paper \cite{kmr2}, we will use the methods of this paper to make a finer study of the 
distribution of $2$-Selmer ranks, inspired by the work of Heath-Brown \cite{heath-brown}, 
Swinnerton-Dyer \cite{sw-d}, and Kane \cite{kane}.

Our methods begin with those of \cite{MRvisibility} and \cite{MRstablerank}.
Namely, we view all of the Selmer groups $\Sel_2(E^\chi/K)$ as subspaces of $H^1(K,E[2])$, 
defined by local conditions that vary with $\chi$.  In this way we can attach a Selmer group to 
a collection of local quadratic characters.  The question of which collections of 
local characters arise from global characters is an exercise in class field theory 
(see \S\ref{lgc}).

To prove Theorem \ref{thma}, we show that the parity of 
$\dim_{\F_2}\Sel_2(E^\chi/K)$ depends only on the restrictions of $\chi$ to the 
decomposition groups at places dividing $2 \Delta_E \infty$, where $\Delta_E$ is 
the discriminant of some model of $E$ (see Proposition \ref{5.1,p=2}).  
(This is consistent with the behavior of the global root numbers of twists of $E$.)
In particular the map that sends a character $\chi \in \Hom(G_K,\{\pm 1\})$ to 
the parity of $\dim_{\F_2}\Sel_2(E^\chi/K)$ factors through the finite quotient 
$\prod_{v \mid 2\Delta_E\infty}\Hom(G_{K_v},\{\pm 1\})$.
Using this fact we are able to deduce Theorem \ref{thma}.

There is another important ingredient in the proof of Theorem \ref{thma}.  
We make essential use of a recent observation of Poonen and Rains \cite{poonenrains} that 
the local conditions that define the $2$-Selmer groups we are studying are maximal isotropic subspaces 
for a natural quadratic form on the local cohomology groups $H^1(K_v,E[2])$.  We use this in a 
crucial way in the proof of Theorem \ref{compsel}, which extends a result from \cite{MRvisibility} 
to include the case $p=2$.  Theorem \ref{compsel} is a key ingredient in the proof of  
Theorem \ref{thma}.

Our methods apply much more generally than to $2$-Selmer groups of elliptic curves, and 
throughout this paper we work in this fuller generality.  Namely, suppose $p$ is any prime, 
and $T$ is a $2$-dimensional $\Fp$-vector space with 
\begin{itemize}
\item
an action of the absolute Galois group $G_K$, 
\item
a nondegenerate $G_K$-equivariant alternating $\bmu_p$-valued pairing, and 
\item
a ``global \metabolic structure'' (see Definition \ref{ahsdef}).  
\end{itemize}
We also assume we are given ``twisting data'' 
(Definition \ref{twistdata}) that allows us to define a family of Selmer groups 
$\Sel(T,\chi)$ as $\chi$ runs through characters of $G_K$ of order $p$.  We have analogues 
of Theorem \ref{thma} describing the distribution of $\dim_{\Fp}\Sel(T,\chi)$ 
in this setting.

For example, if $E$ is an elliptic curve over $K$, then $T := E[p]$, the kernel of 
multiplication by $p$ on $E$, comes equipped with 
all the structure we require.  When $p > 2$ the Selmer groups $\Sel(E[p],\chi)$ 
are not Selmer groups of elliptic curves, but they are Selmer groups of $(p-1)$-dimensional 
abelian varieties over $K$ that are twists of $E$ in the sense of \cite{mrs}.  See \S\ref{eex}, 
and see Theorem \ref{sllem} for the analogue of Theorem \ref{thma} in this setting.

The layout of the paper is as follows.  Let $T$ be a Galois module as above.  
In \S\ref{MS} we derive some elementary properties of Lagrangian subspaces in quadratic 
vector spaces that we will need in the sequel.
In \S\ref{hsss} we define \metabolic structures 
and Selmer groups in the generality we will need them.  The key result is Theorem \ref{compsel}, 
which shows how the parity of the Selmer rank changes when we change some of the defining 
local conditions.  In \S\ref{twistsec} we define the Selmer groups associated to twists of $T$, 
and in \S\ref{eex} we show how these Selmer groups reduce to classical Selmer groups of twists 
when $T = E[p]$ for an elliptic curve $E/K$.

Section \ref{lgc} uses class field theory to allow us to go back and 
forth between global characters and collections of local characters.  In \S\ref{pd2} we 
study ``parity disparity'' when $p=2$, and prove Theorem \ref{thma}, and in \S\ref{pdp} 
we obtain similar but not identical results when $p>2$.

\section{Notation}
\label{notation}
Fix a number field $K$ and a rational prime $p$.  Let $\Kb$ denote a fixed algebraic 
closure of $K$, and $G_K := \Gal(\Kb/K)$.  Let $\bmu_p$ denote the group of $p$-th roots 
of unity in $\Kb$.

Throughout this paper $T$ will denote a two-dimensional 
$\Fp$-vector space with a continuous action of $G_K$, and with a nondegenerate 
$G_K$-equivariant alternating pairing corresponding to an isomorphism
\begin{equation}
\label{weilpair}
\wedge^2T \isom \bmu_p.
\end{equation}

We will use $v$ (resp., $\l$) for a place (resp., nonarchimedean place, or prime ideal) of $K$.
If $v$ is a place of $K$, we let $K_v$ denote the completion of $K$ at $v$, and 
$K_v^\ur$ its maximal unramified extension.  
We say that $T$ is unramified at $v$ if the inertia subgroup of $G_{K_v}$ 
acts trivially on $T$, and in that case we define the unramified subgroup 
$\Hu(K_v,T) \subset H^1(K_v,T)$ by
$$
\Hu(K_v,T) := H^1(K_v^\ur/K_v,T) = \ker [H^1(K_v,T) \to H^1(K_v^\ur,T)].
$$
If $c \in H^1(K,T)$ and $v$ is a place of $K$, we will often abbreviate 
$c_v := \loc_v(c)$ for the localization of $c$ in $H^1(K_v,T)$.

We also fix a finite set $\Sigma$ of places of $K$, containing all places where $T$ is ramified, all 
primes above $p$, and all archimedean places.

\section{\Metabolic spaces}
\label{MS}

Fix for this section a finite dimensional $\Fp$-vector space $V$.

\begin{defn}
\label{qfdef}
A {\em quadratic form} on $V$ is a function $q : V \to \Fp$ such that 
\begin{itemize}
\item
$q(av) = a^2 q(v)$ for every $a \in \Fp$ and $v \in V$,
\item
the map $(v,w)_q := q(v+w)-q(v)-q(w)$ is a bilinear form.
\end{itemize}
If $X \subset V$, we denote by $X^\perp$ the orthogonal complement of $X$ in $V$ under 
the pairing $(\;\;,\,\;)_q$.  We say that $(V,q)$ is a {\em \metabolic space} if 
$(\;\;,\,\;)_q$ is nondegenerate and $V$ has a subspace 
$X$ such that $X = X^\perp$ and $q(X) = 0$.  Such a subspace $X$ is called 
a {\em Lagrangian subspace} of $V$.
\end{defn}

For this section, if $W$ is an $\Fp$-vector space we let $\dm{W} := \dim_{\Fp}(W)$.

\begin{lem}
\label{ell0}
Suppose $(V,q)$ is a \metabolic space, $X$ is a Lagrangian subspace, and $W$ is a subspace 
of $V$ such that $q(W) = 0$.  
Then $W^\perp \cap X + W$ is a Lagrangian subspace of $V$.
\end{lem}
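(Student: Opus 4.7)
The plan is to verify the three defining properties of a Lagrangian subspace for $L := W^\perp \cap X + W$, namely (a) $q$ vanishes on $L$, (b) $L \subseteq L^\perp$, and (c) $\dim L = \dm{V}/2$. Properties (a) and (b) go together: for any $x \in W^\perp\cap X$ and $w \in W$, expanding $q(x+w) = q(x) + q(w) + (x,w)_q$ gives $0$, since $q(X) = 0$, $q(W) = 0$, and $x \in W^\perp$ kills $w$. Applying this to two generic elements of $L$ simultaneously shows both that $q$ vanishes on $L$ and that $L \subseteq L^\perp$, so it suffices to compute the dimension.

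For (c), I would use inclusion–exclusion on the sum and then a duality argument. Namely,
\begin{equation*}
\dm{L} = \dm{W^\perp\cap X} + \dm{W} - \dm{W^\perp\cap X \cap W}.
\end{equation*}
Because $q(W) = 0$ implies $W \subseteq W^\perp$, the last term simplifies to $\dm{W \cap X}$. The main input is then computing $\dm{W^\perp\cap X}$, and here is where the Lagrangian hypothesis enters: consider the bilinear pairing $X \times W \to \Fp$ obtained by restricting $(\;,\,)_q$. Its left kernel is $W^\perp \cap X$ and its right kernel is $X^\perp \cap W = X \cap W$ (using $X^\perp = X$). Since $(\;,\,)_q$ is nondegenerate and $\dm{X} = \dm{V}/2$, equality of left and right ranks yields
\begin{equation*}
\dm{X} - \dm{W^\perp\cap X} = \dm{W} - \dm{X\cap W}.
\end{equation*}

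Substituting this into the previous display makes the $\dm{W}$ and $\dm{X\cap W}$ terms cancel, leaving $\dm{L} = \dm{X} = \dm{V}/2$. Combined with $L \subseteq L^\perp$ and the fact that $\dm{L^\perp} = \dm{V} - \dm{L}$ (from nondegeneracy of $(\;,\,)_q$), this forces $L = L^\perp$, completing the proof. The only mildly delicate step is the rank-equality for the restricted pairing; the key point is to recognize that $X^\perp = X$ turns the right kernel into $X \cap W$, after which the dimension bookkeeping is automatic.
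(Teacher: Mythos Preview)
Your argument is correct: the verification that $q$ vanishes on $L$ is clean, and the dimension count via the rank of the restricted pairing $X \times W \to \Fp$ (whose right kernel becomes $X \cap W$ precisely because $X^\perp = X$) is the right idea and is carried out accurately. The paper itself does not give a proof here---it simply records the lemma as an exercise with a pointer to \cite[Remark 2.4]{poonenrains}---so your write-up in fact supplies what the paper omits.
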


\begin{proof}
Exercise.  See for example \cite[Remark 2.4]{poonenrains}.
\end{proof}

\begin{lem}
\label{ell1}
Suppose $(V,q)$ is a \metabolic space and $X$, $Y$, and $Z$ are Lagrangian subspaces of $V$.  
Then
$$
\dm{(X+Y) \cap Z} \equiv \dm{(X \cap Z) + (Y \cap Z)} \pmod{2}.
$$
\end{lem}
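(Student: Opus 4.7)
The plan is to construct a natural nondegenerate alternating bilinear form on the quotient
$$
Q := \bigl((X+Y) \cap Z\bigr)\big/\bigl((X \cap Z) + (Y \cap Z)\bigr)
$$
(noting that $(X \cap Z) + (Y \cap Z) \subseteq (X+Y) \cap Z$ holds tautologically). The existence of such a form will force $\dm{Q}$ to be even, and since $\dm{Q}$ is exactly the difference of the two dimensions appearing in the lemma, this yields the desired congruence.

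For $z \in (X+Y) \cap Z$, pick any decomposition $z = x+y$ with $x \in X$ and $y \in Y$, and define $\omega(z_1, z_2) := (x_1, y_2)_q$. First I would verify that $\omega$ is well defined independent of the choices: altering $x_1$ by $w \in X \cap Y$ changes $\omega$ by $(w, y_2)_q$, which vanishes because $y_2 \in Y \subseteq (X \cap Y)^\perp = X+Y$; altering $x_2$ by $w' \in X \cap Y$ changes $\omega$ by $(x_1, w')_q$, which vanishes because $x_1, w' \in X$ and $X$ is Lagrangian. Next, writing $z = x+y$ one has $q(z) = q(x) + q(y) + (x,y)_q$; since $q(x) = q(y) = q(z) = 0$, we conclude $\omega(z,z) = (x,y)_q = 0$, so $\omega$ is alternating (and hence skew-symmetric) on $(X+Y) \cap Z$.

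The heart of the argument is to identify the radical of $\omega$ as exactly $(X \cap Z) + (Y \cap Z)$. The easy inclusion follows because elements of $X \cap Z$ admit the decomposition with zero $y$-component and those of $Y \cap Z$ admit the decomposition with zero $x$-component, so both pair trivially. For the reverse inclusion, I would first observe that as $z' = x'+y'$ ranges over $(X+Y) \cap Z$, the $y$-component $y'$ ranges over precisely $Y \cap (X+Z)$. Thus if $z = x + y$ lies in the radical, $x$ annihilates every element of $Y \cap (X+Z)$, so
$$
x \in \bigl(Y \cap (X+Z)\bigr)^\perp = Y^\perp + (X+Z)^\perp = Y + (X \cap Z),
$$
using the Lagrangian hypotheses. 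Combined with $x \in X$, the modular law gives $x \in (X \cap Y) + (X \cap Z)$; writing $x = u + v$ with $u \in X \cap Y$ and $v \in X \cap Z$ and rearranging yields $z = v + (u+y)$, where $v \in X \cap Z$ and $u + y \in Y \cap Z$ (it lies in $Y$ since $u, y \in Y$, and in $Z$ since $u+y = z - v$).

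With the radical identified, $\omega$ descends to a nondegenerate alternating form on $Q$, which forces $\dm{Q}$ to be even. The verification $\omega(z,z) = 0$ is exactly what keeps this conclusion valid in characteristic $2$, where alternating is strictly stronger than skew-symmetric. I expect the most delicate point to be the well-definedness of $\omega$ together with the perp identity $\bigl(Y \cap (X+Z)\bigr)^\perp = Y + (X \cap Z)$; both hinge crucially on $X = X^\perp$, $Y = Y^\perp$, and $Z = Z^\perp$, so the proof genuinely uses the Lagrangian property of all three subspaces.
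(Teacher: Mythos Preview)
Your proof is correct and follows essentially the same strategy as the paper: define the alternating form $\omega(z_1,z_2)=(x_1,y_2)_q$ on $(X+Y)\cap Z$ and show its radical is exactly $(X\cap Z)+(Y\cap Z)$. The only difference is in how you pin down the radical---the paper invokes Lemma~\ref{ell0} to see that $(X+Y)\cap Z + X\cap Y$ is Lagrangian (hence self-perp), whereas you use the identity $(Y\cap(X+Z))^\perp = Y + (X\cap Z)$ together with the modular law, which is a slightly more direct route that avoids appealing to Lemma~\ref{ell0}.
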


\begin{proof}
We adapt the proof of \cite[Proposition 1.3]{MRvisibility} (see also \cite[Lemma 1.5.7]{howard}).  
We define an alternating nondegenerate 
pairing on $((X+Y) \cap Z)/(X \cap Z + Y \cap Z)$, as follows.

Suppose $z, z' \in (X+Y) \cap Z$.  Write $z = x+y$ and $z' = x'+y'$ with $x,x' \in X$ and $y \in Y$, and define 
\begin{equation}
\label{hpdef}
[z,z'] := (x,y')_q.
\end{equation}
Note that $x$ and $y'$ are well-defined modulo $X \cap Y = (X+Y)^\perp$, so $[z,z']$ does 
not depend on the choice of $x$ or $y'$.  Thus $[\;\;,\;\,]$ is a well-defined bilinear pairing 
on $(X+Y) \cap Z$.  

By definition we have 
$$
[z,z] = (x,y)_q = q(x+y) - q(x) - q(y).
$$
Since $X$, $Y$, and $Z$ are Lagrangian, and $x+y = z$, we have $q(x+y) = q(x) = q(y) = 0$, 
so $[z,z] = 0$ for every $z$, i.e., $[\;\;,\;\,]$ is alternating (and therefore also skew-symmetric).

If $z \in Y \cap Z$ then we can take $x = 0$ in \eqref{hpdef}, so $[z,z'] = 0$ for every $z'$.  
Using the skew-symmetry we deduce that $Y \cap Z$ is in the (left and right) 
kernel of the pairing    
$[\;\;,\;\,]$.  Similarly $X \cap Z$, and hence $X \cap Z + Y \cap Z$, is in the kernel.

Conversely, if $z$ is in the kernel of this pairing, then 
(still writing $z = x+y$ with $x \in X$ and $y \in Y$)
$$
0 = [z,z'] = (x,y')_q = (x,z')_q  = (x,z'+x'')_q
$$
for every $z' \in (X+Y) \cap Z$ and $x'' \in X \cap Y$.  
Applying Lemma \ref{ell0} with $W = X\cap Y$, $W^\perp = X+Y$ we see that 
$$
x \in ((X+Y) \cap Z + X \cap Y)^\perp = (X+Y) \cap Z + X \cap Y.
$$
Thus, modifying $x$ and $y$ by an element of $X \cap Y$, we may assume that $x \in Z$, 
and then $y = z-x \in Z$ as well, so $z \in X \cap Z + Y \cap Z$.  

This completes the proof that the pairing \eqref{hpdef} is alternating and nondegenerate 
on $((X+Y) \cap Z)/(X \cap Z + Y \cap Z)$.  A standard argument now shows that the dimension
$\dm{((X+Y) \cap Z)/(X \cap Z + Y \cap Z)}$ is even, and the lemma follows.
\end{proof}

\begin{prop}
\label{ell2}
Suppose $(V,q)$ is a \metabolic space and $X$, $Y$, and $Z$ are Lagrangian subspaces of $V$.  
Then
$$
\textstyle
\dm{X \cap Y} + \dm{Y \cap Z} + \dm{X \cap Z} \equiv \frac{1}{2}\dm{V} = \dm{X} = \dm{Y} = \dm{Z} \pmod{2}.
$$
\end{prop}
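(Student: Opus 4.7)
The plan is to compute $\dm{(X+Y) \cap Z}$ in two different ways and compare them modulo $2$, using Lemma \ref{ell1} together with standard dimension formulas and the fact that for any subspace $W \subset V$ one has $\dm{W^\perp} = \dm{V} - \dm{W}$ (which holds because the bilinear form $(\;,\;)_q$ is nondegenerate).

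First I would establish that $\dm{X} = \dm{Y} = \dm{Z} = \tfrac12\dm{V}$; this is immediate from $X = X^\perp$ together with the dimension formula just mentioned, and similarly for $Y$ and $Z$. Next, I would compute $\dm{(X+Y) \cap Z}$ directly using inclusion-exclusion:
$$
\dm{(X+Y) \cap Z} = \dm{X+Y} + \dm{Z} - \dm{X+Y+Z}.
$$
Since $(X+Y)^\perp = X \cap Y$ and $(X+Y+Z)^\perp = X \cap Y \cap Z$, the right-hand side becomes
$$
(\dm{V} - \dm{X\cap Y}) + \tfrac12\dm{V} - (\dm{V} - \dm{X\cap Y\cap Z}) = \tfrac12\dm{V} - \dm{X\cap Y} + \dm{X\cap Y\cap Z}.
$$

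On the other hand, Lemma \ref{ell1} gives
$$
\dm{(X+Y) \cap Z} \equiv \dm{(X\cap Z) + (Y \cap Z)} \pmod 2,
$$
and by inclusion-exclusion $\dm{(X\cap Z) + (Y\cap Z)} = \dm{X\cap Z} + \dm{Y\cap Z} - \dm{X\cap Y\cap Z}$.

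Equating the two expressions modulo $2$, the $\dm{X \cap Y \cap Z}$ terms cancel (since $-a \equiv a \pmod 2$), leaving
$$
\tfrac12\dm{V} - \dm{X\cap Y} \equiv \dm{X\cap Z} + \dm{Y\cap Z} \pmod 2,
$$
which rearranges to the desired congruence. There is essentially no obstacle: the only subtle ingredient is Lemma \ref{ell1}, which has already been established, and the rest is bookkeeping with dimension formulas and orthogonal complements.
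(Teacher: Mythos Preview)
Your proof is correct. Both your argument and the paper's hinge on Lemma~\ref{ell1} and the dimension formula $\dm{U}+\dm{W}=\dm{U+W}+\dm{U\cap W}$, but they diverge in how they finish. The paper rewrites $\dm{X\cap Y}+\dm{(X+Y)\cap Z}+\dm{X\cap Y\cap Z}$ as $\dm{(X+Y)\cap Z + X\cap Y}+2\dm{X\cap Y\cap Z}$ and then invokes Lemma~\ref{ell0} (with $W=X\cap Y$) to recognize $(X+Y)\cap Z + X\cap Y$ as Lagrangian, hence of dimension $\tfrac12\dm{V}$. You instead compute $\dm{(X+Y)\cap Z}$ directly via $\dm{X+Y}+\dm{Z}-\dm{X+Y+Z}$ and evaluate the terms using $\dm{W^\perp}=\dm{V}-\dm{W}$ together with $(X+Y)^\perp=X\cap Y$ and $(X+Y+Z)^\perp=X\cap Y\cap Z$. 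Your route bypasses Lemma~\ref{ell0} entirely, trading it for the elementary fact about dimensions of orthogonal complements under a nondegenerate form; the paper's route has the small advantage of exhibiting an explicit Lagrangian subspace, which is in the spirit of the section but not strictly needed here.
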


\begin{proof}
For subspaces $U, W$ of $V$ we have $\dm{U}+\dm{W} = \dm{U+W} + \dm{U \cap W}$.  
This identity gives the two equalities below, and Lemma \ref{ell1} gives the congruence:
\begin{align*}
\dm{X \cap Y} + \dm{Y \cap Z} + \dm{&X \cap Z} 
   = \dm{X \cap Y} + \dm{X \cap Z + Y \cap Z} + \dm{X \cap Y \cap Z} \\
   &\equiv \dm{X \cap Y} + \dm{(X+Y) \cap Z} + \dm{X \cap Y \cap Z} \pmod{2}\\
   &= \dm{(X+Y) \cap Z + X \cap Y} + 2\dm{X \cap Y \cap Z}.
\end{align*}
By Lemma \ref{ell0}, the subspace $(X+Y) \cap Z + X \cap Y$ is Lagrangian, and all 
Lagrangian subspaces have the same dimension $\frac{1}{2}\dm{V}$, so this completes the proof.
\end{proof}

\begin{cor}
\label{ell3}
Suppose $(V,q)$ is a \metabolic space and $X$, $Y$, and $Z$ are Lagrangian subspaces of $V$.  
Then
$$
\dm{X/(X \cap Y)} + \dm{Y/(Y \cap Z)} + \dm{Z/(X \cap Z)}  \equiv 0 \pmod{2}. 
$$
\end{cor}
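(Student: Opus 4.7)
The plan is to deduce this directly from Proposition \ref{ell2}, using the fact that all Lagrangian subspaces of a metabolic space have the same dimension (namely $\frac{1}{2}\dm{V}$).

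First, I would rewrite each quotient dimension as a difference: since $X \cap Y \subset X$, $Y \cap Z \subset Y$, and $X \cap Z \subset Z$, we have
\begin{align*}
\dm{X/(X\cap Y)} &= \dm{X} - \dm{X\cap Y}, \\
\dm{Y/(Y\cap Z)} &= \dm{Y} - \dm{Y\cap Z}, \\
\dm{Z/(X\cap Z)} &= \dm{Z} - \dm{X\cap Z}.
\end{align*}
Summing gives
$$
\bigl(\dm{X}+\dm{Y}+\dm{Z}\bigr) - \bigl(\dm{X\cap Y}+\dm{Y\cap Z}+\dm{X\cap Z}\bigr).
$$

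Next I would invoke Proposition \ref{ell2}, which says the second parenthesized sum is congruent to $\frac{1}{2}\dm{V} = \dm{X}$ mod $2$, while $\dm{X}+\dm{Y}+\dm{Z} = 3 \cdot \frac{1}{2}\dm{V} \equiv \dm{X} \pmod{2}$. The two contributions therefore cancel modulo $2$, yielding the claimed congruence. There is no real obstacle here: the corollary is a formal bookkeeping consequence of the previous proposition combined with the equidimensionality of Lagrangians, so the argument is essentially one display of arithmetic.
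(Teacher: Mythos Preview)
Your proof is correct and is exactly the approach the paper takes: the paper simply says ``This follows directly from Proposition \ref{ell2},'' and you have spelled out the one-line arithmetic that makes this explicit.
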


\begin{proof}
This follows directly from Proposition \ref{ell2}.
\end{proof}

\section{\Metabolic structures and Selmer structures}
\label{hsss}

In this section we define what we mean by a global \metabolic structure $\bq$ on $T$, 
and by a Selmer group for $T$ and $\bq$.  The main result is Theorem \ref{compsel}, 
which shows how the parity of the Selmer rank changes when we change the defining local 
conditions.

The cup product and the pairing \eqref{weilpair} induce a pairing
$$
H^1(K,T) \times H^1(K,T) \map{\;\cup\;} H^2(K,T \otimes T) \too H^2(K,\bmu_p).
$$
If $v$ is a place of $K$ and $K_v$ is the completion of $K$ at $v$, then 
applying the same construction over the field $K_v$ gives local pairings
$$
H^1(K_v,T) \times H^1(K_v,T) \map{\;\cup\;} H^2(K_v,T \otimes T) \too H^2(K_v,\bmu_p).
$$
For every $v$ there is a canonical inclusion $H^2(K_v,\bmu_p) \hookto \Fp$
that is an isomorphism unless either $K_v = \C$, or $K_v = \R$ and $p > 2$.
The local Tate pairing is the composition
\begin{equation}
\label{tatepair}
\ld \;\;,\;\rd_v : H^1(K_v,T) \times H^1(K_v,T) \too \Fp.
\end{equation}

The Tate pairings satisfy the following well-known properties.

\begin{thm}
\label{tateprop}
\hfill
\begin{enumerate}
\item
For every $v$, the pairing $\ld \;\;,\;\rd_v$ is symmetric and nondegenerate.
\item
If $v \notin \Sigma$ then $\Hu(K_v,T) \subset H^1(K_v,T)$ is equal to its own orthogonal complement 
under $\ld \;\;,\;\rd_v$.
\item
If $c, d \in H^1(K,T)$, then $\ld c_v,d_v \rd_v = 0$ for almost all $v$ and 
$\sum_v \ld c_v,d_v \rd_v = 0$.
\end{enumerate} 
\end{thm}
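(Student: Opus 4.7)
The statement collects three standard facts from local and global Tate duality, so my plan is to reduce each part to a known input rather than build the theory from scratch.

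For part (i), symmetry is a combinatorial consequence of two sign flips. The cup product $H^1(K_v,T)\times H^1(K_v,T)\to H^2(K_v,T\otimes T)$ is graded-commutative, hence antisymmetric on degree-one classes; composing with the Weil pairing $\wedge^2 T\isom\bmu_p$ introduces a second sign because the pairing is alternating, and the two signs cancel. To argue this cleanly I would factor the cup product through the symmetric/antisymmetric decomposition of $T\otimes T$ and observe that the alternating pairing $\wedge^2 T \to \bmu_p$ is nontrivial only on the antisymmetric part, which kills the would-be antisymmetry of the cup product. Nondegeneracy is then just local Tate duality: the alternating pairing identifies $T$ with its Cartier dual $\Hom(T,\bmu_p)$ as $G_{K_v}$-modules, so the Tate duality isomorphism $H^1(K_v,T)\isom H^1(K_v,\Hom(T,\bmu_p))^{\vee}$ translates into self-duality of $H^1(K_v,T)$ under $\ld\;\;,\;\rd_v$.

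For part (ii), I would invoke the standard fact that, for $v\notin\Sigma$, the module $T$ is unramified and the residue characteristic is different from $p$ (when $v\nmid p$) or handled by the hypothesis $\Sigma\supset\{v\mid p\}$. Under these hypotheses, local Tate duality combined with the Euler–Poincaré characteristic formula gives $|\Hu(K_v,T)|^2 = |H^1(K_v,T)|$, so it suffices to show that $\Hu(K_v,T)$ is isotropic. Isotropy follows because cocycles inflated from $\Gal(K_v^{\ur}/K_v)$, a group of cohomological dimension one, cup to a class in $H^2(K_v^{\ur}/K_v,\bmu_p)$ which is trivial (as $K_v^{\ur}$ contains all roots of unity and has trivial Brauer group for unramified $v$ away from $p$, so inflation of $H^2$ vanishes). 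Combining isotropy with the order count forces $\Hu(K_v,T)=\Hu(K_v,T)^\perp$.

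For part (iii), this is the reciprocity statement of global Tate duality. The local-to-global picture goes through the commutative diagram in which the global cup product $H^1(K,T)\times H^1(K,T) \to H^2(K,\bmu_p)$ is followed by the sum-of-invariants map $H^2(K,\bmu_p) \to \bigoplus_v H^2(K_v,\bmu_p) \map{\sum\mathrm{inv}_v} \Fp$, which vanishes by global class field theory. Compatibility of cup product with localization means $\sum_v \ld c_v,d_v\rd_v$ equals the image of the global cup product under this sum of invariants, which is zero. Almost-everywhere vanishing is automatic: for $v$ outside $\Sigma$ and outside the finite set of places where $c$ or $d$ ramifies, both $c_v$ and $d_v$ lie in $\Hu(K_v,T)$, which is isotropic by part (ii).

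The main obstacle is bookkeeping rather than depth: the symmetry calculation in (i) and the isotropy step in (ii) require care with the sign conventions of cup product and with the precise definition of the inflation map, but no new ideas beyond standard local and global duality are needed, and for an expository proof I would simply cite Milne or Neukirch–Schmidt–Wingberg after indicating the three ingredients above.
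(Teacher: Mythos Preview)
Your proposal is correct and matches the paper's approach: the paper treats this theorem as a collection of standard facts, citing Milne for (i) and (ii) and deducing (iii) from (ii) together with the vanishing of the sum of local invariants of a global Brauer class. You supply more of the underlying mechanics (the sign cancellation for symmetry, the cohomological-dimension and order-count argument for (ii)), but the overall strategy and the references are the same.
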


\begin{proof}
For (i) and (ii), see for example \cite[Corollary  I.2.3 and Theorem I.2.6]{milne}.
The first part of (iii) follows from (ii), and the second from the fact that 
the sum of the local invariants of an element of the global Brauer group is zero.
\end{proof}

\begin{defn}
Suppose $v$ is a place of $K$.  
We say that $q$ is a {\em Tate quadratic form} on $H^1(K_v,T)$ if the bilinear form 
induced by $q$ (Definition \ref{qfdef}) is $\ld \;\;,\;\, \rd_v$.  
If $v \notin \Sigma$, then 
we say that $q$ is {\em unramified} if $q(x) = 0$ for all $x \in \Hu(K_v,T)$.
\end{defn}

\begin{defn}
\label{ahsdef}
Suppose $T$ is as above.  A {\em global \metabolic structure} $\bq$ on $T$ consists 
of a Tate quadratic form $q_v$ on $H^1(K_v,T)$ for every place $v$, such that
\begin{enumerate}
\item
$(H^1(K_v,T),q_v)$ is a \metabolic space for every $v$,
\item
if $v \notin \Sigma$ then $q_v$ is unramified,
\item
if $c \in H^1(K,T)$ then $\sum_v q_v(c_v) = 0$.
\end{enumerate} 
\end{defn}

Note that if $c \in H^1(K,T)$ then $c_v \in \Hu(K_v,T)$ for almost all $v$, 
so the sum in Definition \ref{ahsdef}(iii) is finite.

\begin{lem}
\label{2.4}
If $p > 2$ then there is a unique Tate quadratic form $q_v$ on $H^1(K_v,T)$ for every $v$, 
and a unique global \metabolic structure on $T$.
\end{lem}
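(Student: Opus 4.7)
The plan is to exploit that for $p > 2$ the polarization map from quadratic forms to symmetric bilinear forms on an $\Fp$-vector space is injective, so any Tate quadratic form on $H^1(K_v,T)$ must coincide with the explicit candidate $q_v(x) := \frac{1}{2}\langle x,x\rangle_v$. First I would establish local uniqueness: the polarization identity gives $(v,v)_q = q(2v) - 2q(v) = 4q(v) - 2q(v) = 2q(v)$, so $q(v) = \frac{1}{2}(v,v)_q$ for any quadratic form $q$ on an $\Fp$-vector space with $p > 2$. In particular, any Tate quadratic form, whose polarization is prescribed to equal the Tate pairing, must agree with $\frac{1}{2}\langle\cdot,\cdot\rangle_v$.

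Next I would verify that $q_v := \frac{1}{2}\langle\cdot,\cdot\rangle_v$ really is a Tate quadratic form. The axiom $q_v(ax) = a^2 q_v(x)$ is immediate, and the polarization computation $(x,y)_{q_v} = \langle x,y\rangle_v$ follows by expanding $\langle x+y,x+y\rangle_v$ using the symmetry of the Tate pairing (which in turn comes from graded commutativity of cup product combined with the alternating pairing $T \otimes T \to \bmu_p$). I would then check the axioms of Definition \ref{ahsdef}: axiom (iii) is immediate from Theorem \ref{tateprop}(iii) applied with $d = c$, since $\sum_v q_v(c_v) = \frac{1}{2}\sum_v \langle c_v, c_v\rangle_v = 0$; and axiom (ii) follows from Theorem \ref{tateprop}(ii), since $\Hu(K_v,T)$ being self-orthogonal forces $\langle x,x\rangle_v = 0$ on it. Uniqueness of the global structure then reduces immediately to local uniqueness of each $q_v$.

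The main obstacle is axiom (i), that $(H^1(K_v,T), q_v)$ is metabolic at every $v$. Since $p > 2$ and $q_v = \frac{1}{2}\langle\cdot,\cdot\rangle_v$, the condition $q_v(X) = 0$ is equivalent to $X$ being totally isotropic for the symmetric Tate pairing, so a Lagrangian for $q_v$ coincides with a maximal isotropic subspace for $\langle\cdot,\cdot\rangle_v$. For $v \notin \Sigma$, the space $\Hu(K_v,T)$ supplies such a Lagrangian by Theorem \ref{tateprop}(ii). For $v \in \Sigma$, I would invoke the hyperbolicity of the self-dual symmetric $\Fp$-space $H^1(K_v,T)$, which is a standard consequence of local Tate duality together with the $G_K$-equivariant isomorphism $T \isom \Hom(T,\bmu_p)$ furnished by \eqref{weilpair}.
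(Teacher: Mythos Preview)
Your proposal is correct and follows the same approach as the paper: set $q_v(x) = \tfrac{1}{2}\ld x,x\rd_v$, observe this is forced by the polarization identity since $p$ is odd, and verify axioms (ii) and (iii) of Definition~\ref{ahsdef} via Theorem~\ref{tateprop}(ii),(iii). You actually go further than the paper's own proof in trying to verify axiom~(i) at places $v \in \Sigma$; the paper's proof omits this point entirely (for $T = E[p]$ a Lagrangian at every $v$ is later supplied by the Kummer image in Lemma~\ref{lagrangian}).
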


\begin{proof}
Since $p \ne 2$, for every $v$ 
there is a unique Tate quadratic form $q_v$ on $H^1(K_v,T)$, namely 
$$
q_v(x) := \textstyle\frac{1}{2}\ld x,x\rd_v.
$$
If $v \notin \Sigma$ then $q_v$ is unramified by Theorem \ref{tateprop}(ii), 
and if $c \in H^1(K,T)$ then 
$\sum_v q_v(c_v) = \frac{1}{2}\sum_v \ld c_v,c_v \rd_v = 0$ by 
Theorem \ref{tateprop}(iii).
\end{proof}

\begin{rem}
Suppose $p = 2$ and $\bq = \{q_v : \text{$v$ of $K$}\}$ is a global \metabolic structure on $T$. 
If $c \in H^1(K,T)$ is such that $c_v \in \Hu(K_v,T)$ for every $v \notin \Sigma$,  
then for every $v$ we can define a new Tate quadratic form $q'_v$ on $H^1(K_v,T)$ by 
$$
q'_v(x) := q_v(x) + \ld x,c_v \rd_v.
$$
It is straightforward to check (using Theorem \ref{tateprop}) that $\bq' := \{q'_v\}$ is again 
a global \metabolic structure on $T$, and if $c \ne 0$ then $\bq' \ne \bq$.
\end{rem}

\begin{defn}
Suppose $v$ is a place of $K$ and $q_v$ is a quadratic form on $H^1(K_v,T)$.  
Let 
$$
\H(q_v) := \{\text{Lagrangian subspaces of $(H^1(K_v,T),q_v)$}\},
$$
and if $v \notin \Sigma$
$$
\H_\ram(q_v) := \{X \in \H(q_v) : X \cap \Hu(K_v,T) = 0\}.
$$
\end{defn}

\begin{lem}
\label{countlem}
Suppose $v \notin \Sigma$ and $q_v$ is a Tate quadratic form on $H^1(K_v,T)$.  
Let $d_v := \dim_{\Fp}T^{G_{K_v}}$.  
Then:
\begin{enumerate}
\item
$\dim_{\Fp}H^1(K_v,T) = 2d_v$, 
\item
every $X \in \H(q_v)$ has dimension $d_v$, 
\item
if $d_v > 0$ and $q_v$ is unramified, then 
$|\H_\ram(q_v)| = p^{d_v-1}$.
\end{enumerate}
\end{lem}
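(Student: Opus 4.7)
The plan is to handle (i) and (ii) together by linear algebra plus local Tate duality, and then spend most of the work on (iii).

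For (i), since $v \notin \Sigma$ the place $v$ is nonarchimedean with $v \nmid p$, so the local Euler characteristic formula reduces to $|H^0(K_v,T)| \cdot |H^2(K_v,T)| = |H^1(K_v,T)|$. The alternating pairing \eqref{weilpair} identifies $T$ with its Cartier dual $T^\vee := \Hom(T,\bmu_p)$ as $G_K$-modules (via $t \mapsto (s \mapsto t \wedge s)$), so local Tate duality gives $|H^2(K_v,T)| = |H^0(K_v,T^\vee)| = |H^0(K_v,T)|$. Combining, $|H^1(K_v,T)| = p^{2 d_v}$. For (ii), any Lagrangian $X$ satisfies $X = X^\perp$, and nondegeneracy of $(\;,\;)_{q_v}$ gives $2\dim X = \dim H^1(K_v,T) = 2 d_v$.

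For (iii), the first step is to observe that $L_0 := \Hu(K_v,T)$ is itself a Lagrangian subspace: it is its own orthogonal complement under $\ld\;,\;\rd_v$ by Theorem \ref{tateprop}(ii), and $q_v$ vanishes on it by the unramified hypothesis. Invoking the standard splitting result for metabolic spaces, I will pick a complementary Lagrangian $L_1$, giving $H^1(K_v,T) = L_0 \oplus L_1$ with each summand of dimension $d_v$. Every subspace $X$ transverse to $L_0$ of dimension $d_v$ is then the graph $X_\phi := \{\phi(y) + y : y \in L_1\}$ of a unique linear map $\phi \colon L_1 \to L_0$, and (iii) reduces to counting those $\phi$ for which $X_\phi$ is Lagrangian.

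Expanding using that $q_v$ and $\ld\;,\;\rd_v$ vanish on $L_0$ and on $L_1$, the Lagrangian conditions on $X_\phi$ become
\[
(\phi(y_1),y_2)_{q_v} + (\phi(y_2),y_1)_{q_v} = 0 \quad\text{and}\quad (\phi(y),y)_{q_v} = 0
\]
for all $y,y_1,y_2 \in L_1$. Under the identification $L_0 \cong L_1^\vee$ supplied by $\ld\;,\;\rd_v$, these say exactly that the bilinear form $B(y_1,y_2) := (\phi(y_1),y_2)_{q_v}$ on $L_1$ is strictly alternating (i.e.\ $B(y,y) = 0$), of which there are $p^{d_v(d_v - 1)/2}$ on a $d_v$-dimensional $\Fp$-space. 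Since $\dim_{\Fp} T = 2$ forces $d_v \leq 2$, this evaluates to $p^{d_v - 1}$ in the relevant cases $d_v = 1,2$.

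The main subtlety I expect is bookkeeping in characteristic $2$: the first displayed condition degenerates to mere symmetry of $B$ when $p = 2$, and the second condition ($B(y,y) = 0$) is a genuinely new constraint, whereas for odd $p$ it is automatic. Fortunately the final count $p^{d_v(d_v - 1)/2}$ is uniform in $p$, and the dimensional constraint $d_v \leq 2$ collapses it to the claimed $p^{d_v - 1}$ in every case.
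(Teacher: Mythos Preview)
Your proof is correct. For (i) and (ii), the paper takes a slightly different path: rather than invoking the local Euler characteristic formula and self-duality of $T$, it computes $\dim_{\Fp}\Hu(K_v,T)$ directly via the identification $\Hu(K_v,T) = T/(\Frob_v-1)T$ and the exact sequence $0 \to T^{G_{K_v}} \to T \to T \to T/(\Frob_v-1)T \to 0$, then uses that $\Hu(K_v,T)$ is Lagrangian to get $\dim H^1(K_v,T) = 2d_v$. Your Euler-characteristic argument is equally valid and perhaps cleaner; the paper's route has the mild advantage of establishing $\dim \Hu(K_v,T) = d_v$ explicitly along the way.

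For (iii), the paper simply cites Poonen--Rains \cite[Proposition 2.6(b,e)]{poonenrains}, whereas you unpack the argument: split $H^1(K_v,T) = L_0 \oplus L_1$ into complementary Lagrangians, parametrize transverse $d_v$-planes as graphs of $\phi:L_1 \to L_0$, and observe that the Lagrangian condition is exactly that the associated bilinear form $B(y_1,y_2) = (\phi(y_1),y_2)_{q_v}$ is strictly alternating, giving $p^{d_v(d_v-1)/2}$ such forms. Since $d_v \le \dim T = 2$, this collapses to $p^{d_v-1}$. This is precisely the Poonen--Rains computation made explicit, and your handling of the $p=2$ case (where ``skew-symmetric'' degenerates to ``symmetric'' and the diagonal condition $B(y,y)=0$ becomes genuinely extra) is correct. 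The one step you leave implicit is the existence of a Lagrangian complement $L_1$ to a given Lagrangian $L_0$ in a metabolic space; this is indeed standard (and also in Poonen--Rains), but if you want your argument to be fully self-contained you might sketch it: choose a dual basis $f_1,\ldots,f_{d_v}$ to a basis $e_1,\ldots,e_{d_v}$ of $L_0$, then correct the $f_j$ by elements of $L_0$ to kill $q_v(f_j)$ and the off-diagonal pairings $(f_i,f_j)_{q_v}$.
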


\begin{proof}
Since $\ld\;,\;\rd_v$ is nondegenerate, every Lagrangian 
subspace of $H^1(K_v,T)$ has dimension $\frac{1}{2}\dim_{\Fp}H^1(K_v,T)$.
Since $v \notin \Sigma$, Theorem \ref{tateprop}(ii) shows that 
$\Hu(K_v,T)$ is Lagrangian.  
We have 
$
\Hu(K_v,T) = T/(\Frob_v-1)T
$
(see for example \cite[\S XIII.1]{serrecg}), so the exact sequence 
$$0 \too T^{G_{K_v}} \too T \map{\Frob_v-1} T \too T/(\Frob_v-1)T \too 0$$
shows that $\dim_{\Fp}\Hu(K_v,T) = d_v$.  
This proves (i) and (ii).

Assertion (iii) follows from a calculation of Poonen and Rains 
\cite[Proposition 2.6(b,e)]{poonenrains}.
\end{proof}

\begin{defn}
\label{ssdef}
Suppose $T$ is as above and $\bq$ is a global \metabolic structure on $T$.
A {\em Selmer structure} $\cS$ for $(T,\bq)$ (or simply for $T$, if $\bq$ is understood) 
consists of 
\begin{itemize}
\item
a finite set $\Sigma_\cS$ of places of $K$, containing $\Sigma$,
\item
for every $v \in \Sigma_\cS$, a Lagrangian subspace $\HF(K_v,T) \subset H^1(K_v,T)$.
\end{itemize}
If $\cS$ is a Selmer structure, we set $\HF(K_v,T) := \Hu(K_v,T)$ if $v \notin \Sigma_\cS$, 
and we define the {\em Selmer group} $\HF(K,T) \subset H^1(K,T)$ 
by 
$$
\HF(K,T) := \ker (H^1(K,T) \too \dirsum{v}H^1(K_v,T)/\HF(K_v,T)),
$$
i.e., the subgroup of $c \in H^1(K,T)$ such that $c_v \in \HF(K_v,T)$ for every $v$.
\end{defn}

\begin{thm}
\label{compsel}
Suppose $\cS$ and $\cS'$ are two Selmer structures for $T$.  Then
\begin{multline*}
\dim_{\Fp}\HF(K,T) - \dim_{\Fp}H^1_{\cS'}(K,T)  \\
   \equiv \sum_{v \in \Sigma_\cS \cup \Sigma_{\cS'}} 
      \dim_{\Fp}\HF(K_v,T)/(\HF(K_v,T)\cap H^1_{\cS'}(K_v,T)) \pmod{2}.
\end{multline*}
\end{thm}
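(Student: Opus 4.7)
The plan is to realize both Selmer groups as intersections of Lagrangian subspaces in a common metabolic space and then apply Proposition \ref{ell2}. Set $\Sigma_0 := \Sigma_\cS \cup \Sigma_{\cS'}$ (a finite set containing $\Sigma$), and form the metabolic space
$$V := \bigoplus_{v \in \Sigma_0} H^1(K_v, T), \qquad q := \sum_{v \in \Sigma_0} q_v,$$
a direct sum of metabolic spaces by Definition \ref{ahsdef}(i). Consider three subspaces of $V$:  $X := \bigoplus_{v \in \Sigma_0} \HF(K_v,T)$ and $Y := \bigoplus_{v \in \Sigma_0} H^1_{\cS'}(K_v,T)$ (with the convention $\HF(K_v,T) := \Hu(K_v,T)$ at $v \in \Sigma_0 \setminus \Sigma_\cS$, which is Lagrangian by Theorem \ref{tateprop}(ii) and Definition \ref{ahsdef}(ii), and similarly for $H^1_{\cS'}$); and $Z := \loc_{\Sigma_0}\bigl(H^1_{\Sigma_0}(K,T)\bigr)$, where $H^1_{\Sigma_0}(K,T) := \{c \in H^1(K,T) : c_v \in \Hu(K_v,T)\text{ for all } v \notin \Sigma_0\}$ and $\loc_{\Sigma_0}$ is the product of localizations at the places in $\Sigma_0$. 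By construction $X$ and $Y$ are Lagrangian in $V$.

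The crucial step is to show $Z$ is also Lagrangian. Total isotropy follows from Theorem \ref{tateprop}(iii) (for the bilinear form) and Definition \ref{ahsdef}(iii) (for the quadratic form), using that the contributions from $v \notin \Sigma_0$ vanish because $c_v \in \Hu(K_v,T) = \Hu(K_v,T)^\perp$ and $q_v$ is unramified there. To prove $\dim Z = \tfrac{1}{2}\dim V$, I would pass to the restricted direct product $V_\infty := \prod'_v H^1(K_v,T)$ equipped with the finite sums $\sum_v \ld\,,\,\rd_v$ and $\sum_v q_v$. By Poitou--Tate global duality, combined with the self-duality of $T$ furnished by \eqref{weilpair}, the image $L$ of the localization map $H^1(K,T) \to V_\infty$ is its own annihilator under the bilinear form, and it is totally isotropic for the quadratic form by Definition \ref{ahsdef}(iii). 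The subspace $U := \bigoplus_{v \notin \Sigma_0}\Hu(K_v,T) \subseteq V_\infty$ is totally isotropic, and $U^\perp/U$ is canonically identified with $V$ as a metabolic space; applying the Lagrangian-reduction principle of Lemma \ref{ell0} to the pair $(L, U)$ and descending to $U^\perp/U$ shows that the image of $L \cap U^\perp$ in $V$ is Lagrangian, and this image is precisely $Z$.

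Granted that $Z$ is Lagrangian, Proposition \ref{ell2} yields
$$\dim(X \cap Z) - \dim(Y \cap Z) \equiv \dim X - \dim(X \cap Y) = \dim X/(X \cap Y) \pmod 2.$$
Tautologically $\HF(K,T) = \{c \in H^1_{\Sigma_0}(K,T) : \loc_{\Sigma_0}(c) \in X\}$, and likewise $H^1_{\cS'}(K,T) = \{c \in H^1_{\Sigma_0}(K,T) : \loc_{\Sigma_0}(c) \in Y\}$; both differ from $X \cap Z$ and $Y \cap Z$ by the same kernel $\ker(\loc_{\Sigma_0}|_{H^1_{\Sigma_0}(K,T)})$, so $\dim(X \cap Z) - \dim(Y \cap Z) = \dim\HF(K,T) - \dim H^1_{\cS'}(K,T)$. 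Furthermore $\dim X/(X \cap Y) = \sum_{v \in \Sigma_0}\dim \HF(K_v,T)/(\HF(K_v,T) \cap H^1_{\cS'}(K_v,T))$, and the summands for $v \in \Sigma_0 \setminus (\Sigma_\cS \cup \Sigma_{\cS'})$ vanish since both local conditions there equal $\Hu(K_v,T)$. Combining yields the theorem. The main obstacle is the global duality input making $Z$ Lagrangian: one must fuse Poitou--Tate duality (for the bilinear form, via self-duality of $T$) with Definition \ref{ahsdef}(iii) (for the quadratic form), and carry out the Lagrangian reduction against $U$ carefully in the restricted direct product.
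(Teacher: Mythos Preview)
Your proof is correct and follows essentially the same approach as the paper: form the metabolic space $V = \bigoplus_{v \in \Sigma_0} H^1(K_v,T)$, take the three Lagrangian subspaces $X$, $Y$, $Z$ exactly as you do, and apply Proposition~\ref{ell2}. The only difference is in the justification that $Z$ is Lagrangian: the paper invokes Poitou--Tate duality directly at the finite level (citing \cite[Theorem I.4.10]{milne}, \cite[Theorem 3.1]{tate}, \cite[Theorem 1.7.3]{rubinES}) to get $Z = Z^\perp$ inside $V$, then uses Definition~\ref{ahsdef}(iii) for isotropy, whereas you pass to the restricted product $V_\infty$ and reduce back via Lemma~\ref{ell0}. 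Your route works but is slightly more circuitous, and note that Lemma~\ref{ell0} is stated for finite-dimensional spaces, so strictly speaking you would need to observe that the same argument goes through in the restricted product (or work in a large enough finite truncation); the paper's direct citation avoids this. Also, your remark that summands for $v \in \Sigma_0 \setminus (\Sigma_\cS \cup \Sigma_{\cS'})$ vanish is vacuous since $\Sigma_0 = \Sigma_\cS \cup \Sigma_{\cS'}$.
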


\begin{proof}
When $p > 2$, this is \cite[Theorem 1.4]{MRvisibility}.  
We will prove this for all $p$ using Proposition \ref{ell2}.

Let $\Sigma' := \Sigma_\cS \cup \Sigma_{\cS'}$. 
Define $V = \prod_{v \in \Sigma'}H^1(K_v,T)$, 
so $(V,\sum_{v}q_v)$ is a \metabolic space.  
Let $\loc_{\Sigma'} : H^1(K,T) \to V$ denote the product of the localization maps.
Define three subspaces of $V$
\begin{itemize}
\item
$X := \prod_{v \in \Sigma'} \HF(K_v,T)$,
\item
$Y := \prod_{v \in \Sigma'} H^1_{\cS'}(K_v,T)$,
\item
$Z$ is the image under $\loc_{\Sigma'}$ of 
$\ker (H^1(K,T) \to \hskip -2pt\dirsum{v \notin \Sigma'}H^1(K_v,T)/H^1_{\ur}(K_v,T))$.
\end{itemize}
The spaces $X$ and $Y$ are Lagrangian by definition of Selmer structure.  
That $Z$ is also Lagrangian can be seen as follows.  We have $Z^\perp = Z$ by Poitou-Tate 
global duality (see for example \cite[Theorem I.4.10]{milne}, \cite[Theorem 3.1]{tate}, 
or \cite[Theorem 1.7.3]{rubinES}).  If $z \in Z$, then $z = \loc_{\Sigma'}(s)$ with 
$s \in H^1(K,T)$ satisfying $s_v \in H^1_\ur(K_v,T)$ for every $v \notin \Sigma'$.  
Then $q_v(s_v) = 0$ if $v \notin \Sigma'$ by Definition \ref{ahsdef}(ii), so
$$
\bigl(\sum_{v \in \Sigma'}q_v\bigr)(z) = \sum_{v \in \Sigma'}q_v(s_v) = \sum_{\text{all $v$}}q_v(s_v) = 0
$$
by Definition \ref{ahsdef}(iii).  Thus $Z$ is Lagrangian.

Note that from the definitions we have exact sequences
$$
\xymatrix@R=10pt{
0 \ar[r] & A \ar[r] & \HF(K,T) \ar^-{\loc_{\Sigma'}}[r] & X \cap Z \ar[r] & 0\\
0 \ar[r] & A \ar[r] & H^1_{\cS'}(K,T) \ar^-{\loc_{\Sigma'}}[r] & Y \cap Z \ar[r] & 0\\
}
$$
where the kernel $A$ in both sequences is 
$$
A = \ker \bigl(H^1(K,T) \too \dirsum{v \notin \Sigma'}H^1(K_v,T)/H^1_{\ur}(K_v,T)  \dirsum{v \in \Sigma'}H^1(K_v,T)\bigr)
$$
Thus by Proposition \ref{ell2} we have
\begin{align*}
\dim_{\Fp}\HF(K,T) - \dim_{\Fp}H^1_{\cS'}(K,T) 
   &= \dim_{\Fp}(Y \cap Z) - \dim_{\Fp}(X \cap Z) \\
   &\equiv \dim_{\Fp}(X/(X\cap Y)) \pmod{2}.
\end{align*}
Since $X/(X\cap Y) = \prod_{v \in \Sigma'}\HF(K_v,T)/(\HF(K_v,T)\cap H^1_{\cS'}(K_v,T))$, this 
completes the proof of the theorem.
\end{proof}

\section{Twisted Selmer groups}
\label{twistsec}

Given $T$ as above (and some additional ``twisting data'', see Definition \ref{twistdata}), 
in this section we show 
how to attach to every character $\chi \in \Hom(G_K,\bmu_p)$ a Selmer group $\Sel(T,\chi)$.
More generally, we attach a Selmer group $\Sel(T,\gamma)$ to every collection of local characters 
$\gamma = (\gamma_v)$ with $\gamma_v \in \Hom(G_{K_v}, \bmu_p)$ for $v$ in some finite set 
containing $\Sigma$.  Our main result is Theorem \ref{kramer}, which uses Theorem \ref{compsel} 
to show how the parity of the Selmer rank changes when we change some of the $\gamma_v$.

\begin{defn}
\label{setdef}
If $L$ is a field, define
$$
\Xset(L) := \Hom(G_L,\bmu_p)
$$
(throughout this paper, ``$\Hom$'' will always mean continuous homomorphisms).
If $L$ is a local field, we let $\Xset_\ram(L) \subset \Xset(L)$ denote the 
subset of ramified characters.  In this case local class field theory identifies 
$\Xset(L)$ with $\Hom(L^\times,\bmu_p)$, and $\Xset_\ram(L)$ is then the subset of 
characters nontrivial on the local units $\O_L^\times$.
Let $\one_L \in \Xset(L)$ denote the trivial character.

There is a natural action of $\Aut(\bmu_p) = \Fp^\times$ on $\Xset(L)$, and we let 
$\Fset(L) := \Xset(L)/\Aut(\bmu_p)$.  Then $\Fset(L)$ is naturally identified with 
the set of cyclic extensions of $L$ of degree dividing $p$, via the correspondence 
that sends $\chi \in \Xset(L)$ to the fixed field $\bar{L}^{\ker(\chi)}$ of $\ker(\chi)$ 
in $\bar{L}$.  If $L$ is a local field, then $\Fset_\ram(L)$ denotes the set of ramified 
extensions in $\Fset(L)$.
\end{defn}
  
\begin{defn}
For $1 \le i \le 2$ define
$$
\cP_i := \{\l : \text{$\l \notin \Sigma$, $\bmu_p \subset K_\l$, 
   and $\dim_{\Fp}T^{G_{K_\l}} = i$}\},
$$
and $\cP_0 := \{\l : \l \notin \Sigma \cup \cP_1 \cup \cP_2\}$.  
Define the {\em width} $w(\l) \in \{0,1,2\}$ of a prime $\l$ of $K$, $\l \notin \Sigma$, by
$w(\l) := i$ if $\l\in\cP_i$.
\end{defn}

Let $K(T)$ denote the field of definition of the elements of $T$, i.e., 
the fixed field in $\Kb$ of $\ker (G_K \to \Aut(T))$.

\begin{lem}
\label{4.2}
Suppose $\l$ is a prime of $K$, $\l \notin \Sigma$, and let $\Frob_\l \in \Gal(K(T)/K)$ 
be a Frobenius element for some choice of prime above $\l$.  Then
\begin{enumerate}
\item
$\l\in\cP_2$ if and only if $\Frob_\l = 1$,
\item
$\l\in\cP_1$ if and only if $\Frob_\l$ has order exactly $p$,
\item
$\l\in\cP_0$ if and only if $\Frob_\l^p \ne 1$.
\end{enumerate}
In particular $\cP_2$ has positive density in the set of all primes of $K$, 
and $\cP_1$ has positive density if and only if $p \mid [K(T):K]$.
\end{lem}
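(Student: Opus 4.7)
The plan is to reduce everything to linear algebra about how $\Frob_\l$ acts on the two-dimensional $\Fp$-vector space $T$. Since $\l \notin \Sigma$, the Galois module $T$ is unramified at $\l$, so the action of $G_{K_\l}$ on $T$ factors through $\Gal(K_\l^\ur/K_\l)$, which is topologically generated by $\Frob_\l$; consequently $T^{G_{K_\l}}$ is the fixed subspace of $\Frob_\l$ acting on $T$. Moreover, since $\bmu_p$ is likewise unramified at $\l$, the pairing \eqref{weilpair} identifies $\det(\Frob_\l|_T) \in \Fp^\times$ with the scalar by which $\Frob_\l$ acts on $\bmu_p$; in particular $\bmu_p \subset K_\l$ is equivalent to $\det(\Frob_\l|_T) = 1$.

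With these two observations I run a case analysis on the image of $\Frob_\l$ in $\GL(T) \cong \GL_2(\Fp)$. If $\Frob_\l = 1$ then $\dim T^{G_{K_\l}} = 2$ and $\det = 1$, putting $\l \in \cP_2$. If $\Frob_\l$ has order exactly $p$, then $(\Frob_\l - 1)^p = 0$ in $\End(T)$ by Frobenius; a nilpotent endomorphism of a two-dimensional space must already square to zero by Cayley--Hamilton, so $\Frob_\l$ is conjugate to $\bigl(\begin{smallmatrix}1 & 1 \\ 0 & 1 \end{smallmatrix}\bigr)$. Hence $\dim T^{G_{K_\l}} = 1$ and $\det = 1$, putting $\l \in \cP_1$. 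Conversely, $\l \in \cP_2$ forces $\Frob_\l = 1$; and if $\l \in \cP_1$ then $1$ is an eigenvalue of $\Frob_\l$ with geometric multiplicity $1$, while $\det(\Frob_\l) = 1$ forces both eigenvalues to equal $1$, so $\Frob_\l$ is a nontrivial unipotent matrix and has order exactly $p$. Part (iii) is then immediate from the definition of $\cP_0$ as the complement of $\Sigma \cup \cP_1 \cup \cP_2$.

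For the density statements, $\Gal(K(T)/K)$ embeds in $\GL(T) \cong \GL_2(\Fp)$ by the very definition of $K(T)$, and the Chebotarev density theorem gives density $|C|/[K(T):K]$ for the primes whose Frobenius lies in a conjugacy class $C \subset \Gal(K(T)/K)$. Taking $C = \{1\}$ shows that $\cP_2$ has positive density $1/[K(T):K]$, and $\cP_1$ has positive density iff $\Gal(K(T)/K)$ contains an element of order $p$, which by Cauchy's theorem is equivalent to $p \mid [K(T):K]$. There is no serious obstacle here; the only mild subtlety is the elementary observation that any nontrivial $p$-power-order element of $\GL_2(\Fp)$ is unipotent, which is what cleanly separates the $\cP_1$ case from $\cP_0$.
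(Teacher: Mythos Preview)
Your proof is correct and follows essentially the same approach as the paper: reduce to the action of $\Frob_\l$ on $T$ as an element of $\GL_2(\Fp)$, use the pairing \eqref{weilpair} to identify $\bmu_p \subset K_\l$ with $\det(\Frob_\l)=1$, and then sort out the cases by elementary linear algebra. You are more explicit than the paper about why order-$p$ elements of $\GL_2(\Fp)$ are unipotent and about the Chebotarev/Cauchy step for the density claims, but the argument is the same.
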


\begin{proof}
Fix an $\Fp$-basis of $T$ so that we can view $\Frob_\l \in \GL_2(\Fp)$.  
Then by \eqref{weilpair}
$$
\bmu_p \subset K_\l \iff \text{$\Frob_\l$ acts trivially on $\bmu_p$} 
   \iff \det(\Frob_\l) = 1.
$$
Since $\l \notin \Sigma$, $T$ is unramified at $\l$, so $T^{G_{K_\l}} = T^{\Frob_\l = 1}$, 
the subspace of $T$ fixed by $\Frob_\l$. 
We have $\dim_{\Fp}T^{\Frob_\l = 1} = 2$ if and only if 
$\Frob_\l = 1$, and if $\det(\Frob_\l) = 1$, then 
$\dim_{\Fp}T^{\Frob_\l = 1} = 1$ if and only if $\Frob_\l$ has order $p$.
This proves the lemma.
\end{proof}

\begin{defn}
\label{twistdata}
Suppose $T$, $\Sigma$ are as above, and $\bq$ is a global \metabolic structure on $T$.
By {\em twisting data} we mean
\begin{enumerate}
\item
for every $v \in \Sigma$, a (set) map 
$$
\alpha_v : \Xset(K_v)/\Aut(\bmu_p) = \Fset(K_v) \too \H(q_v),
$$
\item
for every $v \in \cP_2$, a bijection 
$$
\alpha_v : \Xset_\ram(K_v)/\Aut(\bmu_p) = \Fset_\ram(K_v) \too \H_\ram(q_v).
$$
\end{enumerate}
\end{defn}

\begin{rem}
Note that if $v \in \cP_2$ then 
$
|\Fset_\ram(K_v)| = p = |\H_\ram(q_v)|,
$
the first equality by local class field theory 
(since by definition $v\nmid p$ and $\bmu_p \subset K_v^\times$) 
and the second by Lemma \ref{countlem}(iii).

On the other hand,
if $v \in \cP_1$ then $\H_\ram(q_v)$ has exactly one element by Lemma \ref{countlem}(iii),  
and if $v \in \cP_0$ then either $H^1(K_v,T) = 0$ by Lemma \ref{countlem}(i), or 
$\bmu_p \not\subset K_v$ so $\Xset_\ram(K_v)$ is empty.  Thus if $v \in \cP_0 \cup \cP_1$ then 
there is a unique map $\Xset_\ram(K_v) \to \H_\ram(q_v)$.  That is why these maps do not need 
to be specified as part of the twisting data.
\end{rem}

If $\chi \in \Xset(K)$ and $v$ is a place of $K$, we let $\chi_v \in \Xset(K_v)$ 
denote the restriction of $\chi$ to $G_{K_v}$.

\begin{defn}
\label{ddef}
Let
$$
\D := \{\text{squarefree products of primes $\l \in \cP_1 \cup \cP_2$}\},
$$
and if $\d\in\D$ let $\d_1$ (resp., $\d_2$) be the product of all primes 
dividing $\d$ that lie in $\cP_1$ (resp.,  $\cP_2$), so $\d = \d_1\d_2$.
For every $\d \in \D$, define the {\em width} of $\d$ by 
$$
w(\d) := \sum_{\l\mid\d} w(\l) = |\{\l : \l\mid\d_1\}| + 2\cdot|\{\l : \l\mid\d_2\}|.
$$
Let $\Sigma(\d) := \Sigma \cup \{\l : \l \mid \d\} \subset \Sigma \cup \cP_1 \cup \cP_2$ and
\begin{multline*}
\Xset(\d) := \{\chi \in \Xset(K) : 
 \text{$\chi$ is ramified at all $\l$ dividing $\d$,} \\
    \text{and unramified outside of $\Sigma(\d) \cup \cP_0$}\}.
\end{multline*}  
Define a finite set
$$
\Gamma_\d := \prod_{v \in \Sigma}\Xset(K_v) \;\times 
   \prod_{\l \mid \d_2}\Xset_\ram(K_\l),
$$
and let $\eta_\d : \Xset(\d) \to \Gamma_\d$ and $\eta : \Xset(K) \to \Gamma_1$
denote the natural maps
$$
\eta_\d(\chi) := (\ldots,\chi_v,\ldots)_{v \in \Sigma(\d_2)}, \quad \eta(\chi) := (\ldots,\chi_v,\ldots)_{v \in \Sigma}.
$$
\end{defn}
  
Note that $\Xset(K_v)$ is a group, and $\Xset_\ram(K_v)$ is not a group but 
it is closed under multiplication by unramified characters.
Since $\Xset(\d)$ is the fiber over $\d$ of the map $\Xset(K) \to \D$ that sends $\chi$ to 
the part of its conductor supported on $\cP_1 \cup \cP_2$, we have 
 $\Xset(K) = \coprod_{\d\in\D} \Xset(\d)$.

\begin{defn}
\label{sstwist}
Given $T$, $\bq$, and twisting data as in Definition \ref{twistdata}, we define 
a Selmer structure $\cS(\gamma)$ for every $\d\in\D$ and 
$\gamma = (\gamma_v)_v \in\Gamma_\d$ as follows.
\begin{itemize}
\item
Let $\Sigma_{\cS(\gamma)} := \Sigma(\d)$.
\item
If $v \in \Sigma$ then let $H^1_{\cS(\gamma)}(K_v,T) := \alpha_v(\gamma_v)$.
\item
If $v \mid \d_1$, let $H^1_{\cS(\gamma)}(K_v,T)$ be the unique element of $\H_\ram(q_v)$.
\item
If $v \mid \d_2$, let $H^1_{\cS(\gamma)}(K_v,T) := \alpha_v(\gamma_v) \in \H_\ram(q_v)$.
\end{itemize}
If $\gamma\in\Gamma_\d$ we will also write $\Sel(T,\gamma) := H^1_{\cS(\gamma)}(K,T)$, and
if $\chi \in \Xset(\d)$ then we define
$$
\Sel(T,\chi) := \Sel(T,\eta_\d(\chi)).
$$
\end{defn}

\begin{rem}
\label{3.7}
It is clear from the definition that $\Sel(T,\chi)$ depends only on the 
extension of $K$ cut out by $\chi$, i.e., $\Sel(T,\chi) = \Sel(T,\chi^i)$ 
for all $i \in \Fp^\times$.  However, when we later count the twists $\Sel(T,\chi)$ 
with certain properties, it will be convenient to deal with $\Xset(K)$ 
rather than $\Fset(K)$ because $\Xset(K)$ is a group.  In any case 
the natural map $\Xset(K) \onto \Xset(K)/\Aut(\bmu_p) = \Fset(K)$ 
is $(p-1)$-to-one except for the single fiber consisting of the trivial character, 
so it is simple to go from counting results for $\Xset(K)$ to results for $\Fset(K)$. 
In particular, when $p = 2$ the natural map $\Xset(K) \to \Fset(K)$ 
is a bijection.
\end{rem}

\begin{rem}[Remarks about twisting data]
Our definition of twisting data is designed to ensure that for $v \notin \Sigma$, 
all subspaces $V \in \H_\ram(q_v)$ occur with equal frequency as we run 
over characters $\chi \in \Xset(K)$ that are ramified at $v$.  That fact is all we require 
to prove our results in sections \ref{pd2} and \ref{pdp}
about the rank statistics of $\Sel(T,\chi)$.  
In particular, the conclusions of Theorems \ref{sllem,p=2} and \ref{sllem} 
below do not depend on the choice of twisting data for $(T,\bq)$.

We will see in \S\ref{eex} that when $E$ is an elliptic curve over $K$, $p$ is 
a rational prime, and $T = E[p]$, then there is natural global \metabolic 
structure on $E[p]$ and natural twisting data such that for every $\chi \in \Xset(K)$, 
$\Sel(E[p],\chi)$ is a classical Selmer group of a twist of $E$ 
(an abelian variety twist, when $p>2$).  
An analogous statement should hold for more general (self-dual) motives and their 
Bloch-Kato $p$-Selmer groups, so our results below should also apply to  
Bloch-Kato Selmer groups in families of twists.

\end{rem}

\begin{defn}
\label{comp}
If $v \in \Sigma$ and $\psi, \psi' \in \Xset(K_v)$, define
$$
\com{\psi}{\psi'} := \dim_{\Fp}\alpha_v(\psi)/(\alpha_v(\psi)\cap\alpha_v(\psi')).
$$
\end{defn}

\begin{thm}
\label{kramer}
Suppose $\d \in \D$, $\gamma \in \Gamma_1$, and $\gamma' \in \Gamma_\d$.  Then
$$
\dim_{\F_p}\Sel(T,\gamma) - \dim_{\F_p}\Sel(T,\gamma') \equiv 
   \sum_{v \in \Sigma}\com{\gamma_v}{\gamma'_v} + w(\d) \pmod{2}.
$$
\end{thm}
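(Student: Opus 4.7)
The plan is to apply Theorem \ref{compsel} directly to the two Selmer structures $\cS := \cS(\gamma)$ and $\cS' := \cS(\gamma')$, and then match the resulting local contributions to the two terms on the right-hand side of the desired congruence.

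First I would identify $\Sigma_{\cS} \cup \Sigma_{\cS'}$. By Definition \ref{sstwist} we have $\Sigma_{\cS(\gamma)} = \Sigma(1) = \Sigma$ and $\Sigma_{\cS(\gamma')} = \Sigma(\d) = \Sigma \cup \{\l : \l \mid \d\}$, so the union is $\Sigma(\d)$. Thus Theorem \ref{compsel} gives
\begin{equation*}
\dim_{\Fp}\Sel(T,\gamma) - \dim_{\Fp}\Sel(T,\gamma')
\equiv \sum_{v \in \Sigma} A_v + \sum_{\l \mid \d} A_\l \pmod 2,
\end{equation*}
where $A_v := \dim_{\Fp}\HF(K_v,T)/(\HF(K_v,T)\cap H^1_{\cS'}(K_v,T))$.

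Next I would compute each $A_v$ separately. For $v \in \Sigma$, both local conditions are given by the twisting map $\alpha_v$, so $\HF(K_v,T) = \alpha_v(\gamma_v)$ and $H^1_{\cS'}(K_v,T) = \alpha_v(\gamma'_v)$, yielding $A_v = \com{\gamma_v}{\gamma'_v}$ directly from Definition \ref{comp}. For a prime $\l \mid \d$, observe that $\l \notin \Sigma_\cS$, so $\HF(K_\l,T) = \Hu(K_\l,T)$, while $H^1_{\cS'}(K_\l,T)$ is by construction an element of $\H_\ram(q_\l)$ (either the unique such element when $\l \mid \d_1$, or $\alpha_\l(\gamma'_\l)$ when $\l \mid \d_2$). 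By the very definition of $\H_\ram(q_\l)$ these two Lagrangians intersect trivially, so $A_\l = \dim_{\Fp}\Hu(K_\l,T) = d_\l$ by Lemma \ref{countlem}(ii). Since $d_\l = w(\l)$ by definition of $\cP_1$ and $\cP_2$, summing gives $\sum_{\l \mid \d} A_\l = \sum_{\l \mid \d} w(\l) = w(\d)$, and the theorem follows.

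The proof is essentially a bookkeeping exercise once Theorem \ref{compsel} is invoked; the only non-trivial input is the observation that a ramified Lagrangian at a prime outside $\Sigma$ meets the unramified Lagrangian trivially, which is built into the definition of $\H_\ram(q_v)$. I do not anticipate any real obstacle — the main thing to be careful about is correctly distinguishing the roles of $\d_1$ and $\d_2$ at primes in $\cP_1$ versus $\cP_2$, and confirming that the contribution at such primes is $d_\l$ in both cases so that it collapses cleanly to the single invariant $w(\l)$.
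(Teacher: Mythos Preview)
Your proposal is correct and follows essentially the same approach as the paper's own proof: apply Theorem \ref{compsel} to $\cS(\gamma)$ and $\cS(\gamma')$, identify the local contribution at $v \in \Sigma$ as $\com{\gamma_v}{\gamma'_v}$ via Definition \ref{comp}, and at $\l \mid \d$ use that the unramified and ramified Lagrangians intersect trivially so the contribution is $\dim_{\Fp}\Hu(K_\l,T) = w(\l)$ by Lemma \ref{countlem}(ii). The paper's proof is virtually identical in structure and detail.
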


\begin{proof}
We will deduce this from Theorem \ref{compsel}.  
Suppose $\l\mid\d$.  Then by definition $H^1_{\cS(\gamma)}(K_\l,T) = \Hu(K_\l,T)$ 
since $\l\notin\Sigma_{\cS(\gamma)}$, 
and $H^1_{\cS(\gamma')}(K_\l,T) \in \H_\ram(q_\l)$, so 
$$
H^1_{\cS(\gamma)}(K_\l,T) \cap H^1_{\cS(\gamma')}(K_\l,T) = 0.
$$
Also by definition we have $H^1_{\cS(\gamma)}(K_v,T) = \alpha_v(\gamma_v)$ if $v \in \Sigma$, 
and similarly for $\gamma'$.
Now applying Theorem \ref{compsel} with $\cS = \cS(\gamma)$ and $\cS' = \cS(\gamma')$ shows that
\begin{align*}
\dim_{\F_p}\Sel(T,&\gamma) - \dim_{\F_p}\Sel(T,\gamma') \\
  &\equiv \sum_{v \in \Sigma} \dim_{\Fp}\alpha_v(\gamma_v)/(\alpha_v(\gamma_v)\cap\alpha_v(\gamma_v')) + 
     \sum_{\l\mid\d}\dim_{\Fp}(\Hu(K_\l,T)) \\
  &= \sum_{v \in \Sigma}\com{\gamma_v}{\gamma'_v} + \sum_{\l\mid\d} w(\l) \pmod{2},
\end{align*}
using that if $\l\in\cP_1 \cup \cP_2$ then $\dim_{\Fp}\Hu(K_\l,T) = w(\l)$ 
by Lemma \ref{countlem}(ii).  This proves the theorem.
\end{proof}

\begin{cor}
\label{kramercor}
Suppose $\d\in\D$ and $\chi\in\Xset(\d)$.  Then 
$$
\dim_{\Fp}\Sel(T,\chi) \equiv \dim_{\Fp}\Sel(T,\eta(\chi)) 
   + w(\d) \pmod{2}.
$$
\end{cor}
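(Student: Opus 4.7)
The plan is to derive the corollary as a direct specialization of Theorem \ref{kramer}. The two Selmer groups being compared both come from the same global character $\chi$: by Definition \ref{sstwist}, $\Sel(T,\chi) = \Sel(T, \eta_\d(\chi))$, while $\Sel(T,\eta(\chi))$ uses only the data at places in $\Sigma$. So I set $\gamma' := \eta_\d(\chi) \in \Gamma_\d$ and $\gamma := \eta(\chi) \in \Gamma_1$ and feed these into Theorem \ref{kramer}.

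The key observation is that $\gamma$ and $\gamma'$ agree on the overlap of their index sets: for every $v \in \Sigma$, both $\gamma_v$ and $\gamma'_v$ are equal to $\chi_v$, since $\Sigma \subset \Sigma(\d_2)$ and $\eta$, $\eta_\d$ are just the natural restriction maps. Consequently $\alpha_v(\gamma_v) = \alpha_v(\gamma_v')$ for each $v \in \Sigma$, and by Definition \ref{comp} the local contribution satisfies $\com{\gamma_v}{\gamma_v'} = 0$ for every $v \in \Sigma$.

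Therefore the sum $\sum_{v \in \Sigma}\com{\gamma_v}{\gamma_v'}$ appearing in Theorem \ref{kramer} vanishes, and the conclusion of that theorem collapses to
$$
\dim_{\Fp}\Sel(T,\eta(\chi)) - \dim_{\Fp}\Sel(T,\eta_\d(\chi)) \equiv w(\d) \pmod{2}.
$$
Rearranging and invoking $\Sel(T,\chi) = \Sel(T,\eta_\d(\chi))$ gives exactly the claimed congruence. There is no real obstacle here: the work has been done in Theorem \ref{kramer}, and the corollary is simply the case where the two global Selmer structures are built from a single global character, so that no mismatch at $\Sigma$ contributes. The only small point to verify is the set-theoretic compatibility $\Sigma \subset \Sigma(\d_2) = \Sigma \cup \{\l : \l \mid \d_2\}$, which is immediate from Definition \ref{ddef}.
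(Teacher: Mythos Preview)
Your proof is correct and essentially identical to the paper's own argument: set $\gamma = \eta(\chi)$, $\gamma' = \eta_\d(\chi)$, observe that $\gamma_v = \chi_v = \gamma'_v$ for $v \in \Sigma$ so each $\com{\gamma_v}{\gamma'_v} = 0$, and apply Theorem~\ref{kramer}.
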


\begin{proof}
Let $\gamma = \eta(\chi)$ and $\gamma' = \eta_\d(\chi)$.
If $v \in \Sigma$ then $\eta(\chi)_v = \chi_v = \eta_\d(\chi)_v$, so $\com{\gamma_v}{\gamma'_v} = 0$.
Now the corollary follows from Theorem \ref{kramer}.
\end{proof}

\section{Example: twists of elliptic curves}
\label{eex}

Fix for this section an elliptic curve $E$ defined over $K$, a prime $p$, 
and let $T := E[p]$.  We will show that this $T$ 
comes equipped with the extra structure that we require, and that with an appropriate 
choice of twisting data, the Selmer groups $\Sel(E[p],\chi)$ are classical $p$-Selmer groups 
of twists of $E$.

The module $T = E[p]$ satisfies the hypotheses of \S\ref{notation}, with the pairing 
\eqref{weilpair} given by the Weil pairing. Let $\Sigma$ be a finite set of places 
of $K$ containing all archimedean places, all places above $p$, and all primes where 
$E$ has bad reduction.
Let $\O$ denote the ring of integers of the cyclotomic field of $p$-th roots of unity, 
and $\p$ the (unique) prime of $\O$ above $p$.

If $p > 2$, there is a unique global \metabolic structure $\bq_E = (q_{E,v})$ on $E[p]$ (Lemma \ref{2.4}).  
For general $p$, there is a canonical 
global \metabolic structure $\bq_E$ on $E[p]$ constructed from the Heisenberg group, 
see \cite[\S4]{poonenrains}.  We recall this construction below when $p=2$, in the proof of 
Lemma \ref{sameT}(ii).

We next define twisting data for $(E[p],\Sigma,\bq_E)$ in the sense of Definition \ref{twistdata}.

\begin{defn}
\label{etwist}
Suppose $\chi\in\Xset(K)$ (or $\chi \in \Xset(K_v)$) is nontrivial.  
If $p=2$ we let $E^\chi$ denote the quadratic twist of $E$ by $\chi$.
For general $p$, let $F$ denote the cyclic
extension of $K$ (resp., $K_v$) of degree $p$ corresponding to $\chi$, 
and let $E^\chi$ denote the abelian variety 
denoted $E_F$ in \cite[Definition 5.1]{mrs}.  

Concretely, if $\chi \in \Xset(K)$ and $\chi\ne \one_K$ then $E^\chi$ is an abelian variety of 
dimension $p-1$ over $K$, defined to be the kernel of the canonical map
$$
\Res^F_K(E) \too E
$$
where $\Res^F_K(E)$ denotes the Weil restriction of scalars of $E$ from $F$ to $K$.
The character $\chi$ induces an inclusion $\O \subset \End_K(E^\chi)$ (see \cite[Theorem 5.5(iv)]{mrs}).  

For $\chi \in \Xset(K)$, let $\bq_{E^\chi} = (q_{E^\chi,v})$ be the unique global \metabolic structure 
on $E^\chi[\p]$ if $p>2$, and if $p=2$ 
we let $\bq_{E^\chi}$ be the canonical global \metabolic structure on the elliptic curve $E^\chi$.

\end{defn}

If $p=2$, then the two definitions above of $E^\chi$ agree, with $\O = \Z$, and $\p = 2$.

\begin{lem}
\label{sameT}
\begin{enumerate}
\item
There is a canonical $G_K$-isomorphism $E^\chi[\p] \cong E[p]$.
\item
The isomorphism of (i) identifies $q_{E^\chi,v}$ with $q_{E,v}$ for every $v$ 
and every $\chi\in\Xset(K_v)$.
\end{enumerate}
\end{lem}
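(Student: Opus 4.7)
My plan is to prove (i) and (ii) in turn, separating $p=2$ from $p>2$.

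For (i), when $p=2$ the twist $E^\chi$ is obtained from $E$ via the cocycle $\sigma \mapsto \chi(\sigma) \in \{\pm 1\} = \Aut(E)$. Since $-P = P$ for every $P \in E[2]$, this cocycle acts trivially on $2$-torsion, yielding a canonical $G_K$-equivariant isomorphism $E^\chi[2] \cong E[2]$ that preserves the Weil pairing. For $p>2$, the inclusion $\O \hookto \End_K(E^\chi)$ provided by \cite[Theorem 5.5(iv)]{mrs} makes $E^\chi[\p]$ into a $2$-dimensional $\F_p$-vector space, and the desired canonical $G_K$-isomorphism $E^\chi[\p] \cong E[p]$ (compatibly with Weil pairings) follows from the description of $E^\chi$ as $\ker(\Res^F_K(E) \to E)$: over $F$ this kernel decomposes into Galois conjugates of $E$, and the $\p$-torsion of its $\chi$-isotypic part is canonically identified with $E[p]$ as a $G_K$-module.

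For (ii) when $p>2$, Lemma \ref{2.4} tells us that the only Tate quadratic form on $H^1(K_v,T)$ is $q_v(x) = \frac{1}{2}\ld x, x\rd_v$. This depends only on the $G_K$-module $T$ together with its pairing to $\bmu_p$, so the identification of (i) (which preserves both) immediately gives $q_{E^\chi, v} = q_{E, v}$.

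The main obstacle is (ii) for $p=2$, where the metabolic structure is genuinely extra data beyond the Weil pairing. Here I would recall the theta-group construction of \cite[\S4]{poonenrains} (as the paper itself promises to do): $q_{E,v}$ arises from a canonical central extension of $E[2]$ by $\bmu_2$ built from the line bundle $\O_E(2[O])$. The plan is to show that the analogous extension for $E^\chi$ is identified with that for $E$ under the isomorphism of (i). This reduces to a descent argument: $E$ and $E^\chi$ become isomorphic over the quadratic extension $F$ cut out by $\chi$, the corresponding degree-$2$ line bundles are identified under this isomorphism, and the $\chi$-cocycle acts trivially on both $E[2]$ and $\bmu_2$, so the two $K$-descents of the theta-group extension coincide. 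Alternatively, one may appeal to the characterization in \cite{poonenrains} that the image of $E'(K_v)/2E'(K_v)$ in $H^1(K_v, E[2])$ is Lagrangian for the same form $q_{E,v}$ as $E'$ ranges over all quadratic twists of $E$; this characterization manifestly depends only on $E[2]$ with its Weil pairing, forcing $q_{E^\chi, v} = q_{E, v}$.
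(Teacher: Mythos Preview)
Your treatment of (i) and of (ii) for $p>2$ is fine and matches the paper's approach (citation for $p>2$, uniqueness via Lemma~\ref{2.4}).  The gap is in (ii) for $p=2$.

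Your descent argument for the theta-group extension asserts that because the $\chi$-cocycle (i.e.\ $[-1]$) acts trivially on the outer terms $E[2]$ and $\bar K_v^\times$ of the exact sequence
\[
1 \too \bar K_v^\times \too \Theta_E \too E[2] \too 0,
\]
the two $K_v$-descents of $\Theta_E$ must coincide.  But this does not follow: automorphisms of a central extension fixing both ends are parametrized by $\Hom(E[2],\bar K_v^\times) = \Hom(E[2],\bmu_2)$, a group of order~$4$, so there is genuine content in the claim that $[-1]^*$ is the identity on $\Theta_E$.  The paper supplies exactly this missing computation: an element $(f,P)\in\Theta_E$ with $P\in E[2]$ has $f$ a constant multiple of $X-X(P)$, and $X-X(P)$ is visibly invariant under $[-1]$.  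Without this (or an equivalent) calculation, the descent argument is incomplete.

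Your proposed alternative is circular.  Poonen--Rains show that $E'(K_v)/2E'(K_v)$ is Lagrangian for $q_{E',v}$, not for $q_{E,v}$; identifying those two forms is precisely the content of (ii).  And $q_{E,v}$ cannot be ``characterized in a way that depends only on $E[2]$ with its Weil pairing,'' since when $p=2$ there are several Tate quadratic forms on $H^1(K_v,E[2])$ (see the remark following Lemma~\ref{2.4}).
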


\begin{proof}
Assertion (i) follows directly from the definition of quadratic twist when $p = 2$ 
(or see the proof of (ii) below).  For general $p$, 
see \cite[Theorem 2.2(iii)]{mrs} or \cite[Proposition 4.1]{MRvisibility}.

The isomorphism of (i) identifies the Weil pairings on $E^\chi[\p]$ and $E[p]$, 
and hence it identifies the local Tate pairings on $H^1(K_v,E^\chi[\p])$ and $H^1(K_v,E[p])$ for every $v$.
Thus when $p > 2$, assertion (ii) follows from the uniqueness of the 
Tate quadratic form on $T$ (Lemma \ref{2.4}).
When $p=2$, we use an explicit construction of the quadratic form $q_{E,v}$.  
Let $\bar{K}_v(E)$ denote the function field of $E$ over $\bar{K}_v$.  Following 
\cite[Proposition 1.32]{cfoss} we define the Heisenberg (or theta) group 
$$
\Theta_E := \{(f,P) \in \bar{K}_v(E) \times E[2] : \text{the divisor of $f$ is $2[P]-2[O]$}\}
$$
with group law
$$
(f,P)\cdot(g,Q) := (\tau_Q^*(f)g,P+Q)
$$
where $\tau_Q$ is translation by $Q$ on $E$.  
The projection $\Theta_E \to E[2]$ induces an exact sequence
\begin{equation}
\label{tses}
1 \too \bar{K}_v^\times \too \Theta_E \too E[2] \too 0.
\end{equation}
We view $\Theta_E$ as an extension of $E[2]$ by $\G_m$, functorial in the sense that 
if $E'$ is another elliptic curve over $K_v$ and $\lambda : E \to E'$ is an isomorphism 
over $\bar{K}_v$, then $\lambda$ induces an isomorphism $\lambda^* : \Theta_{E'} \to \Theta_E$ 
over $\bar{K}_v$ (that commutes with \eqref{tses} in the obvious sense).  It is easy to see that the map
$$
\Isom(E,E') \too \Isom(\Theta_{E'},\Theta_E)
$$
defined by  $\lambda \mapsto \lambda^*$ is a $G_{K_v}$-equivariant homomorphism.

With this notation, $q_{E,v} : H^1(K_v,E[2]) \to H^2(K_v,\bar{K}_v^\times) \subset \Q/\Z$ is the connecting 
map of the long exact sequence of (nonabelian) Galois cohomology attached to \eqref{tses}. 

Fix an isomorphism $\lambda : E \to E^\chi$ defined over the quadratic field cut out by 
$\chi$.  For every  $\sigma \in  G_{K_v}$ we have $\lambda^\sigma = \lambda \circ [\chi(\sigma)]$, where 
$[\chi(\sigma)] : E \to E$ is multiplication by $\chi(\sigma) = \pm1$.
Thus the isomorphism $\lambda^* : \Theta_{E^\chi} \to \Theta_E$ induced by $\lambda$ satisfies 
$(\lambda^*)^\sigma = (\lambda^\sigma)^* = [\pm1]^*\circ\lambda^*$.

Clearly $[-1]$ acts trivially on $E[2]$.  
Suppose $f \in\bar{K}_v(E)$ has divisor $2[P]-2[O]$ with $P \in E[2] - O$.  
If we fix a Weierstrass model of $E$ with coordinate functions $X, Y$, then $f$ is a 
constant multiple of $X - X(P)$, so $f \circ [-1] = f$.  Hence $[-1]^*$ is the identity 
on $\Theta_E$, so in fact $(\lambda^*)^\sigma = \lambda^*$ for every $\sigma \in  G_{K_v}$.
Hence $\Theta_{E^\chi}$ and $\Theta_E$ are isomorphic over $K_v$ as extensions of $E[2]$,  
so by the definition above we have $q_{E^\chi,v} = q_{E,v}$.
\end{proof}

\begin{defn}
\label{etwda}
Let $\pi$ denote any generator of the ideal $\p$ of $\O$.
If $v$ is a place of $K$ and $\chi \in \Xset(K_v)$, define $\alpha_v(\chi)$ to be the image of the 
composition of the Kummer ``division by $\pi$'' map with the isomorphism of Lemma \ref{sameT}(i)
$$
\alpha_v(\chi) := \image\biggl(E^\chi(K_v)/\p E^\chi(K_v) \hookto H^1(K_v,E^\chi[\p]) 
   \isom H^1(K_v,E[p])\biggr).
$$
\end{defn}

Note that $\alpha_v(\chi)$ is independent of the choice of generator $\pi$.

\begin{lem}
\label{lagrangian}
For every place $v$ and $\chi\in\Xset(K_v)$, we have $\alpha_v(\chi) \in \H(q_{E,v})$.
\end{lem}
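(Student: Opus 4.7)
The plan is to transport the question from $E[p]$ to $E^\chi[\p]$ via Lemma \ref{sameT}, and then appeal to the fact that the Kummer image of an abelian variety is Lagrangian for its canonical quadratic form. Concretely, unwinding Definition \ref{etwda}, the subspace $\alpha_v(\chi)$ is the image of the Kummer map
$$
E^\chi(K_v)/\p E^\chi(K_v) \hookto H^1(K_v,E^\chi[\p])
$$
transported to $H^1(K_v,E[p])$ via the canonical isomorphism of Lemma \ref{sameT}(i). Because Lemma \ref{sameT}(ii) identifies $q_{E^\chi,v}$ with $q_{E,v}$ under that same isomorphism, it suffices to show that the Kummer image of $E^\chi$ is Lagrangian for $q_{E^\chi,v}$. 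So the real content is: for any abelian variety $A$ of the form $E^\chi$ over $K_v$, the Kummer image $A(K_v)/\p A(K_v)$ in $H^1(K_v,A[\p])$ is Lagrangian for $q_{A,v}$.

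For $p>2$, Lemma \ref{2.4} forces $q_{E^\chi,v}(x)=\tfrac12\ld x,x\rd_v$, so ``Lagrangian'' amounts to being self-orthogonal under $\ld\;,\;\rd_v$ and of dimension $\tfrac12\dim_{\Fp}H^1(K_v,E^\chi[\p])$. Both properties follow from local Tate duality applied to the Kummer exact sequence $0 \to E^\chi[\p] \to E^\chi \to E^\chi \to 0$ (with the isogeny being multiplication by a generator of $\p$, and using that the Weil pairing on $E^\chi[\p]$ is the one attached to this self-dual isogeny): the induced local Tate pairing makes the Kummer image its own annihilator, and the dimension count is the standard one.

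For $p=2$ the bilinear form does not determine $q_{E^\chi,v}$, so the content of the lemma is really the isotropy assertion. Here I would invoke the theorem of Poonen and Rains \cite{poonenrains}: for an elliptic curve $A$ over a local field, the Kummer image $A(K_v)/2A(K_v) \hookto H^1(K_v,A[2])$ is Lagrangian for the Heisenberg quadratic form $q_{A,v}$ constructed via \eqref{tses}. Concretely, the theta-group extension \eqref{tses} splits functorially over the image of rational points (given $P \in A(K_v)$, choose a function $f \in \bar K_v(A)$ with divisor $2[P]-2[O]$ and use the induced rational section of $\Theta_A$ at $P$), which forces $q_{A,v}$ to vanish on the Kummer image; self-orthogonality under the bilinear form is again the standard Tate-duality fact from the previous paragraph. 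Applying this with $A=E^\chi$ finishes the proof. The main obstacle, and the place where the $p=2$ case genuinely differs from $p>2$, is precisely the vanishing of $q_{A,v}$ on the Kummer image; this is the Poonen--Rains Heisenberg computation, and once it is granted the rest is formal.
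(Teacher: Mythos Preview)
Your proof is correct and follows essentially the same route as the paper's: reduce via Lemma \ref{sameT}(ii) to showing the Kummer image of $E^\chi$ is Lagrangian for $q_{E^\chi,v}$, then invoke Poonen--Rains \cite[Proposition 4.10]{poonenrains} for $p=2$ and the standard Tate-duality argument for $p>2$. The only cosmetic difference is that for $p>2$ the paper cites \cite[Proposition A.7]{MRvisibility} rather than spelling out the self-orthogonality and dimension count as you do, but the content is the same.
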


\begin{proof}
If $p=2$, then \cite[Proposition 4.10]{poonenrains} shows for every $v$ that 
the image of $E^\chi(K_v)/2E^\chi(K_v)$ in $H^1(K_v,E^\chi[2])$ is a Lagrangian subspace for $q_{v,E^\chi}$.  
If $p>2$, then \cite[Proposition A.7]{MRvisibility} 
(together with Lemma \ref{2.4}) shows that $\alpha_v(\chi)$ is a Lagrangian subspace 
for the (unique) Tate quadratic form on $H^1(K_v,E^\chi[\p])$, and hence for $q_{v,E^\chi}$.  
Now the lemma follows from Lemma \ref{sameT}(ii).
\end{proof}

As in Definition \ref{comp}, let 
$\com{\chi}{\chi'} := \dim_{\Fp}(\alpha_v(\chi)/(\alpha_v(\chi)\cap\alpha_v(\chi')))$.

\begin{lem}
\label{prekramer}
Suppose $v$ is a place of $K$, and $\chi \in \Xset(K_v)$.  Let $F/K_v$ be the 
cyclic extension cut out by $\chi$.  Then 
$$
\com{\one_v}{\chi} = \dim_{\Fp}E(K_v)/N_{F/K_v}E(F).
$$
\end{lem}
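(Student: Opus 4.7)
The plan is to identify the intersection $\alpha_v(\one_v)\cap\alpha_v(\chi)$ inside $H^1(K_v,E[p])$ with the Kummer image $\delta_E(N_{F/K_v}E(F))$, where $\delta_E:E(K_v)/pE(K_v)\hookto H^1(K_v,E[p])$ is the usual Kummer map and $F/K_v$ is the cyclic extension of degree $p$ cut out by $\chi$. If $\chi=\one_v$ then $F=K_v$ and both sides of the lemma vanish, so we may assume $\chi\ne\one_v$. Granting this identification, $\alpha_v(\one_v)/(\alpha_v(\one_v)\cap\alpha_v(\chi))\cong E(K_v)/(N_{F/K_v}E(F)+pE(K_v))$; since every $P\in E(K_v)\subseteq E(F)$ satisfies $N_{F/K_v}P=pP$, we have $pE(K_v)\subseteq N_{F/K_v}E(F)$, so this quotient equals $E(K_v)/N_{F/K_v}E(F)$ and the lemma follows.

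To carry out the identification I would use the defining short exact sequence of commutative algebraic groups $0\to E^\chi\to\Res^F_{K_v}(E)\to E\to 0$ over $K_v$. Taking $K_v$-points and applying Shapiro's lemma (so that $H^1(K_v,\Res^F_{K_v}(E))=H^1(F,E)$) yields
$$
0\to E^\chi(K_v)\to E(F)\xrightarrow{N_{F/K_v}} E(K_v)\xrightarrow{\partial} H^1(K_v,E^\chi)\to H^1(F,E),
$$
so $\partial$ induces an injection $E(K_v)/N_{F/K_v}E(F)\hookto H^1(K_v,E^\chi)[\p]$. On the other hand, the Kummer sequence for $E^\chi$ together with the identification $E^\chi[\p]\cong E[p]$ of Lemma \ref{sameT}(i) realizes $\alpha_v(\chi)$ as the kernel of the natural surjection $H^1(K_v,E[p])=H^1(K_v,E^\chi[\p])\twoheadrightarrow H^1(K_v,E^\chi)[\p]$; similarly $\alpha_v(\one_v)$ is the kernel of the analogous surjection onto $H^1(K_v,E)[p]$.

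The remaining step, which I expect to be the main obstacle, is to check that the two resulting maps $E(K_v)/pE(K_v)\to H^1(K_v,E^\chi)[\p]$ coincide: the first is $\delta_E$ followed by the projection displayed above, and the second is reduction to $E(K_v)/N_{F/K_v}E(F)$ followed by $\partial$. Granting this, $\alpha_v(\one_v)\cap\alpha_v(\chi)=\delta_E(\ker\partial)=\delta_E(N_{F/K_v}E(F))$ as required. This comparison is a cocycle-level diagram chase: for $P\in E(K_v)$ pick $P'\in E(\bar{K}_v)$ with $pP'=P$ and a preimage $\widetilde{P}\in\Res^F_{K_v}(E)(\bar{K}_v)$ of $P$, then relate the Kummer cocycle $\tau\mapsto\tau P'-P'$ to the connecting cocycle of $\widetilde{P}$ (valued in $E^\chi$) using the $\O$-action on $E^\chi$ from Definition \ref{etwist} and the explicit form of the identification $E^\chi[\p]\cong E[p]$ proved in Lemma \ref{sameT}. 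For $p=2$ this reproduces the local computation in Kramer's classical paper; for general $p$ the same strategy works because Lemma \ref{sameT}(i) carries the $E^\chi$-Kummer data through to $H^1(K_v,E[p])$.
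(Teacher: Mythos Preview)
The paper's own proof is a bare citation: Kramer \cite{kramer} for $p=2$ and \cite[Corollary~5.3]{MRvisibility} for general $p$. Your proposal is essentially a reconstruction of the argument behind those references, and the outline is correct. In particular, identifying $\alpha_v(\one_v)\cap\alpha_v(\chi)$ with $\delta_E(N_{F/K_v}E(F))$ via the exact sequence $0\to E^\chi\to\Res^F_{K_v}E\to E\to 0$ is exactly the mechanism used in \cite[\S5]{MRvisibility}, and the ``main obstacle'' you flag---commutativity of the two maps $E(K_v)\to H^1(K_v,E^\chi)[\p]$---is precisely what is checked there by an explicit cocycle computation of the sort you sketch.

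One small point you pass over: you assert that $\partial$ lands in $H^1(K_v,E^\chi)[\p]$ without justification. This holds because the exact sequence is $\Z[\Gal(F/K_v)]$-equivariant (with trivial action on $E$), so the image of $\partial$ is fixed by $\Gal(F/K_v)$; since a generator $\sigma$ acts on $E^\chi$ through $\O$ as multiplication by a primitive $p$-th root of unity, being $\sigma$-fixed is the same as being killed by $\pi=\sigma-1\in\p$. This is not a gap in the strategy, just a detail worth making explicit.
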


\begin{proof}
When $p=2$, this is due to Kramer \cite[Proposition 7]{kramer}.  
For general $p$ this is \cite[Corollary 5.3]{MRvisibility}.  
(That result is stated only for $p>2$, but the 
proof for $p=2$ is the same.)  
\end{proof}

\begin{lem}
\label{ttd}
Suppose $p=2$, $v \in \Sigma$, and $\psi \in \Xset(K_v)$.  
Let 
$$
\alpha_v^\psi : \Xset(K_v) \too \H(q_{E^\psi,v}) = \H(q_{E,v})
$$
and 
$\compsi{\one_v}{\chi} := \dim_{\F_2}(\alpha_v^\psi(\one_v)/(\alpha_v^\psi(\one_v)\cap\alpha_v^\psi(\chi)))$
be as defined in Definitions \ref{etwda} and \ref{comp}, respectively, for $E^\psi$ instead of $E$.
Then
$$
\compsi{\one_v}{\chi} = \com{\psi}{\chi\psi} \equiv \com{\one_v}{\psi} + \com{\one_v}{\chi\psi} \pmod{2}.
$$
\end{lem}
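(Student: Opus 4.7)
The plan is to reduce both assertions to statements about Lagrangian subspaces of the metabolic space $(H^1(K_v,E[2]),q_{E,v})$.

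First I would unwind the definitions to show $\alpha_v^\psi(\chi) = \alpha_v(\chi\psi)$ as subspaces of $H^1(K_v,E[2])$. Since $p=2$, composition of quadratic twists gives $(E^\psi)^\chi = E^{\chi\psi}$. Applying Definition \ref{etwda} with $E$ replaced by $E^\psi$ shows that $\alpha_v^\psi(\chi)$ is the image of $E^{\chi\psi}(K_v)/2E^{\chi\psi}(K_v)$ under the Kummer map composed with the canonical identification $H^1(K_v,E^\psi[2])\isom H^1(K_v,E[2])$. The key point to verify is that this route agrees with the direct identification $H^1(K_v,E^{\chi\psi}[2]) \isom H^1(K_v,E[2])$ used in the definition of $\alpha_v(\chi\psi)$; since $[-1]$ acts trivially on $2$-torsion, the canonical $G_{K_v}$-isomorphisms of Lemma \ref{sameT}(i) between $E^{\chi\psi}[2]$, $E^\psi[2]$, and $E[2]$ are compatible. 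Taking $\chi=\one_v$ (so that $(E^\psi)^{\one_v}=E^\psi$) likewise gives $\alpha_v^\psi(\one_v)=\alpha_v(\psi)$, and the first equality $\compsi{\one_v}{\chi}=\com{\psi}{\chi\psi}$ is then immediate from the definitions of $\compsi{\cdot}{\cdot}$ and $\com{\cdot}{\cdot}$.

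For the congruence, I would apply Corollary \ref{ell3} to the three Lagrangian subspaces (Lagrangian by Lemma \ref{lagrangian}) $X:=\alpha_v(\one_v)$, $Y:=\alpha_v(\psi)$, $Z:=\alpha_v(\chi\psi)$. That yields
$$
\com{\one_v}{\psi} \;+\; \com{\psi}{\chi\psi} \;+\; \dim_{\F_2}Z/(X\cap Z) \;\equiv\; 0 \pmod 2.
$$
Since all Lagrangian subspaces of a metabolic space share the common dimension $\tfrac12\dim_{\F_2}H^1(K_v,E[2])$, we have $\dim_{\F_2}Z/(X\cap Z)=\dim_{\F_2}X/(X\cap Z)=\com{\one_v}{\chi\psi}$. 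Rearranging modulo $2$ then gives $\com{\psi}{\chi\psi}\equiv \com{\one_v}{\psi}+\com{\one_v}{\chi\psi}$, as desired.

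The only step requiring any care is the compatibility of the two identifications $E^{\chi\psi}[2]\cong E[2]$ (directly, versus via $E^\psi[2]$) that underlies the first equality; once this is in hand, the remainder is a formal consequence of Corollary \ref{ell3} together with the fact that Lagrangian subspaces of a metabolic space all have the same dimension.
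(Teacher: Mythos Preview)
Your proof is correct and follows essentially the same route as the paper: the paper also deduces $\alpha_v^\psi(\chi)=\alpha_v(\chi\psi)$ directly from Definition \ref{etwda} and then applies Corollary \ref{ell3} to the three Lagrangian subspaces $\alpha_v(\one_v)$, $\alpha_v(\psi)$, $\alpha_v(\chi\psi)$. Your write-up is a bit more explicit (the compatibility of the identifications of $2$-torsion, and the observation that $\dim Z/(X\cap Z)=\dim X/(X\cap Z)$ since Lagrangians have equal dimension), but the argument is the same.
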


\begin{proof}
It follows directly from Definition \ref{etwda} that 
$\alpha_v^\psi(\chi) = \alpha_v(\chi\psi)$ for every $\chi\in\Xset(K_v)$.
This proves the equality, and the congruence follows from Corollary \ref{ell3} 
applied to the Lagrangian subspaces
$\alpha_v(\one_v)$, $\alpha_v(\psi)$, and $\alpha_v(\psi\chi)$.
\end{proof}

\begin{lem}
\label{4.6}
Suppose $p > 2$, $v\in\cP_2$, and $\chi\in\Xset(K_v)$ is nontrivial.  If $F$ is the 
cyclic extension of $K_v$ corresponding to $\chi$, then
$$
\alpha_v(\chi) = \Hom(\Gal(F/K_v),E[p]) \subset \Hom(G_{K_v},E[p]) = H^1(K_v,E[p]).
$$
\end{lem}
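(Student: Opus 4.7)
The plan is to realize both $\alpha_v(\chi)$ and $\Hom(\Gal(F/K_v),E[p])$ as 2-dimensional subspaces of $H^1(K_v,E[p]) = \Hom(G_{K_v},E[p])$ and then establish containment; equality follows by the dimension count.

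Since $v\in\cP_2$, the module $E[p]$ is trivial as a $G_{K_v}$-module, and by Lemma~\ref{sameT}(i) so is $E^\chi[\p]$; hence $H^1(K_v,E[p]) = \Hom(G_{K_v},E[p])$. The inflation--restriction sequence for $G_F \triangleleft G_{K_v}$ acting on the trivial module $E[p]$ identifies $\Hom(\Gal(F/K_v),E[p])$ with $\ker(\mathrm{res}\colon H^1(K_v,E[p]) \to H^1(F,E[p]))$. By Lemma~\ref{countlem}(ii) we have $\dim_{\F_p}\alpha_v(\chi) = d_v = 2$, and directly $\dim_{\F_p}\Hom(\Gal(F/K_v),E[p]) = 2$; so the lemma reduces to showing the inclusion $\alpha_v(\chi) \subset \ker(\mathrm{res})$.

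By naturality of the Kummer map along $K_v \hookrightarrow F$, this inclusion is equivalent to the vanishing of the induced map $E^\chi(K_v)/\pi E^\chi(K_v) \to E^\chi(F)/\pi E^\chi(F)$, i.e.\ to the statement that every $K_v$-rational point of $E^\chi$ is $\pi$-divisible in $E^\chi(F)$. I expect verifying this divisibility to be the main technical step.

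To prove it I would split into cases. If $\chi$ is unramified, then $F/K_v$ is the unique unramified degree-$p$ extension, $E^\chi$ has good reduction at $v$, and both $\alpha_v(\chi)$ and $\Hom(\Gal(F/K_v),E[p])$ agree with the unramified Lagrangian $\Hu(K_v,E[p])$, so there is nothing to prove. If $\chi$ is ramified, then $F/K_v$ is totally ramified of degree $p$, and the splitting $E^\chi \times_{K_v} F \cong E_F^{p-1}$ (valid since $\chi|_{G_F}=1$) together with the description $E^\chi = \ker(\Res^F_{K_v}E \to E)$ identifies $E^\chi(K_v)$ with the trace-zero subgroup $\{y \in E(F) : \tr_{F/K_v}(y)=0\}$ and realizes the $\O$-action of $\pi = \zeta_p-1$ as $\tau-1$, where $\tau \in \Gal(F/K_v)$ satisfies $\chi(\tau)=\zeta_p$. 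Since $v\notin\Sigma$, $E$ has good reduction with residue characteristic $\ell\ne p$, so the formal group $\hat E(\m_F)$ is pro-$\ell$ and in particular $p$-divisible. For $x \in E^\chi(K_v)$, reduction modulo $\m_F$ lands in $\tilde E[p](k_v)=E[p]$ (using $\tr x=0$, totally ramified $k_F=k_v$, and $E[p]\subset E(K_v)$); a short direct computation--crucially invoking that $p$ is odd so the integer $(p-1)/2$ makes sense--then shows the obstruction to solving $\pi y = x$ in $E^\chi(F)$ reduces to $0$ on $\tilde E(k_v)$, hence lies in the $p$-divisible formal group $\hat E^{p-1}(\m_F)$, yielding the required divisibility.
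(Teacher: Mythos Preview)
Your overall strategy---prove the inclusion $\alpha_v(\chi)\subset\Hom(\Gal(F/K_v),E[p])$ and then invoke the dimension count from Lemma~\ref{countlem}(ii)---is exactly the paper's, and your reduction to the statement $E^\chi(K_v)\subset\pi E^\chi(F)$ is correct. Your treatment of the unramified case is fine (and indeed the paper handles this case elsewhere, in the proof of Proposition~\ref{4.9}, by citing \cite[Lemma 4.1]{cassels}).

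The ramified case, however, has a genuine gap. You correctly identify $E^\chi(K_v)$ with the trace-zero subgroup of $E(F)$, on which $\pi$ acts as $\tau-1$. But the divisibility you need is in $E^\chi(F)$, not in $E^\chi(K_v)$, and you never describe the $\pi$-action on $E^\chi(F)\cong E(F)^{p-1}$ or explain how the two embeddings $E^\chi(K_v)\hookrightarrow E(F)$ and $E^\chi(K_v)\hookrightarrow E^\chi(F)$ are related. Concretely: since $F/K_v$ is totally ramified, $\tau$ acts trivially on $\tilde E(k_v)$, so $(\tau-1)y_0$ always reduces to $0$; thus no choice of $y_0\in E^\chi(K_v)$ can kill the reduction $\bar x\in E[p]$ when $\bar x\ne0$. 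Whatever your ``short direct computation'' involving $(p-1)/2$ is meant to be, it cannot take place inside $E(F)^{\tr=0}$ with $\pi=\tau-1$, and you have not said what replaces it in $E^\chi(F)$.

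The paper avoids this entirely by a uniform argument that does not split on ramification. Using the description $E^\chi[p]\cong\I\otimes_\Z E[p]$ from \cite{mrs} (with $\I$ the augmentation ideal of $\Z[\Gal(F/K_v)]$ and $\pi$ acting as $(1-\sigma)\otimes1$), one computes directly that $E^\chi(K_v)[p^\infty]=E^\chi[\p]$ and $E^\chi[p]\subset E^\chi(F)$. Since $v\nmid p\infty$, the group $E^\chi(K_v)$ splits as $E^\chi[\p]\times B$ with $pB=B$, so $B\subset\pi E^\chi(F)$ trivially; and $E^\chi[\p]\subset\pi E^\chi[p]$ because $\I/p\I\cong\F_p[t]/(t^{p-1})$ has $\ker(t)=(t^{p-2})\subset(t)=\im(t)$ precisely when $p>2$. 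This is where the hypothesis $p>2$ actually enters, and it replaces your unexplained appeal to $(p-1)/2$.
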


\begin{proof}
Let $G := \Gal(F/K_v)$.  Fix a generator $\sigma$ of $G$, and let $\pi = 1 - \chi(\sigma) \in \O$, 
so $\pi\O = \p$.
Let $\I := (\sigma-1)\Z[G]$ be the augmentation ideal of $\Z[G]$.
By \cite[Theorem 2.2(ii)]{mrs}, we have an isomorphism of $\O[G_{K_v}]$-modules 
$$
E^\chi[p] = \I \otimes_\Z E[p] 
$$
where $\gamma\in G_{K_v}$ acts by $\gamma^{-1} \otimes \gamma$ on $\I \otimes E[p]$, 
and $\pi$ acts as multiplication by $(1-\sigma)\otimes 1$ on $\I \otimes E[p]$.
Since $v \in \cP_2$, $G_{K_v}$ acts trivially on $E[p]$, and $G_F$ acts trivially on 
both $\I$ and $E[p]$.  Hence 
\begin{multline}
\label{stable1}
E^\chi(K_v)[p] = E^\chi[p]^{G_{K_v}} = (\I \otimes_\Z E[p])^{G_{K_v}} \\
   = (\I \otimes_\Z E[p])^{\sigma=1} = (\I \otimes_\Z E[p])^{\pi=0} =  E^\chi[\p], 
\end{multline}
\begin{equation}
\label{stable2}
E^\chi(F)[p] = E^\chi[p]^{G_F} = (\I \otimes_\Z E[p])^{G_{F}} = \I \otimes_\Z E[p] = E^\chi[p].
\end{equation}
Since $p>2$, we have $\p^2 \mid p$, so it follows from \eqref{stable1} that 
$E^\chi(K_v)[p^\infty] = E^\chi[\p]$.  Therefore since $v \nmid p\infty$, we have
$E^\chi(K_v) \cong E^\chi[\p] \times B$ with a profinite abelian group $B$ such that $p B = B$, 
so we deduce from \eqref{stable2} that $E^\chi(K_v) \subset \pi E^\chi(F)$.  
Identifying $H^1(K_v,E[p])$ with $\Hom(G_{K_v},E[p])$, it follows from Definition \ref{etwda} that 
if $c \in \alpha_v(\chi) \subset \Hom(G_{K_v},E[p])$, then $c(G_F) = 0$.  
Thus $\alpha_v(\chi) \subset \Hom(\Gal(F/K_v),E[p])$.  
By Lemma \ref{countlem}(ii) we have 
$\dim_{\Fp}\alpha_v(\chi) = 2 = \dim_{\Fp}\Hom(\Gal(F/K_v),E[p])$, which completes the proof.
\end{proof}

\begin{prop}
The maps $\alpha_v$ of Definition \ref{etwda}, for $v \in \Sigma$ and $v \in \cP_2$, 
give twisting data as in Definition \ref{twistdata}.
\end{prop}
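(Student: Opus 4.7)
My plan is to split the claim into two parts: (A) for $v \in \Sigma$, the map $\alpha_v$ descends to $\Fset(K_v) = \Xset(K_v)/\Aut(\bmu_p)$ and lands in $\H(q_v)$; (B) for $v \in \cP_2$, its restriction to $\Xset_\ram(K_v)$ descends to a bijection $\Fset_\ram(K_v) \to \H_\ram(q_v)$. Part (A) is essentially formal: the abelian variety $E^\chi$ of Definition \ref{etwist} depends only on the cyclic extension of $K_v$ cut out by $\chi$, which coincides with the one cut out by $\chi^i$ for any $i \in \F_p^\times$; hence $\alpha_v(\chi) = \alpha_v(\chi^i)$ and the Lagrangian property is exactly Lemma \ref{lagrangian}.

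For part (B), I would first observe that $|\H_\ram(q_v)| = p = |\Fset_\ram(K_v)|$ by Lemma \ref{countlem}(iii) and local class field theory, so it suffices to prove (a) $\alpha_v(\Xset_\ram(K_v)) \subset \H_\ram(q_v)$ and (b) $\alpha_v$ is injective on $\Xset_\ram(K_v)$ modulo $\Aut(\bmu_p)$. When $p > 2$, Lemma \ref{4.6} handles both at once: the identification $\alpha_v(\chi) = \Hom(\Gal(F/K_v), E[p])$ meets $\Hu(K_v, E[p]) = \Hom(\Gal(K_v^\ur/K_v), E[p])$ trivially precisely when $F/K_v$ is totally ramified (giving (a)), and distinct totally ramified $F$ obviously yield distinct subspaces (giving (b)).

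When $p = 2$, the hypothesis $v \in \cP_2$ forces $v \nmid 2\infty$ and $E$ to have good reduction at $v$, so standard Kummer theory yields $\alpha_v(\one_v) = \Hu(K_v, E[2])$. For a ramified quadratic $\chi$ cutting out $F/K_v$, Lemma \ref{prekramer} reduces (a) to showing $\dim_{\F_2} E(K_v)/N_{F/K_v} E(F) = 2$. This I would obtain from the reduction exact sequence: the formal group $\hat{E}(\mathfrak{m}_F)$ is uniquely $2$-divisible (as $v \nmid 2$), so $\hat{H}^0(\Gal(F/K_v), \hat{E}(\mathfrak{m}_F)) = 0$, and the Tate cohomology long exact sequence then identifies the norm cokernel with $\tilde{E}(k_v)/2\tilde{E}(k_v)$; this in turn has $\F_2$-dimension $2$ because $v \in \cP_2$ gives $E[2] \subset \tilde{E}(k_v)$.

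The main obstacle is (b) when $p = 2$: showing that the two distinct ramified quadratic characters $\chi_1 \ne \chi_2$ give distinct Lagrangians. My plan is to invoke Lemma \ref{ttd} with $\psi = \chi_1$ to rewrite $\com{\chi_1}{\chi_2} = \compsi{\chi_1}{\one_v}{\chi_1 \chi_2}$, where $\chi_1 \chi_2$ is the unramified quadratic character of $K_v$; then Lemma \ref{prekramer} applied to $E^{\chi_1}$ over the unramified quadratic extension $F'/K_v$ reduces the question to showing $\dim_{\F_2} E^{\chi_1}(K_v)/N_{F'/K_v} E^{\chi_1}(F') \ne 0$. Since $E^{\chi_1}$ is the ramified quadratic twist (at a prime of odd residue characteristic) of a curve of good reduction, it has additive reduction of Kodaira type $I_0^*$ at $v$, and Tate's algorithm (or the explicit formulas of Kramer) computes this norm index to be $2$, giving $\alpha_v(\chi_1) \cap \alpha_v(\chi_2) = 0$ and hence $\alpha_v(\chi_1) \ne \alpha_v(\chi_2)$ as required.
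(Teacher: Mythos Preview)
Your proof is correct and follows essentially the same route as the paper's. The paper handles part (a) for all $p$ by citing \cite[Lemma 2.11]{MRstablerank} to get $\alpha_v(\chi)\cap\Hu(K_v,T)=0$, whereas you supply self-contained arguments: Lemma~\ref{4.6} for $p>2$, and a formal-group/Tate-cohomology computation for $p=2$. For part (b) when $p=2$ the two arguments are identical in structure---pass to $E^{\chi_1}$ via Lemma~\ref{ttd}, then use Lemma~\ref{prekramer} over the unramified quadratic extension and invoke Tate's algorithm for the type-$\mathrm{I}_0^*$ twist.

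One small slip: in the last step you write ``norm index to be $2$'', but in fact $E^{\chi_1}(K_v)/N_{F'/K_v}E^{\chi_1}(F')\cong E[2]$ has order $4$ (equivalently $\F_2$-dimension $2$), since $E^{\chi_1}(K_v)$ and $E^{\chi_1}(F')$ are each the product of $E[2]$ with a profinite group of odd order, forcing $N_{F'/K_v}E^{\chi_1}(F')=2E^{\chi_1}(K_v)$. Your conclusion $\alpha_v(\chi_1)\cap\alpha_v(\chi_2)=0$ is consistent with this corrected value (and for the proposition any nonzero value would already suffice to give $\alpha_v(\chi_1)\ne\alpha_v(\chi_2)$).
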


\begin{proof}
It follows from the definition that $\alpha_v(\chi)$ depends only on the extension of 
$K_v$ cut out by $\chi$.

By Lemma \ref{lagrangian}, $\alpha_v(\chi) \in \H(q_{E,v})$ for every 
$v$ and every $\chi\in\Xset(K_v)$.
Thus $\alpha_v$ satisfies Definition \ref{twistdata}(i) for $v \in \Sigma$.

Now suppose that $v \in \cP_2$.  
If $\chi \in \Xset_\ram(K_v)$, then $\alpha_v(\chi)\cap\Hu(K_v,T) = 0$  
by \cite[Lemma 2.11]{MRstablerank}, so  $\alpha_v(\chi) \in \H_\ram(q_{E,v})$. 
To complete the proof of the proposition we need only show that the map 
$\alpha_v : \Xset_\ram(K_v)/\Aut(\bmu_p) \to \H_\ram(q_{E,v})$ is a bijection.  
Since 
$$
|\Xset_\ram(K_v)/\Aut(\bmu_p)| = p = |\H_\ram(q_{E,v})|
$$ 
by local class field theory and Lemma \ref{countlem}(iii), 
we only need to show the injectivity of $\alpha_v$, and when $p > 2$ 
this follows from Lemma \ref{4.6}.

Suppose $p = 2$.  Let $\psi,\chi \in \Xset(K_v)$ be a ramified and nontrivial unramified character, 
respectively.  Then $\Xset_\ram(K_v) = \{\psi,\chi\psi\}$, so we need only show that 
$\alpha_v(\psi) \ne \alpha_v(\chi\psi)$.

Let $F$ be the unramified quadratic extension of $K_v$.  Since $v \in \cP_2$, 
we have that $v \nmid 2$, $E$ has good reduction at $v$, and $G_{K_v}$ acts trivially on $E[2]$.  
Since $\psi$ is ramified over $F$, $E^\psi$ has additive reduction over $F$ above $v$.  
Tate's algorithm \cite{tatealg} shows that 
$E^\psi(F)[2^\infty] = E^\psi(F)[2] = E^\psi(K_v)[2] = E[2]$, and so 
$E^\psi(K_v)$ and $E^\psi(F)$ are each isomorphic to the product of the Klein $4$-group $E[2]$ 
with profinite abelian groups of odd order.
Hence $\N_{F/K_v}E^\psi(F) = 2E^\psi(K_v)$, 
so by Lemmas \ref{prekramer} and \ref{ttd}
$$
2 = \compsi{\one_v}{\chi} = \com{\psi}{\chi\psi} 
   = \dim_{\F_2}(\alpha_v(\psi)/(\alpha_v(\psi) \cap \alpha_v(\chi\psi)))
$$
and in particular $\alpha_v(\psi) \ne \alpha_v(\chi\psi)$.
\end{proof}

\begin{prop}
\label{4.9}
With the twisting data of Definition \ref{etwda}, and any generator $\pi$ of $\p$, 
for $\chi \in \Xset(K)$ we have 
that $\Sel(E[p],\chi) \cong \Sel_\pi(E^\chi/K)$, the usual $\pi$-Selmer group of $E^\chi/K$.
In particular when $p=2$, $\Sel(E[2],\chi) = \Sel_2(E^\chi/K)$ is 
the classical $2$-Selmer group of $E^\chi/K$.
\end{prop}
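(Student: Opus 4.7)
The plan is to identify both Selmer groups as subgroups of $H^1(K,E[p])$ via the canonical $G_K$-equivariant isomorphism $E^\chi[\pi] \cong E[p]$ of Lemma \ref{sameT}(i), and then verify place-by-place that they are cut out by the same local conditions. Let $\d$ denote the part of the conductor of $\chi$ supported on $\cP_1 \cup \cP_2$, as in Definition \ref{ddef}, so that by Definition \ref{sstwist} we have $\Sel(E[p],\chi) = H^1_{\cS(\eta_\d(\chi))}(K,E[p])$.

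At a place $v \in \Sigma$ or $v \mid \d_2$, the local condition defining $\Sel(E[p],\chi)$ at $v$ is $\alpha_v(\chi_v)$, which by Definition \ref{etwda} is precisely the image of the Kummer map $E^\chi(K_v)/\pi E^\chi(K_v) \hookrightarrow H^1(K_v,E[p])$, so these cases are immediate. At a place $v \notin \Sigma(\d)$ with $\chi_v$ unramified, $E^\chi$ inherits good reduction at $v$ from $E$, so the Kummer image equals the unramified subgroup $\Hu(K_v,E^\chi[\pi])$; this matches the default local condition $\Hu(K_v,E[p])$ under the Galois-equivariant identification. At a place $v \in \cP_0$ with $\chi_v$ ramified, the remark following Definition \ref{twistdata} shows that either $\Xset_\ram(K_v)$ is empty (so this case does not arise) or $H^1(K_v,E[p]) = 0$ by Lemma \ref{countlem}(i) (so both conditions are trivially zero).

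The only substantive case is $v \mid \d_1$: here $v \in \cP_1$ and $\chi_v$ is ramified. The local condition in $\cS(\eta_\d(\chi))$ is the unique element of $\H_\ram(q_{E,v})$, which has cardinality $p^{d_v-1} = 1$ by Lemma \ref{countlem}(iii). By Lemma \ref{lagrangian} the Kummer image $\alpha_v(\chi_v)$ is Lagrangian, and since $\dim_{\Fp} H^1(K_v,E[p]) = 2$ with $\dim_{\Fp}\alpha_v(\chi_v) = d_v = 1$, it is enough to rule out $\alpha_v(\chi_v) = \Hu(K_v,E[p]) = \alpha_v(\one_v)$. For this I would invoke \cite[Lemma 2.11]{MRstablerank} (used in the preceding proposition for $v \in \cP_2$ and applicable verbatim here), or equivalently apply Lemma \ref{prekramer} together with the positivity of $\dim_{\Fp} E(K_v)/N_{F/K_v}E(F)$ for the tame ramified cyclic $p$-extension $F/K_v$ cut out by $\chi_v$, coming from the Tate local pairing.

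The hard part will be precisely this last step, establishing that the Kummer image of a ramified twist at $v \in \cP_1$ is genuinely ramified; the remaining places only require matching local conditions using facts already assembled in the preceding lemmas.
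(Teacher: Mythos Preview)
Your proposal is correct and follows essentially the same place-by-place verification as the paper's proof. The paper cites \cite[Lemma 4.1]{cassels} for the unramified case (rather than arguing via good reduction of $E^\chi$) and dispatches the $v\in\cP_1$ case in one line via the same citation \cite[Lemma 2.11]{MRstablerank} that you propose, so your ``hard part'' is in fact no harder than the $\cP_2$ case already handled in the preceding proposition.
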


\begin{proof}
Let $\d\in\D$ be such that $\chi\in\Xset(\d)$.
By definition
$$
\Sel_\pi(E^\chi/K) \cong \{c \in H^1(K,E[p]) : \text{$c_v \in \alpha_v(\chi)$ for every $v$}\}
$$
with $\alpha_v(\chi)$ as in Definition \ref{etwda}.  Thus we need to show that 
$H^1_{\cS(\gamma)}(K_v,T) = \alpha_v(\chi)$ for every $v$, where 
$\gamma := \eta_\d(\chi) \in \Gamma_\d$.  

If $v \in \Sigma$, or if $v\mid\d$ and $v\in\cP_2$, then this is the definition of 
$H^1_{\cS(\gamma)}(K_v,T)$.  
If $v\in\cP_0$ and $\chi$ is ramified at $v$, then $H^1(K_v,T) = 0$ by Lemma \ref{countlem}(i), 
and if $v \notin \Sigma$ and $\chi$ is unramified at $v$, then 
$\alpha_v(\chi) = \Hu(K_v,T)$ by \cite[Lemma 4.1]{cassels}, so in those cases we also have 
$H^1_{\cS(\gamma)}(K_v,T) = \alpha_v(\chi)$.

It remains only to check those $v$ such that $v\mid\d$ and $v\in\cP_1$.   
In that case $\alpha_v(\chi)\cap\Hu(K_v,T) = 0$ by 
\cite[Lemma 2.11]{MRstablerank}, so $\alpha_v(\chi) \subset \H_\ram(q_{E,v})$.  
But in this case $|\H_\ram(q_{E,v})| = 1$ by Lemma \ref{countlem}(iii), and 
$H^1_{\cS(\gamma)}(K_v,T)$ is the unique element of 
$\H_\ram(q_{E,v})$ by definition, so $H^1_{\cS(\gamma)}(K_v,T) = \alpha_v(\chi)$ in this case 
also.  This completes the proof.
\end{proof}

\begin{rem}
Definition \ref{etwist} of $E^\chi$ shows that $E^\chi$ depends only on the field cut out by $\chi$, 
not on the choice of character $\chi$ itself.  As mentioned in Remark \ref{3.7}, it is easier to count 
characters of order $p$ than cyclic extensions of degree $p$, because the set of characters is a group.

If $F/K$ is the cyclic extension cut out by $\chi \ne \one_K$, then the short exact sequence 
$0 \to E^\chi \to \Res^F_K(E) \to E \to 0$ of Definition \ref{etwist} gives an identity of Mordell-Weil 
ranks 
$$
\rk(E(F)) = \rk(\Res^F_K(E)(K)) = \rk(E(K)) + \rk(E^\chi(K)).
$$
In particular if $\Sel(E[p],\chi) = 0$, then by Proposition \ref{4.9} we have 
$\rk(E(F)) = \rk(E(K))$.    
\end{rem}

\section{Local and global characters}
\label{lgc}
For the rest of this paper we
fix $T$ and $\Sigma$ as in \S\ref{notation}, a global \metabolic structure $\bq$ on $T$ 
as in Definition \ref{ahsdef}, 
and twisting data as in Definition \ref{twistdata}.  
Recall that $K(T)$ is the field of definition of the elements of $T$, i.e., 
the fixed field in $\Kb$ of $\ker (G_K \to \Aut(T))$.

For the rest of this paper we assume also that
\begin{equation}
\label{h2a} 
\Pic(\O_{K,\Sigma}) = 0,
\end{equation}
and
\begin{equation}
\label{h2b}
\O_{K,\Sigma}^\times/(\O_{K,\Sigma}^\times)^p \too \prod_{v \in \Sigma} K_v^\times/(K_v^\times)^p \quad
   \text{is injective},
\end{equation}
where $\O_{K,\Sigma}$ is the ring of $\Sigma$-integers of $K$, i.e., the 
elements that are integral at all $\l \notin \Sigma$.

\begin{lem}
Conditions \eqref{h2a} and \eqref{h2b} can always be satisfied by enlarging $\Sigma$ if necessary.
\end{lem}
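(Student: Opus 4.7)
My plan is to enlarge $\Sigma$ in two stages, first killing the class group and then killing the kernel of the local map, using finiteness (respectively Dirichlet's $\Sigma$-unit theorem and finiteness of $\Pic(\O_K)$) to ensure both stages terminate.

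First I would observe that $\Pic(\O_K)$ is finite, so by adjoining to $\Sigma$ a finite set of primes whose classes generate $\Pic(\O_K)$, one obtains $\Pic(\O_{K,\Sigma}) = 0$; this property is preserved by any further enlargement of $\Sigma$, since enlarging $\Sigma$ only adds more relations to the Picard group.

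Next, assuming $\Pic(\O_{K,\Sigma})=0$, the $\Sigma$-unit theorem makes $\O_{K,\Sigma}^\times$ finitely generated, so
$V_\Sigma := \ker\bigl[\O_{K,\Sigma}^\times/(\O_{K,\Sigma}^\times)^p \to \prod_{v\in\Sigma} K_v^\times/(K_v^\times)^p\bigr]$
is a finite-dimensional $\F_p$-vector space. I would prove by induction on $\dim V_\Sigma$ that it can be made trivial. Suppose $[u] \in V_\Sigma$ is nonzero. First note that $u$ is not a $p$-th power in $K^\times$: if $u=v^p$ with $v \in K^\times$, then for any $\l \notin \Sigma$, $p\cdot\ord_\l(v)=\ord_\l(u)=0$ forces $v\in\O_{K,\Sigma}^\times$, contradicting $[u]\neq 0$. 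Now I would apply Chebotarev to $N := K(\zeta_p,u^{1/p})/K$: the set of $\sigma\in\Gal(N/K)$ fixing $\zeta_p$ but not $u^{1/p}$ is nonempty (using that $[K(u^{1/p}):K]=p$ is coprime to $[K(\zeta_p):K]$ when $\zeta_p\notin K$, and trivially otherwise), and for any unramified $\l$ whose Frobenius lies in this set one has $u\notin(K_\l^\times)^p$. Pick such an $\l$ outside $\Sigma$.

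The key termination step is to verify that $\Sigma' := \Sigma\cup\{\l\}$ gives $\dim V_{\Sigma'} < \dim V_\Sigma$. Since $\l$ is principal in $\O_{K,\Sigma}$, say $\l=(\pi)$, one has a short exact sequence
$$0 \to \O_{K,\Sigma}^\times/p \to \O_{K,\Sigma'}^\times/p \map{\ord_\l} \Z/p\Z \to 0,$$
and an element of $V_{\Sigma'}$ must have trivial image in $K_\l^\times/(K_\l^\times)^p$, hence trivial valuation at $\l$. Thus $V_{\Sigma'}$ is contained in $\O_{K,\Sigma}^\times/p$, and moreover consists of those $[x]\in V_\Sigma$ whose image in $K_\l^\times/(K_\l^\times)^p$ is trivial. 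By construction $[u] \notin V_{\Sigma'}$, so $V_{\Sigma'} \subsetneq V_\Sigma$ and the induction closes. The only mildly subtle point is this strict-decrease argument: it relies crucially on having arranged $\Pic(\O_{K,\Sigma})=0$ in the first stage so that the newly added prime is principal, ensuring that the new generator $[\pi]$ we add to $\O_{K,\Sigma}^\times/p$ is detected at the newly added local factor.
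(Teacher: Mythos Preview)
Your proof is correct and follows essentially the same two-stage strategy as the paper: first enlarge $\Sigma$ to kill $\Pic(\O_{K,\Sigma})$, then iteratively add a Chebotarev prime $\l$ at which a chosen $u$ in the kernel is not a local $p$-th power, using principality of $\l$ in $\O_{K,\Sigma}$ to show the kernel strictly shrinks. The only cosmetic difference is that the paper deduces $u\notin (K(\bmu_p)^\times)^p$ from the vanishing of $H^1(K(\bmu_p)/K,\bmu_p)$, whereas you use the coprimality of $[K(u^{1/p}):K]=p$ and $[K(\bmu_p):K]\mid p-1$; these are equivalent observations.
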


\begin{proof}
First, enlarge $\Sigma$ if necessary by adding primes $\l$ whose classes generate 
the ideal class group of $\O_K$.  After this we will have \eqref{h2a}.  
Further increases will preserve this condition.  

Let $f_\Sigma$ denote the natural map 
$\O_{K,\Sigma}^\times/(\O_{K,\Sigma}^\times)^p \to \prod_{v \in \Sigma} K_v^\times/(K_v^\times)^p$, 
and suppose $u \in \ker(f_\Sigma)$ is nontrivial.
The kernel of $K^\times/(K^\times)^p \to K(\bmu_p)^\times/(K(\bmu_p)^\times)^p$ 
is $H^1(K(\bmu_p)/K,\bmu_p) = 0$, so $u \notin (K(\bmu_p)^\times)^p$ 
and $[K(\bmu_p,u^{1/p}):K(\bmu_p)] = p$.  
Let $\l$ be a prime of $K$ whose Frobenius automorphism in $\Gal(K(\bmu_p,u^{1/p})/K)$ 
has order $p$.  Then $u \notin (K_\l^\times)^p$, so $f_{\Sigma\cup\{\l\}}(u) \ne 1$.

Let $\Sigma' := \Sigma \cup \{\l\}$.  Since $\Pic(\O_\Sigma) = 0$, there is a 
$\lambda \in \O_{K,\Sigma'}$ such that $\ord_\l(\lambda) = 1$.  Thus 
$\O_{K,\Sigma'}^\times = \O_{K,\Sigma}^\times \times \ld\lambda\rd$, 
where $\ld\lambda\rd$ is the infinite cyclic group generated by $\lambda$.  
The map $\ld\lambda\rd/\ld\lambda^p\rd \to K_\l^\times/(K_\l^\times)^p$ is injective, 
so $\ker(f_{\Sigma'}) \subsetneq \ker(f_\Sigma)$
and the inclusion is strict because $\ker(f_\Sigma)$ contains $u$ and $\ker(f_{\Sigma'})$ does not.
Replacing $\Sigma$ by $\Sigma'$, we can continue in this way until $\ker(f_\Sigma) = 1$, 
i.e., until \eqref{h2b} holds.
\end{proof} 

\begin{lem}
\label{4.3}
Define the subgroup $\A \subset K^\times/(K^\times)^p$ by 
$$
\A := \ker(K^\times/(K^\times)^p \to K(T)^\times/(K(T)^\times)^p).
$$
Then there is a canonical isomorphism 
$$
\A \isom \Hom(\Gal(K(T)/K(\bmu_p)),\bmu_p)^{\Gal(K(T)/K)},
$$
and $\A$ is cyclic, generated by an element $\Delta \in \O_{K,\Sigma}^\times$.
\end{lem}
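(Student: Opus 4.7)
The plan is to translate $\A$ into Galois cohomology and then compute. By Hilbert 90, $K^\times/(K^\times)^p \cong H^1(K,\bmu_p)$, and similarly over $K(T)$. Since $\wedge^2 T \cong \bmu_p$ we have $\bmu_p \subset K(T)$, so the inflation-restriction sequence for the extension $K(T)/K$ with coefficients in $\bmu_p$ identifies $\A$ with $H^1(\Gal(K(T)/K),\bmu_p)$.

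Set $G := \Gal(K(T)/K)$ and $H := \Gal(K(T)/K(\bmu_p))$. The quotient $G/H$ embeds into $\Fp^\times$ and so has order prime to $p$; since $\bmu_p$ is $p$-torsion this forces $H^i(G/H,\bmu_p) = 0$ for $i \ge 1$, collapsing Hochschild-Serre to
$$
H^1(G,\bmu_p) \isom H^1(H,\bmu_p)^{G/H} = \Hom(H,\bmu_p)^{G/H}.
$$
Any homomorphism from $H$ to an abelian group factors through $H^{\ab}$ and is fixed by inner automorphisms of $H$, so the $G/H$-invariants equal the $G$-invariants, yielding the claimed canonical isomorphism.

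For cyclicity I would observe that $H$ lies in the kernel of $\det : \Aut(T) \to \Fp^\times$ (the action of $G_K$ on $\wedge^2 T \cong \bmu_p$ is the cyclotomic character, which is trivial on $H$), so $H$ embeds into $\SL_2(\Fp)$. Since $|\SL_2(\Fp)| = p(p^2-1)$, every $p$-Sylow of $\SL_2(\Fp)$ has order $p$ and is cyclic; hence the $p$-part of $H^{\ab}$ is cyclic of order dividing $p$, so $\Hom(H,\bmu_p)$ — and \emph{a fortiori} $\A$ — is cyclic.

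Finally, to represent a generator $\Delta$ by a $\Sigma$-unit, I would pick any $a \in K^\times$ with $\bar{a} = \Delta$, so $a^{1/p} \in K(T)$, and $K(a^{1/p}) \subseteq K(T)$ is unramified outside $\Sigma$ because $\Sigma$ contains every prime where $T$ ramifies along with primes above $p$ and $\infty$. For $\l \notin \Sigma$, the local extension $K_\l(a^{1/p})/K_\l$ is unramified and $\l \nmid p$, which forces $\ord_\l(a) \equiv 0 \pmod{p}$. Hypothesis \eqref{h2a} then produces $c \in K^\times$ with $(c) = \prod_{\l \notin \Sigma}\l^{\ord_\l(a)/p}$, and replacing $a$ by $a c^{-p}$ yields a representative of $\Delta$ lying in $\O_{K,\Sigma}^\times$. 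The main thing to take care with is the $G$-equivariance in the Hochschild-Serre step and the invocation of \eqref{h2a} in the final step; once those are set up, the remaining pieces are routine.
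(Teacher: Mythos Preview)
Your proof is correct and follows essentially the same route as the paper: inflation--restriction to identify $\A$ with $H^1(\Gal(K(T)/K),\bmu_p)$, Hochschild--Serre through $K(\bmu_p)$ (using that $[K(\bmu_p):K]$ is prime to $p$), and the embedding $\Gal(K(T)/K(\bmu_p)) \hookrightarrow \SL_2(\Fp)$ for cyclicity. The only cosmetic difference is in the final step: the paper packages the valuation argument as a commutative diagram with $I_{K,\Sigma}/I_{K,\Sigma}^p \hookrightarrow I_{K(T),\Sigma}/I_{K(T),\Sigma}^p$ (injective because $K(T)/K$ is unramified outside $\Sigma$) to conclude $\A$ lies in the image of $\O_{K,\Sigma}^\times$, whereas you correct the valuations by hand using \eqref{h2a}; the content is the same.
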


\begin{proof}
The inflation-restriction sequence of Galois cohomology, together with 
the fact that $H^1(F,\bmu_p) = F^\times/(F^\times)^p$ for every field $F$ of characteristic 
different from $p$, shows that 
$$
\A = \ker(H^1(K,\bmu_p) \to H^1(K(T),\bmu_p)) = H^1(K(T)/K,\bmu_p).
$$
Since $[K(\bmu_p):K]$ is prime to $p$, the Hochschild-Serre spectral sequence gives an isomorphism
\begin{multline*}
\A = H^1(K(T)/K,\bmu_p) \isom H^1(K(T)/K(\bmu_p),\bmu_p)^{\Gal(K(T)/K)} \\
    = \Hom(\Gal(K(T)/K(\bmu_p)),\bmu_p)^{\Gal(K(T)/K)}.
\end{multline*}
Since $\Gal(K(T)/K(\bmu_p))$ is isomorphic to a subgroup of 
$\SL_2(\Fp)$, we see that 
$\Hom(\Gal(K(T)/K(\bmu_p)),\bmu_p)$ has order $1$ or $p$.  Thus $\A$ is cyclic.

Let $I_{K,\Sigma}$ (resp., $I_{K(T),\Sigma}$) denote the group of fractional ideals 
of $K$ (resp., $K(T)$) prime to $\Sigma$.  We have a commutative diagram 
$$
\xymatrix@C=15pt{
1 \ar[r] & \O_{K,\Sigma}^\times/(\O_{K,\Sigma}^\times)^p \ar[r] & K^\times/(K^\times)^p \ar[r] \ar[d]
   & I_{K,\Sigma}/I_{K,\Sigma}^p \ar[r]\ar[d] & 1 \\
&& K(T)^\times/(K(T)^\times)^p \ar[r] & I_{K(T),\Sigma}/I_{K(T),\Sigma}^p
}
$$
in which the top row is exact by \eqref{h2a}.  Since $K(T)/K$ is unramified outside of $\Sigma$, 
the right-hand vertical map is injective, so $\A$ (the kernel of the left-hand vertical map) 
is contained in the image of of $\O_{K,\Sigma}^\times$.  This completes the proof of the lemma.
\end{proof}

\begin{lem}
Suppose $p \le 3$, $E$ is an elliptic curve over $K$, and $T = E[p]$.  Then we can take the element 
$\Delta$ of Lemma \ref{4.3} to be the discriminant $\Delta_E$ of (any model of) $E$.
\end{lem}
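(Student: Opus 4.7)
My plan is to verify that $\Delta := \Delta_E$ satisfies the two properties that characterize a generator of $\A$ produced by Lemma \ref{4.3}: \emph{(i)} $\Delta_E$ becomes a $p$-th power in $K(T) = K(E[p])$, so that its class lies in $\A$; and \emph{(ii)} when $\A \ne 0$, the class of $\Delta_E$ in $K^\times/(K^\times)^p$ is nontrivial. Since $\Sigma$ contains all primes of bad reduction, $\Delta_E \in \O_{K,\Sigma}^\times$. Because $\A$ is cyclic of order $1$ or $p$ by Lemma \ref{4.3}, (i) and (ii) together imply that $\Delta_E$ generates $\A$. The cases $p = 2$ and $p = 3$ require different inputs, so I would treat them separately.

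For $p = 2$, fix a short Weierstrass model $y^2 = f(x)$ with $f$ a cubic, so that $\Delta_E = 16 \cdot \mathrm{disc}(f)$ and $K(E[2])$ is exactly the splitting field of $f$ over $K$. The discriminant of $f$ has a canonical square root $\prod_{i<j}(\alpha_i-\alpha_j) \in K(E[2])$ (with $\alpha_i$ the roots of $f$), and $16$ is already a square in $K$, so $\Delta_E \in (K(E[2])^\times)^2$, which gives (i). For (ii), note that $K(\bmu_2) = K$, so $\A = \Hom(\Gal(K(E[2])/K), \bmu_2)$; this is nontrivial iff the image of $\Gal(K(E[2])/K)$ in $S_3$ has even order, iff $\Delta_E$ is not a square in $K$. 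When that is the case, $K(\sqrt{\Delta_E})$ is the unique quadratic subextension of $K(E[2])/K$ and the class of $\Delta_E$ visibly generates $\A$.

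For $p = 3$, the Weil pairing gives $\bmu_3 \subset K(E[3])$. The essential input is the classical identity $\sqrt[3]{\Delta_E} \in K(E[3])$, which yields (i). This can be verified either by a direct (though unenlightening) computation with the $3$-division polynomial $\psi_3(x) = 3x^4+6ax^2+12bx-a^2$, or more conceptually via the Hesse-pencil description, in which the $9$ points of $E[3]$ appear as the flex points of the projective Weierstrass cubic and the four degenerate members of the pencil encode $\Delta_E$. For (ii), observe that $\Gal(K(E[3])/K(\bmu_3))$ embeds in $\SL_2(\F_3)$, whose abelianization is $\Z/3$; hence $\A$ has order $1$ or $3$, and $\A \ne 0$ precisely when the image of $\rho|_{G_{K(\bmu_3)}}$ is not contained in the commutator subgroup $Q_8 \subset \SL_2(\F_3)$. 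The same explicit construction that places $\sqrt[3]{\Delta_E}$ in $K(E[3])$ can be arranged to show that the resulting cube root is not fixed by $\rho^{-1}(Q_8)$, so $\sqrt[3]{\Delta_E} \notin K(\bmu_3)$ whenever $\A \ne 0$; combined with the fact that $\gcd([K(\bmu_3):K], 3) = 1$, a norm argument then gives $\Delta_E \notin (K^\times)^3$, proving (ii).

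The main obstacle is the classical identity $\sqrt[3]{\Delta_E} \in K(E[3])$ in the $p = 3$ case, together with the compatibility statement ensuring $\Delta_E$ generates, rather than merely lies in, $\A$. Both are consequences of the same slightly involved calculation with 3-division; once they are granted, the remainder is a clean exercise in Galois theory and Kummer theory.
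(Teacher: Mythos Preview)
Your approach is essentially the same as the paper's: show $\Delta_E$ lies in $\A$ (i.e., becomes a $p$-th power in $K(E[p])$) and that it is nontrivial there whenever $\A\ne 0$. For $p=2$ the arguments coincide.

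For $p=3$, the paper packages both facts into a single statement obtained from the universal elliptic curve with full level~$3$ structure (citing \cite{modpreps}): $\Delta_E\in(K(E[3])^\times)^3$ \emph{and} $[K(E[3]):K(\bmu_3,\Delta_E^{1/3})]$ divides $8$. The index bound immediately gives (ii), since if $\Delta_E$ were a cube in $K$ then $3\nmid [K(E[3]):K]$ and hence $\A=0$ by Lemma~\ref{4.3}. Your route through the subgroup structure of $\SL_2(\F_3)$ reaches the same conclusion, but the step ``the resulting cube root is not fixed by $\rho^{-1}(Q_8)$'' is exactly the content of the index bound and should be stated that way; as written it is slightly garbled (elements mapping into $Q_8$ \emph{do} fix $\Delta_E^{1/3}$ --- the point is that $\Gal(K(E[3])/K(\bmu_3,\Delta_E^{1/3}))$ is contained in $Q_8$, so has order dividing $8$). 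Once you phrase it as the index bound, your argument and the paper's are identical.
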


\begin{proof}
Since $\Sigma$ contains all primes where $E$ has bad reduction, 
we have $\Delta_E \in \O_{K,\Sigma}^\times \cdot(K^\times)^{12}$.
By Lemma \ref{4.3}, $\A = \{1\}$ if $p \nmid [K(E[p]):K]$.

Suppose first that $p = 2$.  Then $\Delta_E \in (K(E[2])^\times)^2$ and 
$[K(E[2]):K(\sqrt{\Delta_E})]$ divides $3$.
Thus $\Delta_E \in (K^\times)^2$ if and only if $2 \nmid [K(E[2]):K]$, so $\Delta_E$ generates $\A$.

Similarly, when $p = 3$, computing the discriminant of the universal elliptic curve with full level $3$ 
structure (see for example \cite[\S1.1]{modpreps}) shows that $\Delta_E \in (K(E[3])^\times)^3$ 
and that $[K(E[3]):K(\bmu_3,\sqrt[3]{\Delta_E})]$ divides $8$, so again $\Delta_E$ generates $\A$.
\end{proof}

Fix once and for all a $\Delta \in \O_{K,\Sigma}^\times$ as in Lemma \ref{4.3}.  
Recall (Definition \ref{ddef}) that $\Gamma_1 := \prod_{v \in \Sigma} \Xset(K_v)$, 
and more generally $\Gamma_\d := \prod_{v \in \Sigma}\Xset(K_v) \;\times 
   \prod_{\l \mid \d_2}\Xset_\ram(K_\l)$
for $\d\in\D$.
For each $v$, local class field theory identifies $\Xset(K_v)$ 
with $\Hom(K_v^\times,\bmu_p)$.

\begin{defn}
\label{gpmdef}
Define a ``sign'' homomorphism 
$
\sgn : \Gamma_1 \to \bmu_p
$
by 
$$
\sgn(\ldots,\gamma_v,\ldots) := \prod_{v \in \Sigma} \gamma_v(\Delta).
$$
Composing with the natural maps $\Gamma_\d \to \Gamma_1$ and $\Xset(K) \to \Gamma_1$, we will 
extend $\sgn$ to $\Gamma_\d$ for every $\d\in\D$, and to $\Xset(K)$.
\end{defn}

\begin{lem}
\label{5.6}
Suppose $\A$ is nontrivial, i.e., $\Delta \notin (K^\times)^p$.
\begin{enumerate}
\item
If $\l \in \cP_2$ and $\chi_\l \in \Xset(K_\l)$, then $\chi_\l(\Delta) = 1$.
\item
If $\l\in\cP_1$ and $\chi_\l\in\Xset(K_\l)$, then $\chi_\l(\Delta) = 1$ if and only if 
$\chi_\l$ is unramified. 
\item
If $p=2$, $\l\in\cP_0$, and $\chi_\l\in\Xset(K_\l)$, then $\chi_\l(\Delta) = 1$. 
\end{enumerate}
\end{lem}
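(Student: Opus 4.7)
The plan is to reduce each part to a statement about whether $\Delta$ is (or fails to be) a $p$-th power in $K_\l^\times$. By local class field theory, $\Xset(K_\l) = \Hom(K_\l^\times,\bmu_p)$, so the condition ``$\chi_\l(\Delta) = 1$ for every $\chi_\l\in\Xset(K_\l)$'' (which is what (i) and (iii) ask) is equivalent to $\Delta \in (K_\l^\times)^p$. For (ii), since $\l \notin \Sigma$ and $\bmu_p \subset K_\l$, local class field theory gives $\Xset(K_\l) \cong (\Z/p\Z)^2$ with the unramified subgroup of order $p$; the equivalence ``$\chi_\l(\Delta) = 1 \iff \chi_\l$ unramified'' will then follow from a dimension count, once I show that $\Delta$ is a unit at $\l$ (immediate from $\Delta \in \O_{K,\Sigma}^\times$, so every unramified character automatically kills $\Delta$) but $\Delta \notin (K_\l^\times)^p$ (so the subgroup of characters killing $\Delta$ has order exactly $p$ and must coincide with the unramified subgroup that it already contains).

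By Lemma \ref{4.3}, I can write $\Delta = \alpha^p$ with $\alpha \in K(T)^\times$; the rule $\sigma \mapsto \sigma(\alpha)/\alpha$ then defines the Kummer character $\chi_\Delta : \Gal(K(T)/K(\bmu_p)) \to \bmu_p$, which is nontrivial under the standing hypothesis $\A \ne 1$. Since $K(T)$ is unramified at any $\l \notin \Sigma$, the condition $\Delta \in (K_\l^\times)^p$ is equivalent to $\chi_\Delta(\Frob_\l) = 1$ whenever $\bmu_p \subset K_\l$ (which holds in all three cases of the lemma). Part (i) is then immediate from Lemma \ref{4.2}: $\l \in \cP_2$ forces $\Frob_\l = 1$, so $\chi_\Delta(\Frob_\l) = 1$.

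The heart of the proof is (ii), where I need $\chi_\Delta(\Frob_\l) \ne 1$ for $\l \in \cP_1$. My approach is a Sylow argument: the pairing \eqref{weilpair} identifies $G := \Gal(K(T)/K(\bmu_p))$ with a subgroup of $\SL_2(\F_p)$, whose order $p(p^2 - 1)$ contains $p$ to only the first power. Hence any index-$p$ subgroup of $G$, in particular $\ker(\chi_\Delta)$, has order prime to $p$ and therefore contains no element of order $p$. But by Lemma \ref{4.2}, $\l \in \cP_1$ is precisely the condition that $\Frob_\l$ has order $p$ in $G$, so $\Frob_\l \notin \ker(\chi_\Delta)$.

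Part (iii) is easier: with $p = 2$ we have $K(\bmu_2) = K$ and $\Gal(K(T)/K) \subset \GL_2(\F_2) \cong S_3$, whose elements have order $1$, $2$, or $3$. For $\l \in \cP_0$, Lemma \ref{4.2} gives $\Frob_\l^2 \ne 1$, forcing $\Frob_\l$ to have order $3$; any homomorphism to $\bmu_2$ kills elements of odd order, so $\chi_\Delta(\Frob_\l) = 1$ and $\Delta \in (K_\l^\times)^2$. The main obstacle in the whole argument is the Sylow step in (ii); everything else is straightforward Kummer-theoretic and Chebotarev-style bookkeeping.
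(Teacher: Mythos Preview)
Your proof is correct and follows essentially the same route as the paper's. The paper also reduces each part to whether $\Delta \in (K_\l^\times)^p$ by examining the action of $\Frob_\l$ on $\Delta^{1/p}$; for (ii) it phrases the key step as ``$[K(T):K(\bmu_p,\Delta^{1/p})]$ is prime to $p$'', which is exactly your Sylow observation that $p$ divides $|\SL_2(\F_p)|$ only once, so an element of order $p$ cannot lie in the index-$p$ kernel of $\chi_\Delta$.
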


\begin{proof}

By Lemma \ref{4.2}(i), if $\l\in\cP_2$ then $\Frob_\l$ fixes $K(T)$, so 
$\Frob_\l$ fixes $\Delta^{1/p}$, so $\Delta \in (K_\l^\times)^p$.  
This proves (i). 
Similarly, if $p=2$ and $\l\in\cP_0$ then Lemma \ref{4.2}(iii) shows that 
$\Frob_\l \in \Gal(K(T)/K)$ has order $3$.  
Thus again $\Frob_\l$ fixes $\sqrt{\Delta}$, so $\Delta \in (K_\l^\times)^2$.  
This proves (iii).  

If $\l\in\cP_1$ then Lemma \ref{4.2}(ii) shows that 
$\Frob_\l \in \Gal(K(T)/K)$ has order exactly $p$, and in particular $\Frob_\l \in \Gal(K(T)/K(\bmu_p))$.  
But since $\Delta \notin (K^\times)^p$, the degree $[K(T):K(\bmu_p,\Delta^{1/p})]$ is prime to $p$. 
Thus $\Frob_\l$ does not fix $\Delta^{1/p}$, so $\Delta \notin (K_\l^\times)^p$.  Therefore $\Delta$ 
generates $\O_\l^\times/(\O_\l^\times)^p \cong \Z/p\Z$, where $\O_\l$ is the ring of integers of $K_\l$.  
It follows that for $\chi_\l \in \Xset(K_\l)$, we have $\chi_\l(\Delta) = 1$ if and only if 
$\chi_\l(\O_\l^\times) = 1$.  This is (ii).
\end{proof}

\begin{lem}
\label{elem}
Suppose $G$ and $H$ are abelian groups, and $J \subset G \times H$ is a subgroup.  
Let $\pi_G$ and $\pi_H$ denote the projection maps from $G \times H$ to $G$ and $H$, respectively.
Let $J_0 := \ker(J \map{\pi_G} G/G^p)$.
\begin{enumerate}
\item
The image of the natural map $\Hom((G \times H)/J,\bmu_p) \to \Hom(H,\bmu_p)$ is 
$\Hom(H/\pi_H(J_0),\bmu_p)$.
\item

If $J/J^p \to G/G^p$ is injective, 
then $\Hom((G \times H)/J,\bmu_p) \to \Hom(H,\bmu_p)$ 
is surjective.
\end{enumerate}
\end{lem}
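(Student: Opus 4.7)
The plan is to exploit the fact that $\bmu_p$ has exponent $p$, so every homomorphism from an abelian group $A$ to $\bmu_p$ factors through $A/A^p$, reducing everything to linear algebra over $\F_p$. The starting observation is that since $G \times H$ is a product of abelian groups, every $\Phi \in \Hom(G \times H,\bmu_p)$ decomposes uniquely as $\Phi(g,h) = \psi(g)\phi(h)$, and $\Phi$ factors through $(G \times H)/J$ if and only if $\psi(\pi_G(j))\,\phi(\pi_H(j)) = 1$ for every $j \in J$. Hence $\phi \in \Hom(H,\bmu_p)$ lies in the image in (i) precisely when some $\psi \in \Hom(G,\bmu_p)$ can be chosen compatibly with $\phi$ on $J$.

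For (i), the necessity direction is immediate: for $j \in J_0$ we have $\pi_G(j) \in G^p$, so $\psi(\pi_G(j)) = 1$ automatically; the compatibility then forces $\phi(\pi_H(j)) = 1$, so $\phi$ kills $\pi_H(J_0)$. For sufficiency, given such a $\phi$ I would define a partial map $\tilde\psi$ on $\pi_G(J)$ by $\tilde\psi(\pi_G(j)) := \phi(\pi_H(j))^{-1}$; two preimages of the same element of $G$ differ by an element of $J_0$, so this is well-defined thanks to the hypothesis on $\phi$. The main step, and the one I expect to require the most care, is extending $\tilde\psi$ to a homomorphism $G \to \bmu_p$. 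Passing to $G/G^p$, which is an $\F_p$-vector space, this amounts to checking that $\tilde\psi$ descends to the subspace $\pi_G(J) G^p / G^p$---equivalently, vanishes on $\pi_G(J) \cap G^p$---after which $\F_p$-linearity lets me extend to all of $G/G^p$. The descent check is forced by the same mechanism: any element of $\pi_G(J) \cap G^p$ has the form $\pi_G(j)$ with $j \in J_0$, so $\tilde\psi$ is already trivial there.

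Part (ii) should then fall out of (i). The injectivity hypothesis on $J/J^p \to G/G^p$ is exactly the statement $J_0 = J^p$, whence
$$
\pi_H(J_0) = \pi_H(J^p) \subset \pi_H(J)^p \subset H^p.
$$
Since every $\phi \in \Hom(H,\bmu_p)$ factors through $H/H^p$, it automatically kills $\pi_H(J_0)$, and applying (i) produces the desired extension.
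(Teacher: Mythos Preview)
Your argument is correct. It differs in style from the paper's proof, which is a one-liner: the paper observes that there is an exact sequence of $\Fp$-vector spaces
\[
0 \too \pi_H(J_0) H^p/H^p \too H/H^p \too (G \times H)/J(G \times H)^p,
\]
and then applies $\Hom(\,\cdot\,,\bmu_p)$, which is exact on $\Fp$-vector spaces; (i) drops out immediately as the image equals the kernel of restriction to $\pi_H(J_0)H^p/H^p$, and (ii) follows from (i) exactly as you say. Your approach instead unpacks the decomposition $\Hom(G\times H,\bmu_p)\cong\Hom(G,\bmu_p)\times\Hom(H,\bmu_p)$ and builds the required $\psi$ by hand, first on $\pi_G(J)$ and then by extension over $\Fp$. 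The paper's route is shorter and avoids the explicit extension step (which, as you implicitly use, relies on $\bmu_p$ being injective as an $\Fp$-module); your route is more transparent about \emph{why} the condition $\phi|_{\pi_H(J_0)}=1$ is exactly what is needed, and it produces the extending character explicitly rather than by duality. Both are elementary and neither has any real advantage beyond taste.
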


\begin{proof}
We have an exact sequence of $\Fp$-vector spaces 
$$
0 \too \pi_H(J_0) H^p/H^p \too H/H^p \too (G \times H)/J(G \times H)^p.
$$
Assertion (i) follows by applying $\Hom(\;\cdot\;,\bmu_p)$, and (ii) follows directly from (i).
\end{proof}

\begin{lem}
\label{precft}
\begin{enumerate}
\item
The natural map 
$\O_{K,\Sigma}^\times/(\O_{K,\Sigma}^\times)^p \to \prod_{\l\notin\Sigma}\O_\l^\times/(\O_\l^\times)^p$
is injective.
\item
The kernel of the natural map 
$\O_{K,\Sigma}^\times/(\O_{K,\Sigma}^\times)^p \to \prod_{\l\in\cP_2}\O_\l^\times/(\O_\l^\times)^p$
is $\A$.
\end{enumerate}
\end{lem}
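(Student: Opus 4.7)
My plan is to deduce both parts from Chebotarev's density theorem applied to Kummer-type extensions: if $u \in \O_{K,\Sigma}^\times$ becomes locally a $p$-th power at sufficiently many primes, then a suitable Kummer extension collapses, forcing $u$ to be a global $p$-th power in a larger field. The strategies for (i) and (ii) are parallel, differing only in the choice of auxiliary extension: $K(\bmu_p)/K$ for (i) and $K(T)/K$ for (ii).

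For part (i), I would suppose $u \in \O_{K,\Sigma}^\times$ lies in the kernel, so $u \in (K_\l^\times)^p$ for every $\l \notin \Sigma$ (equivalent to being in $(\O_\l^\times)^p$ since $u$ is a unit). Form $L := K(\bmu_p, u^{1/p})$. For each such $\l$, the hypothesis forces $K_\l(u^{1/p}) \subset K_\l(\bmu_p)$, so if $\lambda_0$ is a prime of $K(\bmu_p)$ above $\l$ then $K(\bmu_p)_{\lambda_0} = K_\l(\bmu_p)$ already contains a $p$-th root of $u$, whence $\lambda_0$ splits completely in $L/K(\bmu_p)$. Such $\lambda_0$ have density one in $K(\bmu_p)$, so Chebotarev forces $L = K(\bmu_p)$, i.e.\ $u \in (K(\bmu_p)^\times)^p$. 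To descend, I would invoke $H^1(K(\bmu_p)/K, \bmu_p) = 0$ (since $[K(\bmu_p):K]$ divides $p-1$), so that $K^\times/(K^\times)^p \hookto K(\bmu_p)^\times/(K(\bmu_p)^\times)^p$ is injective; hence $u \in (K^\times)^p$, and any global $p$-th root of $u$ is automatically integral at primes outside $\Sigma$, giving $u \in (\O_{K,\Sigma}^\times)^p$.

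For part (ii), the containment $\A \subset \ker$ is immediate: if $u$ represents a class in $\A$ then $u \in (K(T)^\times)^p$, and for $\l \in \cP_2$ Lemma \ref{4.2}(i) says $\l$ splits completely in $K(T)/K$, so $K(T)_\lambda = K_\l$ already contains a $p$-th root of $u$. For the reverse inclusion, I would repeat the Chebotarev strategy with $K(\bmu_p)$ replaced by $K(T)$ and $L := K(T, u^{1/p})$: for $\l \in \cP_2$ and $\lambda \mid \l$ in $K(T)$ the hypothesis $u \in (K_\l^\times)^p = (K(T)_\lambda^\times)^p$ gives complete splitting of $\lambda$ in $L/K(T)$. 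The density input is that $\cP_2$ has density $1/[K(T):K]$ in $K$, and each $\l \in \cP_2$ has exactly $[K(T):K]$ primes of $K(T)$ above it, so the primes of $K(T)$ above $\cP_2$ have density one in $K(T)$. Chebotarev applied to $L/K(T)$ then forces $[L:K(T)] = 1$, i.e.\ $u \in (K(T)^\times)^p$, i.e.\ the class of $u$ lies in $\A$.

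The one real subtlety is the density bookkeeping in (ii): the gain of a full factor of $[K(T):K]$ in passing from primes of $K$ in $\cP_2$ to primes of $K(T)$ above them rests crucially on the fact (Lemma \ref{4.2}(i)) that every $\l \in \cP_2$ splits completely in $K(T)/K$, so that we recover a full density-one set of primes in $K(T)$. With that identity in hand, everything else is standard Chebotarev combined with Kummer theory, and no more delicate input is required.
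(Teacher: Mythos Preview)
Your argument is correct and uses the same ingredients as the paper's proof: Chebotarev for the Kummer extension $K(\bmu_p,u^{1/p})$ (resp.\ $K(T,u^{1/p})$) together with the injectivity $K^\times/(K^\times)^p \hookrightarrow K(\bmu_p)^\times/(K(\bmu_p)^\times)^p$. The only stylistic difference is that the paper argues contrapositively: given $\alpha$ not in $(\O_{K,\Sigma}^\times)^p$ (resp.\ not in $\A$), it applies Chebotarev once in the Galois extension $K(T,\alpha^{1/p})/K$ to produce a single prime $\l$ whose Frobenius is a chosen nontrivial $\sigma \in \Gal(K(T,\alpha^{1/p})/K(T))$; this $\l$ automatically lies in $\cP_2$ and has $\alpha \notin (\O_\l^\times)^p$, so the density bookkeeping you carry out for~(ii) is bypassed.
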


\begin{proof}
Suppose $\alpha\in\O_{K,\Sigma}^\times$, $\alpha \notin (\O_{K,\Sigma}^\times)^p$.  
Then $\alpha \notin (K(\bmu_p)^\times)^p$.
If $\l\notin\Sigma$ is any prime whose Frobenius in $\Gal(K(\bmu_p,\alpha^{1/p})/K(\bmu_p))$ is nontrivial,  
then $\alpha \in \O_\l^\times$ but $\alpha \notin (\O_\l^\times)^p$.  Thus $\alpha$ is not in the 
kernel of the map of (i), and (i) follows.

Now suppose $\alpha \notin \A$.  Then $\alpha^{1/p} \notin K(T)$, 
so we can choose a nontrivial automorphism $\sigma \in \Gal(K(T,\alpha^{1/p})/K(T))$.  
Suppose $\l\notin\Sigma$ is a prime whose Frobenius in $\Gal(K(T,\alpha^{1/p})/K)$ is $\sigma$.  
Then $\l\in\cP_2$ by Lemma \ref{4.2}(i), and 
$\alpha \in \O_\l^\times$ but $\alpha \notin (\O_\l^\times)^p$.  This shows that 
the kernel of the map of (ii) is contained in $\A$, and $\A$ is contained in the kernel 
by Lemma \ref{5.6}(i).
\end{proof}

\begin{prop}
\label{cft}
\begin{enumerate}
\item
The natural homomorphism $\Xset(K) \to \Gamma_1$ is surjective.
\item
If $\l\notin\Sigma$ and $\bmu_p \subset K_\l^\times$, then 
there is a $\chi\in\Xset(K)$ ramified at $\l$ and unramified outside of $\Sigma$ and $\l$.
\item
There is a finite subgroup of $\Xset(K)$, containing only characters 
unramified outside of $\Sigma$ and $\cP_2$, whose image in $\Gamma_1$ is 
$\ker(\sgn)$.
\end{enumerate}
\end{prop}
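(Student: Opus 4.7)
The plan is to prove all three parts uniformly using global class field theory together with Pontryagin duality: the maximal exponent-$p$ quotient of $G_K^\ab$ is $\iK/K^\times\iK^p$ by class field theory, and the natural restriction $\Xset(K) \to \Xset(K_v)$ corresponds to the inclusion $K_v^\times \hookto \iK$. In each part the assertion will be recast as a kernel computation for a map of finite-dimensional $\F_p$-vector spaces of the form $\bigoplus_{v \in \Sigma} K_v^\times/(K_v^\times)^p \to \iK/(K^\times \iK^p U)$, where $U$ is a suitable product of local unit groups outside $\Sigma$, and carried out using \eqref{h2a}, \eqref{h2b}, and Lemma \ref{precft}.

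For (i), surjectivity dualizes (with $U = 1$) to injectivity of $\bigoplus_{v \in \Sigma} K_v^\times/(K_v^\times)^p \to \iK/(K^\times\iK^p)$. An element $(x_v)$ of the kernel lifts to an idele equal to $\alpha \cdot \bq^p$ for some $\alpha \in K^\times$ and $\bq \in \iK$. Reading off components outside $\Sigma$ shows $\alpha$ is a local $p$-th power at every $v \notin \Sigma$; hypothesis \eqref{h2a} lets us write $\alpha = u \beta^p$ with $u \in \O_{K,\Sigma}^\times$ and $\beta \in K^\times$, and Lemma \ref{precft}(i) then forces $u \in (\O_{K,\Sigma}^\times)^p$, so $\alpha \in (K^\times)^p$ and the original element vanishes.

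For (ii), by class field theory the existence of such a $\chi$ amounts to the image of $\O_\l^\times$ being nontrivial in $G_\l := \iK/(K^\times \iK^p \prod_{v \notin \Sigma \cup \{\l\}}\O_v^\times)$. Since $\bmu_p \subset K_\l$ gives $\O_\l^\times/(\O_\l^\times)^p \cong \F_p$, we need only show the kernel of $\O_\l^\times \to G_\l$ equals $(\O_\l^\times)^p$. An element $u \in \O_\l^\times$ in the kernel satisfies $u = \alpha q_\l^p$ for some $\alpha \in K^\times$ which is a local $p$-th power at every $v \in \Sigma$ (from those components) and whose local valuations outside $\Sigma$ all lie in $p\Z$. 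Using \eqref{h2a} as before to reduce to $\alpha \in \O_{K,\Sigma}^\times$ modulo $p$-th powers, hypothesis \eqref{h2b} yields $\alpha \in (K^\times)^p$ and hence $u \in (\O_\l^\times)^p$.

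For (iii), take $U := \prod_{v \in \cP_0 \cup \cP_1}\O_v^\times$, so that the image of $\Xset_{\Sigma \cup \cP_2}(K) \subset \Xset(K)$ (characters unramified outside $\Sigma \cup \cP_2$) in $\Gamma_1$ is the Pontryagin annihilator of $\ker(\iota)$, where $\iota : \bigoplus_{v \in \Sigma} K_v^\times/(K_v^\times)^p \to \iK/(K^\times \iK^p U)$. Imitating the argument of (i), a kernel element produces $\alpha \in K^\times$ which, modulo $(K^\times)^p$, is the class of an element of $\O_{K,\Sigma}^\times$ that is a local $p$-th power at every $v \in \cP_2$; Lemma \ref{precft}(ii) then identifies such classes with powers of $\Delta$, so $\ker(\iota) \subseteq \F_p \cdot (\Delta,\ldots,\Delta)$. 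Conversely the adelic identity $(\Delta,\ldots,\Delta;\,1,\ldots) = \Delta \cdot (1,\ldots,1;\,\Delta^{-1},\ldots)$ exhibits $(\Delta,\ldots,\Delta)$ as an element of $\ker(\iota)$: the second factor lies in $\iK^p$ at every $v \in \cP_2$ by Lemma \ref{5.6}(i), and in $U$ at every $v \in \cP_0 \cup \cP_1$ since $\Delta \in \O_{K,\Sigma}^\times$. Since $\sgn$ is by construction evaluation at $(\Delta,\ldots,\Delta)$, the annihilator of this $\F_p$-line in $\Gamma_1$ is $\ker(\sgn)$, proving the image equals $\ker(\sgn)$; a finite subgroup is then produced by lifting any finite generating set of $\ker(\sgn)$ to $\Xset_{\Sigma \cup \cP_2}(K)$. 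The main obstacle is the joint use of Lemma \ref{precft}(ii) and Lemma \ref{5.6}(i) in (iii) to match $\ker(\iota)$ precisely with the line spanned by the diagonal $\Delta$.
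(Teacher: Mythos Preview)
Your proof is correct and follows essentially the same approach as the paper: both use class field theory to dualize the three assertions into kernel computations controlled by \eqref{h2a}, \eqref{h2b}, and Lemma~\ref{precft} (with Lemma~\ref{5.6}(i) for the converse inclusion in (iii)). The only difference is organizational---the paper first uses \eqref{h2a} to replace $\iK/K^\times$ by $\bigl(\prod_{v\in\Sigma}K_v^\times \times \prod_{\l\notin\Sigma}\O_\l^\times\bigr)/\O_{K,\Sigma}^\times$ and then invokes the abstract Lemma~\ref{elem} three times with different choices of $G$, $H$, $J$, whereas you work with the full idele class group and carry out each kernel computation by hand.
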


\begin{proof}
Let $\iK$ denote the ideles of $K$.  Global class field theory and \eqref{h2a} show that 
\begin{equation}
\label{ck}
\Xset(K) = \Hom(\iK/K^\times,\bmu_p) 
   = \textstyle\Hom((\prod_{v \in \Sigma}K_v^\times \times 
      \prod_{\l\notin\Sigma}\O_\l^\times)/\O_{K,\Sigma}^\times,\bmu_p).
\end{equation}

For (i), we apply Lemma \ref{elem}(ii) with 
$$
G := \prod_{\l\notin\Sigma}\O_\l^\times, \quad
   H := \prod_{v \in \Sigma}K_v^\times, \quad J := \O_{K,\Sigma}^\times.
$$  
Then $J/J^p \to G/G^p$ is injective by Lemma \ref{precft}(i), 
so Lemma \ref{elem}(ii) and \eqref{ck} show that
$$
\textstyle
\Xset(K) \onto \Hom(\prod_{v \in \Sigma}K_v^\times,\bmu_p) = \Gamma_1
$$
is surjective.

For (ii), we apply Lemma \ref{elem}(ii) with 
$$
G := \prod_{v \in \Sigma}K_v^\times, \quad 
   H := \prod_{v\notin\Sigma}\O_v^\times, \quad J := \O_{K,\Sigma}^\times.
$$
Assumption \eqref{h2b} says that the map $J/J^p \to G/G^p$ is injective, 
so by Lemma \ref{elem}(ii) and \eqref{ck} we have that
\begin{equation}
\label{ck2}
\textstyle
\Xset(K) \onto \Hom(\prod_{v\notin\Sigma}\O_v^\times,\bmu_p)
\end{equation}
is surjective.  If $\bmu_p \subset K_\l^\times$ then we can fix an element 
$\psi \in \Hom(\prod_{v\notin\Sigma}\O_v^\times,\bmu_p)$ such that $\psi(\O_\l^\times) \ne 1$ 
but $\psi(\O_v^\times) = 1$ if $v \ne \l$, and let $\chi \in \Xset(K)$ be a character that maps to $\psi$ 
under \eqref{ck2}.  
Then $\chi$ satisfies (ii).

For (iii), we apply Lemma \ref{elem}(i) with 
$$
G := \prod_{\l\in\cP_2}\O_\l^\times, \quad 
   H := \prod_{v \in \Sigma}K_v^\times \times \prod_{\l\in\cP_0\cup\cP_1}\O_\l^\times, 
   \quad J := \O_{K,\Sigma}^\times.
$$
Lemma \ref{precft}(ii) shows that $\ker(\O_{K,\Sigma}^\times/(\O_{K,\Sigma}^\times)^p \to G/G^p)$ 
is generated by $\Delta$. 
Now we deduce from \eqref{ck} and Lemma \ref{elem}(i) that the image of 
$$
\Xset(K) \to \Hom(\prod_{v \in \Sigma}K_v^\times \times \prod_{\l\in\cP_0\cup\cP_1}\O_\l^\times,\bmu_p)
$$
is 
$\Hom((\prod_{v \in \Sigma}K_v^\times \times \prod_{\l\in\cP_0\cup\cP_1}\O_\l^\times)/\ld\Delta\rd,\bmu_p)$.
Restricting to characters unramified at $\cP_0\cup\cP_1$ proves (iii), since 
$\ker(\sgn) = \Hom((\prod_{v \in \Sigma}K_v^\times)/\ld\Delta\rd,\bmu_p)$.
\end{proof}

\section{Parity disparity ($p = 2$)}
\label{pd2}

Fix $T$ and $\Sigma$ as in \S\ref{notation}, a global \metabolic structure $\bq$ on $T$ 
as in Definition \ref{ahsdef}, and twisting data as in Definition \ref{twistdata}.
In this section we let $p=2$ and we study how the parity of $\dim_{\F_2}\Sel(T,\chi)$ 
varies as $\chi$ varies.  The main result is Theorem \ref{sllem,p=2}.  When $T = E[2]$ 
with an elliptic curve $E/K$, Theorem \ref{sllem,p=2} specializes to Theorem \ref{thma} 
of the Introduction, and we make 
Theorem \ref{thma} more explicit in Proposition \ref{tables}, Corollary \ref{6.8}, 
and Example \ref{50a1}.

Suppose throughout this section that $p=2$ and that \eqref{h2a} and \eqref{h2b} 
are satisfied.  Let $\Delta \in \O_{K,\Sigma}^\times$ be as in Lemma \ref{4.3}.

If $\chi \in \Xset(K)$, let $r(\chi) := \dim_{\F_2}\Sel(T,\chi)$, 
where $\Sel(T,\chi)$ is given by Definition \ref{sstwist}, the Selmer group 
for the twist of $T$ by $\chi$.
If $T = E[2]$ with the natural twisting data, then $r(\chi) = \dim_{\F_2}\Sel_2(E^\chi/K)$ 
by Proposition \ref{4.9}.

Recall the function $\com{\chi}{\chi'} := \dim_{\F_2}\alpha_v(\chi)/(\alpha_v(\chi)\cap\alpha_v(\chi'))$ 
of Definition \ref{comp}.  

\begin{defn}
\label{omegadef}
For every $v \in \Sigma$, define a map (of sets) $\omega_v : \Xset(K_v) \to \{\pm1\}$ by 
$$
\omega_v(\chi_v) := (-1)^{\com{\one_v}{\chi_v}} \chi_v(\Delta).
$$
\end{defn}

\begin{prop}
\label{5.1,p=2}
Suppose $\chi \in \Xset(K)$.  Then 
$$
r(\chi) \equiv r(\one_K) \pmod{2} \iff \prod_{v \in \Sigma} \omega_v(\chi_v) = 1.
$$
\end{prop}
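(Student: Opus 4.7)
The plan is to combine Theorem \ref{kramer} (which controls the parity of Selmer ranks in terms of local data at $\Sigma$ and the width $w(\d)$) with the global reciprocity law applied to $\Delta$.

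Let $\d \in \D$ be the part of the conductor of $\chi$ supported on $\cP_1 \cup \cP_2$, so $\chi \in \Xset(\d)$. Applying Theorem \ref{kramer} with $\gamma := \eta(\one_K) = (\one_v)_{v\in\Sigma} \in \Gamma_1$ (so that $\Sel(T,\gamma) = \Sel(T,\one_K)$) and $\gamma' := \eta_\d(\chi) \in \Gamma_\d$ (so that $\Sel(T,\gamma') = \Sel(T,\chi)$) gives
$$
r(\chi) - r(\one_K) \equiv \sum_{v \in \Sigma} \com{\one_v}{\chi_v} + w(\d) \pmod{2}.
$$
Unpacking $\omega_v$ and using $p = 2$,
$$
\prod_{v\in\Sigma}\omega_v(\chi_v) = (-1)^{\sum_{v\in\Sigma}\com{\one_v}{\chi_v}} \cdot \prod_{v\in\Sigma}\chi_v(\Delta),
$$
so the proposition reduces to the identity $\prod_{v\in\Sigma}\chi_v(\Delta) = (-1)^{w(\d)}$.

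This identity is where class field theory enters. Since $\Delta \in K^\times$ and $\chi$ factors through the idele class group, Artin reciprocity gives $\prod_{v} \chi_v(\Delta) = 1$ (product over all places of $K$), hence $\prod_{v\in\Sigma}\chi_v(\Delta) = \prod_{\l\notin\Sigma}\chi_\l(\Delta)$. For $\l\notin\Sigma$ we have $\Delta\in\O_\l^\times$, and Lemma \ref{5.6} computes each factor: $\chi_\l(\Delta) = 1$ whenever $\l\in\cP_0\cup\cP_2$, while for $\l\in\cP_1$ we have $\chi_\l(\Delta) = -1$ exactly when $\chi_\l$ is ramified. Since the primes outside $\Sigma$ at which $\chi$ ramifies are precisely the prime divisors of $\d = \d_1\d_2$, the product collapses to $(-1)^{|\{\l\mid\d_1\}|}$, which equals $(-1)^{w(\d)}$ by the definition $w(\d) = |\{\l\mid\d_1\}| + 2|\{\l\mid\d_2\}|$. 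This proves the required identity, and hence the proposition.

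The main obstacle is essentially bookkeeping: ensuring Lemma \ref{5.6} applies uniformly. The lemma is stated assuming $\A$ is nontrivial; in the degenerate case $\Delta \in (K^\times)^2$ one has $\chi_v(\Delta) = 1$ for every $v$, and a short separate check shows that the triviality of $\A$ forces $\cP_1 = \emptyset$ (for $p=2$ the possible images of $\Gal(K(T)/K)$ in $\GL_2(\F_2) = S_3$ with trivial sign quotient are $\{1\}$ and $C_3$, neither of which admits order-$2$ Frobenius elements), so every $\d \in \D$ has even width and the identity holds trivially. The real content of the argument is the interplay between the change-of-Selmer-structure formula of Theorem \ref{kramer} and the fact that $\Delta$ distinguishes ramified from unramified local characters exactly at $\cP_1$.
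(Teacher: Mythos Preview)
Your proof is correct and follows essentially the same route as the paper: apply Theorem~\ref{kramer} with $\gamma = \eta(\one_K)$ and $\gamma' = \eta_\d(\chi)$, then use Lemma~\ref{5.6} together with the product formula $\prod_v \chi_v(\Delta) = 1$ to identify $(-1)^{w(\d)}$ with $\prod_{v\in\Sigma}\chi_v(\Delta)$. Your explicit treatment of the degenerate case $\Delta \in (K^\times)^2$ (where Lemma~\ref{5.6} does not directly apply but $\cP_1 = \emptyset$) is a nice bit of extra care that the paper leaves implicit; one small imprecision is that $\chi$ may also ramify at primes of $\cP_0$, not only at the divisors of $\d$, but since $\chi_\l(\Delta)=1$ for $\l\in\cP_0$ this does not affect your computation.
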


\begin{proof}
We will deduce this from Theorem \ref{kramer}.  Fix $\d \in \D$ such that $\chi \in \Xset(\d)$. 
If $\l \in \cP_0 \cup \cP_2$, then $\chi_\l(\Delta) = 1$ by Lemma \ref{5.6}(i,iii).
If $\l \in \cP_1$, then $\chi_\l(\Delta) = -1$ if $\l\mid\d$, and 
$\chi_\l(\Delta) = 1$ if $\l\nmid\d$, by Lemma \ref{5.6}(ii).  
Therefore
$$
\prod_{\l\notin\Sigma} \chi_\l(\Delta) = (-1)^{|\{\l: \text{$\l \in \cP_1$ and $\l \mid \d$}\}|} 
   = (-1)^{w(\d)}
$$
so by Theorem \ref{kramer},
$$
r(\chi) \equiv r(\one_K) \pmod{2} \iff \prod_{v \in \Sigma} (\omega_v(\chi_v)\chi_v(\Delta)) 
   \prod_{v\notin\Sigma} \chi_v(\Delta) = 1.   
$$
Global class field theory shows that $\prod_v \chi_v(\Delta) = 1$, and the proposition follows.
\end{proof}

\begin{defn}
\label{ckxdef}
Define a (set) function $\Xset(K) \to \Z_{>0}$ measuring the ``size'' of a character $\chi$ by
$$
\mag{\chi} := \max\{\N\l : \text{$\chi$ is ramified at $\l$}\}
$$
If $X > 0$, let $\Xset(K,X) \subset \Xset(K)$ be the subgroup
$$
\Xset(K,X) := \{\chi\in\Xset(K) : \mag{\chi} < X\}.
$$
\end{defn}

\begin{defn} 
\label{epdef}
For every $v \in \Sigma$ define
$$
\delta_v := \frac{1}{|\Xset(K_v)|}\sum_{\chi\in\Xset(K_v)}\omega_v(\chi), \quad\text{and}\quad
   \delta := (-1)^{r(\one_K)}\prod_{v\in\Sigma}\delta_v.
$$
\end{defn}

Note that $-1+2/|\Xset(K_v)| \le \delta_v \le 1$ for every $v$ (since $\omega_v(\one_v) = 1$) and 
$\delta \in [-1,1]$.

\begin{lem}
\label{sllemlem}
$$
\frac{|\{\gamma\in\Gamma_1 : \prod_{v\in\Sigma}\omega_v(\gamma_v) = 1\}|}{|\Gamma_1|} 
   = \frac{1+\prod_{v\in\Sigma}\delta_v}{2}.
$$
\end{lem}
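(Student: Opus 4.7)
The plan is to exploit the fact that $\prod_{v\in\Sigma}\omega_v(\gamma_v)\in\{\pm1\}$, so the indicator function of the event $\prod_{v\in\Sigma}\omega_v(\gamma_v) = 1$ is exactly $\tfrac{1}{2}\bigl(1+\prod_{v\in\Sigma}\omega_v(\gamma_v)\bigr)$. Summing this identity over $\gamma\in\Gamma_1$ and dividing by $|\Gamma_1|$ converts the combinatorial count into an average:
\begin{equation*}
\frac{|\{\gamma\in\Gamma_1 : \prod_{v\in\Sigma}\omega_v(\gamma_v) = 1\}|}{|\Gamma_1|}
 = \frac{1}{2} + \frac{1}{2|\Gamma_1|}\sum_{\gamma\in\Gamma_1}\prod_{v\in\Sigma}\omega_v(\gamma_v).
\end{equation*}

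Next I would use the fact that $\Gamma_1=\prod_{v\in\Sigma}\Xset(K_v)$ is a product and the summand factors across $v$. Thus the sum over $\Gamma_1$ of the product of $\omega_v(\gamma_v)$'s splits as a product of local sums:
\begin{equation*}
\sum_{\gamma\in\Gamma_1}\prod_{v\in\Sigma}\omega_v(\gamma_v)
 = \prod_{v\in\Sigma}\Bigl(\sum_{\chi_v\in\Xset(K_v)}\omega_v(\chi_v)\Bigr)
 = \prod_{v\in\Sigma}|\Xset(K_v)|\,\delta_v,
\end{equation*}
by the definition of $\delta_v$. Since $|\Gamma_1|=\prod_{v\in\Sigma}|\Xset(K_v)|$, dividing yields $\frac{1}{|\Gamma_1|}\sum_{\gamma}\prod_v\omega_v(\gamma_v)=\prod_{v\in\Sigma}\delta_v$, and substituting back gives $\frac{1+\prod_{v\in\Sigma}\delta_v}{2}$, as desired.

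There is no real obstacle here: the argument is just the standard ``Fourier'' trick that the proportion of $\pm1$-valued random variables equal to $+1$ is $(1+\text{mean})/2$, combined with the fact that the expectation of a product of independent coordinates factors. The only things being used are that each $\omega_v$ takes values in $\{\pm1\}$, that the $\gamma_v$'s vary independently in $\Gamma_1$, and the definition of $\delta_v$ as the average of $\omega_v$ over $\Xset(K_v)$.
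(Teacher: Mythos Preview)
Your proof is correct and follows essentially the same approach as the paper: both compute the signed count $\sum_{\gamma\in\Gamma_1}\prod_{v}\omega_v(\gamma_v)$, factor it over $v\in\Sigma$ using $\Gamma_1=\prod_v\Xset(K_v)$, and identify each local factor with $|\Xset(K_v)|\,\delta_v$. The paper phrases the first step as $N-(|\Gamma_1|-N)=\sum_\gamma\prod_v\omega_v(\gamma_v)$ rather than via the indicator $\tfrac{1}{2}(1+\prod_v\omega_v(\gamma_v))$, but this is the same computation.
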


\begin{proof}
Let $N = |\{\gamma\in\Gamma_1 : \prod_{v\in\Sigma}\omega_v(\gamma_v) = 1\}|$.  
Since $\Gamma_1 = \prod_{v\in\Sigma}\Xset(K_v)$, we have 
$$
N - (|\Gamma_1|-N) = \sum_{\gamma\in\Gamma_1} \prod_{v\in\Sigma}\omega_v(\gamma_v)
   = \prod_{v\in\Sigma}\biggl(\,\sum_{\gamma_v\in\Xset(K_v)}\omega_v(\gamma_v)\biggr)
$$
and dividing both sides by $|\Gamma_1| = \prod_{v\in\Sigma}|\Xset(K_v)|$ yields
$
2{N}/{|\Gamma_1|}-1 = \prod_{v\in\Sigma}\delta_v.
$
The lemma follows.
\end{proof}

\begin{thm}
\label{sllem,p=2}
For all sufficiently large $X$,
$$
\frac{|\{\chi \in \Xset(K,X) : \text{$\dim_{\F_2}\Sel(T,\chi)$ is even}\}|}{|\Xset(K,X)|} 
   = \frac{1+\delta}{2}.
$$
\end{thm}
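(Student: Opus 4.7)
The plan is to combine Proposition \ref{5.1,p=2} with the class field theory developed in \S\ref{lgc}. By Proposition \ref{5.1,p=2}, $\dim_{\F_2}\Sel(T,\chi)$ is even if and only if $\prod_{v \in \Sigma}\omega_v(\chi_v) = (-1)^{r(\one_K)}$. Since $\omega_v(\chi_v)$ depends only on $\chi_v$, this condition depends on $\chi$ only through its image $\eta(\chi) = (\chi_v)_{v \in \Sigma}$ in the finite group $\Gamma_1$. Thus the parity of $\dim_{\F_2}\Sel(T,\chi)$ factors through the map $\eta : \Xset(K) \to \Gamma_1$, and the theorem reduces to showing that $\chi \mapsto \eta(\chi)$ equidistributes over $\Gamma_1$ once it is restricted to $\Xset(K,X)$ for $X$ large.

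Next I would verify that for $X$ sufficiently large, $\eta|_{\Xset(K,X)} : \Xset(K,X) \to \Gamma_1$ is a surjective homomorphism of finite abelian groups. It is a homomorphism because $\Xset(K,X)$ is a subgroup of $\Xset(K)$: the primes ramified in a product or inverse of characters lie in the union of the individual ramification sets, so $\mag{\chi_1\chi_2} \le \max(\mag{\chi_1},\mag{\chi_2})$ and $\mag{\chi^{-1}} = \mag{\chi}$. By Proposition \ref{cft}(i), the full map $\eta : \Xset(K) \to \Gamma_1$ is surjective, and since $\Gamma_1$ is finite we may fix a preimage $\chi_\gamma \in \Xset(K)$ for each $\gamma \in \Gamma_1$; setting $X_0 := \max_\gamma \mag{\chi_\gamma}$, the restriction $\eta|_{\Xset(K,X)}$ is surjective for all $X > X_0$. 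A surjective homomorphism of finite groups has fibers of equal cardinality, so for any subset $S \subseteq \Gamma_1$ and every $X > X_0$,
$$
\frac{|\{\chi \in \Xset(K,X) : \eta(\chi) \in S\}|}{|\Xset(K,X)|} = \frac{|S|}{|\Gamma_1|}.
$$

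Applying this with $S := \{\gamma \in \Gamma_1 : \prod_{v \in \Sigma}\omega_v(\gamma_v) = (-1)^{r(\one_K)}\}$ and invoking Lemma \ref{sllemlem}, which says that the fraction of $\gamma \in \Gamma_1$ with $\prod_v \omega_v(\gamma_v) = 1$ is $(1 + \prod_v \delta_v)/2$, I obtain the result by splitting on the parity of $r(\one_K)$: when $r(\one_K)$ is even the answer is $(1 + \prod_v \delta_v)/2 = (1+\delta)/2$, and when $r(\one_K)$ is odd the complementary fraction $(1 - \prod_v \delta_v)/2$ again equals $(1+\delta)/2$ since $\delta = -\prod_v \delta_v$ in that case. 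The substantive work — and the place where the argument is delicate — really sits in the earlier Proposition \ref{5.1,p=2} (which rests on Theorem \ref{kramer} and hence on Theorem \ref{compsel}) together with Proposition \ref{cft}(i); once these are available, the present theorem is just a short equidistribution calculation on the finite abelian group $\Gamma_1$.
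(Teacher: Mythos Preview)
Your proposal is correct and follows essentially the same approach as the paper: both use Proposition \ref{5.1,p=2} to reduce to a parity condition that factors through $\eta : \Xset(K,X) \to \Gamma_1$, invoke Proposition \ref{cft}(i) for surjectivity (hence equal fiber sizes) once $X$ is large, and then apply Lemma \ref{sllemlem}. Your explicit case split on the parity of $r(\one_K)$ and your verification that $\Xset(K,X)$ is a subgroup are minor elaborations, but the argument is the same.
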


\begin{proof}
Suppose $X$ is large enough so that the natural group homomorphism $\eta : \Xset(K,X) \to \Gamma_1$ 
is surjective (this holds for all sufficiently large $X$ by Proposition \ref{cft}(i)).
By Proposition \ref{5.1,p=2}, the parity of $r(\chi)$ depends only on $\eta(\chi)$.  
Since $\eta$ is a homomorphism, all of its fibers have the same size, so by Proposition \ref{5.1,p=2}
$$
\frac{|\{\chi \in \Xset(K,X) : r(\chi) \equiv r(\one_K) \pmod{2}\}|}{|\Xset(K,X)|}
  =  \frac{|\{\gamma\in\Gamma_1 : \prod_{v\in\Sigma}\omega_v(\gamma_v) = 1\}|}{|\Gamma_1|}. 
$$
Now the theorem follows from Lemma \ref{sllemlem}.
\end{proof}

\begin{rem}
As the proof shows, the equality of Theorem \ref{sllem,p=2} holds with $\Xset(K,X)$ replaced by 
any subset $\B \subset \Xset(K)$ having the property that there is a subgroup $A \subset \Xset(K)$ 
such that the natural map $A \to \Gamma_1$ is surjective and $A\B = \B$.
\end{rem}

For the rest of this section, we fix an elliptic curve $E/K$ and take $T = E[2]$ 
with the natural twisting data of Definition \ref{etwda}.  
In this setting Proposition \ref{4.9} shows that 
Theorem \ref{sllem,p=2} specializes to Theorem \ref{thma} of the Introduction.
Proposition \ref{tables} below computes the $\delta_v$ for $E[2]$, 
in all cases when $v \nmid 2$, and 
in certain cases when $v \mid 2$.  
We first need the following lemma.

\begin{lem}
\label{twdelta}
Suppose $v \in \Sigma$ and $\psi\in\Xset(K_v)$.  Let $\omega_v$ be as in 
Definition \ref{omegadef}, and let $\omega_v^\psi$ be the corresponding quantity 
defined with the elliptic curve $E^\psi$ over $K_v$ in place of $E$.  Then for every $\chi\in\Xset(K_v)$, we have
$$
\omega_v^\psi(\chi)\omega_v^\psi(\psi) = \omega_v(\chi\psi).
$$
\end{lem}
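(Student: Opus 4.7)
The plan is to reduce everything to known identities about the pairings $h_v$ and then track the $\Delta$-values. By Definition \ref{etwda}, $\alpha_v^\psi(\chi)$ is the image of $E^{\psi\chi}(K_v)/2E^{\psi\chi}(K_v)$ under the composition identifying $E^\psi$-twists with $E$-twists; concretely (as already noted in the proof of Lemma \ref{ttd}) one has $\alpha_v^\psi(\chi) = \alpha_v(\chi\psi)$ for every $\chi \in \Xset(K_v)$. Consequently
$$
\compsi{\one_v}{\chi} = \com{\psi}{\chi\psi},
$$
and Lemma \ref{ttd} then gives the congruence
$$
\compsi{\one_v}{\chi} \equiv \com{\one_v}{\psi} + \com{\one_v}{\chi\psi} \pmod{2}.
$$

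Next I would unwind Definition \ref{omegadef} twice. Since $T=E^\psi[2]$ and $T=E[2]$ are identified as $G_K$-modules (Lemma \ref{sameT}), the element $\Delta$ of Lemma \ref{4.3} is the same for $E$ and $E^\psi$, and therefore
$$
\omega_v^\psi(\chi) = (-1)^{\compsi{\one_v}{\chi}} \chi(\Delta)
   = (-1)^{\com{\one_v}{\psi} + \com{\one_v}{\chi\psi}} \chi(\Delta).
$$
Specializing to $\chi = \psi$ and using $\psi\cdot\psi = \one_v$ (since $p=2$) together with $\com{\one_v}{\one_v}=0$ yields
$$
\omega_v^\psi(\psi) = (-1)^{\com{\one_v}{\psi}} \psi(\Delta).
$$

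Multiplying the two expressions, the two occurrences of $(-1)^{\com{\one_v}{\psi}}$ cancel modulo $2$, leaving
$$
\omega_v^\psi(\chi)\omega_v^\psi(\psi)
   = (-1)^{\com{\one_v}{\chi\psi}} (\chi\psi)(\Delta)
   = \omega_v(\chi\psi),
$$
which is exactly the claim. The only real work is the bookkeeping in the first step; once Lemma \ref{ttd} is invoked, nothing remains but elementary manipulation, so I do not anticipate a genuine obstacle. The one point worth double-checking is that $\Delta$ is common to $E$ and $E^\psi$, which is immediate from Lemma \ref{sameT}(i) since $\Delta$ is characterized by the Galois module $T=E[2]=E^\psi[2]$ via Lemma \ref{4.3}.
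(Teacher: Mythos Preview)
Your argument is correct and is essentially the paper's own proof: both expand $\omega_v^\psi(\chi)$ and $\omega_v^\psi(\psi)$ via Lemma~\ref{ttd}, multiply, and watch the two copies of $(-1)^{\com{\one_v}{\psi}}$ cancel to leave $\omega_v(\chi\psi)$. Your extra remark about $\Delta$ being common to $E$ and $E^\psi$ is a reasonable sanity check (the paper takes this for granted), though note that since $\psi\in\Xset(K_v)$ is only local, the cleanest justification is simply that the statement defines $\omega_v^\psi$ using the same global $\Delta$, which is consistent because $E^\psi[2]\cong E[2]$ as $G_{K_v}$-modules.
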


\begin{proof}
Let $\compsi{\one_v}{\chi}$ be as given by Definition \ref{comp} 
for $E^\psi$ in place of $E$.  Then by Lemma \ref{ttd}, 
\begin{multline*}
\omega_v^\psi(\chi)\omega_v^\psi(\psi) 
   = (-1)^{\compsi{\one_v}{\chi}}\chi(\Delta)(-1)^{\compsi{\one_v}{\psi}}\psi(\Delta) \\
   = (-1)^{\com{\one_v}{\psi}+\com{\one_v}{\chi\psi}+\com{\one_v}{\psi}}\chi\psi(\Delta) 
   = \omega_v(\chi\psi).
\end{multline*}
\end{proof}

\begin{prop}
\label{tables}
Suppose $E$ is an elliptic curve over $K$, and $T = E[2]$ with the natural twisting data.  
For every $v \in \Sigma$, 
let $m_v^\pm := |\{\gamma \in \Xset(K_v) : \omega_v(\gamma) = \pm1\}|$, 
and let $c_v := |K_v^\times/(K_v^\times)^2|$, so $c_v = 4$ if $v \nmid 2\infty$.  Then 
we have the following table, where if $v\nmid\infty$ then 
``type'' denotes the Kodaira type of the N\'eron model.
$$
\renewcommand{\arraystretch}{1.1}
\begin{array}{|c||c|c|c|c||c|}
\hline
\text{type of $v$} & \hskip6pt\mv{+}\hskip6pt & \hskip6pt\mv{-}\hskip6pt & \hskip6pt \delta_v\hskip6pt \\
\hline
\hline
\text{real} & 1 & 1 & 0 \\
\hline
\text{complex} & 1 & 0 & 1 \\
\hline
\text{split multiplicative} & 1 & c_v - 1 & 2/c_v - 1 \\
\hline
\text{type $\mathrm{I}_\nu$ or $\mathrm{I}_\nu^*$, $\nu > 0$, not split multiplicative} & c_v - 1 & 1 & 1-2/c_v \\
\hline
\text{good reduction or type $\mathrm{I}_0^*$, $v \nmid 2$} & 4 & 0 & 1 \\
\hline
\text{type $\mathrm{II}$, $\mathrm{IV}$, 
   $\mathrm{II}^*$, $\mathrm{IV}^*$, $\Delta \in (K_v^\times)^2$, $v \nmid 2$} & 4 & 0 & 1 \\
\hline
\text{type $\mathrm{II}$, $\mathrm{IV}$, 
   $\mathrm{II}^*$, $\mathrm{IV}^*$, $\Delta \notin (K_v^\times)^2$, $v \nmid 2$} & 2 & 2 & 0 \\
\hline
\text{type $\mathrm{III}$, $\mathrm{III}^*$, $-1 \in (K_v^\times)^2$, $v \nmid 2$} & 4 & 0 & 1 \\
\hline
\text{type $\mathrm{III}$, $\mathrm{III}^*$, $-1 \notin (K_v^\times)^2$, $v \nmid 2$} & 2 & 2 & 0 \\
\hline
\end{array}
$$
\end{prop}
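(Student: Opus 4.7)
The plan is to compute $\omega_v$ pointwise on $\Xset(K_v)$, using the formula $\omega_v(\chi_v) = (-1)^{\com{\one_v}{\chi_v}}\chi_v(\Delta)$ together with Lemma~\ref{prekramer}, which identifies $\com{\one_v}{\chi_v}$ with $\dim_{\F_2}E(K_v)/N_{F/K_v}E(F)$ where $F/K_v$ is the extension cut out by $\chi_v$. Once the $\omega_v(\chi_v)$ are known for all $\chi_v \in \Xset(K_v)$, both $m_v^\pm$ and $\delta_v$ are read off by summation. Throughout I will exploit Lemma~\ref{ttd} to reduce computations to the case where $E$ has a preferred form (split instead of non-split, good instead of $\mathrm{I}_0^*$, etc.), so that I only need to verify a small number of base cases and control how $\omega_v$ transforms under twisting.

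The archimedean cases are direct. For complex $v$, the group $\Xset(K_v)$ is trivial, so $\omega_v = 1$ and $\delta_v = 1$. For real $v$, only the sign character $\chi$ requires work: Lemma~\ref{prekramer} identifies $\com{\one_v}{\chi}$ with the $\F_2$-dimension of the component group of $E(\R)$, which is $1$ exactly when $\Delta>0$, while $\chi(\Delta) = \mathrm{sign}(\Delta)$. The two signs cancel in both cases, giving $\omega_v(\chi) = -1$, hence $m_v^+ = m_v^- = 1$ and $\delta_v = 0$.

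For nonarchimedean $v$ the strategy depends on reduction type. In the split multiplicative case the Tate parametrization $E(K_v) \cong K_v^\times/q^\Z$ makes the norm indices $[E(K_v) : N_{F/K_v} E(F)]$ computable in closed form from local class field theory applied to $F \subset K_v(q^{1/2},\bar{K_v})$. Combined with $\chi_v(\Delta)$ (recalling $\Delta$ and $q$ differ by a $12$th power up to units) this produces $\omega_v(\one_v)=1$ and $\omega_v(\chi_v)=-1$ for every nontrivial $\chi_v$, which is the asserted $m_v^+ = 1$, $m_v^- = c_v - 1$. For a non-split multiplicative curve $E$, Lemma~\ref{ttd} applied with $\psi$ equal to the unramified quadratic character trivializing the Galois action on the component group transfers the split computation, swapping the roles of $m_v^+$ and $m_v^-$ and yielding $\delta_v = 1 - 2/c_v$. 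For $v \nmid 2$ with good reduction, $\Delta \in \O_v^\times$, so $\chi_v(\Delta) = 1$ for unramified $\chi_v$; moreover for unramified $F$ one has $H^1(F/K_v, E) = 0$ so $\com{\one_v}{\chi_v} = 0$. For ramified $\chi_v$, a direct computation with the N\'eron model and formal group shows $\com{\one_v}{\chi_v}$ and $\chi_v(\Delta)$ cancel, producing $\omega_v(\chi_v)=1$ in all four cases; type $\mathrm{I}_0^*$ reduces to the good case via Lemma~\ref{ttd} applied to a ramified quadratic twist that absorbs the extra inertia.

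The remaining additive types (II, III, IV, II$^*$, III$^*$, IV$^*$) are handled by the same template: I enumerate the four characters $\chi_v \in \Xset(K_v)$, use Tate's algorithm to identify the Kodaira type of $E^{\chi_v}$ (which by Lemma~\ref{ttd} controls $\compsi{\one_v}{\chi_v} = \com{\psi}{\chi_v\psi}$), read off $\com{\one_v}{\chi_v}$ from the resulting $E^{\chi_v}(K_v)/N_{F/K_v}E^{\chi_v}(F) \cong E^{\chi_v}(K_v)[2]$ computation, and multiply by $\chi_v(\Delta)$. The dichotomies $\Delta \in (K_v^\times)^2$ versus $\notin$ (types II, II$^*$, IV, IV$^*$) and $-1 \in (K_v^\times)^2$ versus $\notin$ (types III, III$^*$) arise precisely because the component group and the $2$-torsion of the Néron model behave differently in these two cases. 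I expect the main obstacle to be the Tate-algorithm bookkeeping required to track the Kodaira type of every quadratic twist of $E$ simultaneously with the square-class of its discriminant; the resulting table is forced once one combines that bookkeeping with the parity identity $\com{\one_v}{\chi_v} + \com{\one_v}{\chi_v'} \equiv \com{\one_v}{\chi_v\chi_v'} \pmod 2$ supplied by Corollary~\ref{ell3}.
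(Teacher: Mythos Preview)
Your overall strategy---compute $\omega_v$ pointwise via Lemma~\ref{prekramer} and transfer between reduction types via Lemma~\ref{ttd} (the paper's Lemma~\ref{twdelta})---is exactly the paper's, and your treatment of the archimedean, multiplicative, good, and $\mathrm{I}_0^*$ cases matches it closely.

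Two points deserve correction. First, you have omitted the case $\mathrm{I}_\nu^*$ with $\nu>0$: your ``non-split multiplicative'' paragraph covers only non-split $\mathrm{I}_\nu$, and your ``remaining additive types'' list is $\mathrm{II},\mathrm{III},\mathrm{IV},\mathrm{II}^*,\mathrm{III}^*,\mathrm{IV}^*$. The paper groups $\mathrm{I}_\nu^*$ ($\nu>0$) with non-split $\mathrm{I}_\nu$ and handles both by the same twisting trick: there is always some $\psi$ (ramified, in the $\mathrm{I}_\nu^*$ case) with $E^\psi$ split multiplicative, and then Lemma~\ref{twdelta} together with the split computation gives the row.

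Second, your plan for the potentially good additive types is more laborious than necessary and, as written, somewhat garbled. For $\mathrm{II},\mathrm{IV},\mathrm{II}^*,\mathrm{IV}^*$ the paper does not enumerate twists at all: since the component group has odd order and the reduction is additive, $E(K_v)$ is $2$-divisible, so $\alpha_v(\one_v)=0$ and hence $h_v(\one_v,\chi)=0$ for every $\chi$, giving $\omega_v(\chi)=\chi(\Delta)$ immediately. Your proposed route would eventually arrive here, but note that Lemma~\ref{prekramer} gives $h_v(\one_v,\chi)=\dim_{\F_2}E(K_v)/N_{F/K_v}E(F)$, with no $E^{\chi}$ in sight; the appearance of $E^{\chi_v}(K_v)/N_{F/K_v}E^{\chi_v}(F)$ and of an unspecified $\psi$ in your final paragraph suggests you are conflating $h_v(\one_v,\chi)$ with $h_v^\psi(\one_v,\chi)$. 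For $\mathrm{III},\mathrm{III}^*$ the paper simply computes $E(K_v)[2]$, $E(F)[2]$, and the norm map directly from Tate's algorithm, then evaluates $\chi(\Delta)$ on each of the four characters; the dichotomy on $-1\in(K_v^\times)^2$ enters through the Hilbert-symbol computation of $\chi_\Delta(\Delta)$ and $\chi_{\Delta u}(\Delta)$, not through any twist bookkeeping.
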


\begin{proof}
Most of the entries in the table follow directly from calculations of Kramer \cite{kramer}, 
using Lemma \ref{prekramer}.
For every $v$, we have $\omega_v(\one_v) = 1$, and by definition $\delta_v = (m_v^+ - m_v^-)/c_v$.

\smallskip\noindent{\em Case 1: $v$ archimedean.}
If $v$ is complex then there is nothing to check.  Suppose $v$ is real, and 
let $\chi : K_v^\times \to \pm1$ be the sign character, the nontrivial element of $\Xset(K_v)$. 
By Lemma \ref{prekramer} and \cite[Proposition 6]{kramer}, we have
\begin{equation}
\label{ht}
(-1)^{\com{\one_v}{\chi}} = -\chi(\Delta)
\end{equation}
so $\omega_v(\chi) = -1$.  Thus $m_v^+ = m_v^- = 1$.

\smallskip\noindent{\em Case 2: $v$ split multiplicative.}
In this case Lemma \ref{prekramer} and \cite[Proposition 1]{kramer} show that if 
$\chi \in \Xset(K_v)$ is nontrivial, then \eqref{ht} holds, so $\omega_v(\chi) = -1$.  
Thus $m_v^+ = 1$ and $m_v^- = c_v-1$.

\smallskip\noindent{\em Case 3: $v$ type $\mathrm{I}_\nu$ or $\mathrm{I}_\nu^*$, $\nu > 0$, 
not split multiplicative.}
In this case there is a $\psi\in\Xset(K_v)$, $\psi \ne \one_v$, 
such that $E^\psi$ is split multiplicative (see for example \cite[\S1.12]{serre1972}).  
Case 2 showed that $\omega_v^\psi(\psi) = -1$, and so by Case 2 and Lemma \ref{twdelta} we have 
$m_v^+ = c_v-1$, $m_v^- = 1$.

\smallskip\noindent{\em Case 4: $v$ good reduction or type $\mathrm{I}_0^*$, $v \nmid 2$.}
If $E$ has good reduction at $v$, then 
Lemma \ref{prekramer} and \cite[Proposition 3]{kramer} show that 
$(-1)^{\com{\one_v}{\chi}} = \chi(\Delta)$ for every $\chi \in \Xset(K_v)$, so $\omega_v(\chi) = 1$.  
If $E$ has  reduction type $\mathrm{I}_0^*$, then $E$ has a quadratic twist with good reduction, 
so by Lemma \ref{twdelta} we again have $\omega_v(\chi) = 1$ for every $\chi$.  In either case 
$m_v^+ = c_v = 4$, $m_v^- = 0$. 

\smallskip\noindent{\em Case 5: $v$ type $\mathrm{II}$, $\mathrm{IV}$, 
   $\mathrm{II}^*$, or $\mathrm{IV}^*$, $v \nmid 2$.}
In this case the number of connected components of the N\'eron model is odd, 
and $E$ has additive reduction at $v$, so $E(K_v)$ is $2$-divisible.  Hence 
$\alpha_v(\chi)$ is zero for every $\chi$, so $\com{\one_v}{\chi} = 0$, so 
$\omega_v(\chi) = \chi(\Delta)$.  
Thus if $\Delta \in (K_v^\times)^2$, then $m_v^+ = c_v = 4$ and $m_v^- = 0$, 
and if $\Delta \notin (K_v^\times)^2$, then $m_v^+ = m_v^- = 2$.

\smallskip\noindent{\em Case 6: $v$ type $\mathrm{III}$ or $\mathrm{III}^*$, $v \nmid 2$.}
Suppose $\chi \in \Xset(K_v)$, $\chi \ne \one_v$, 
and let $F$ be the corresponding quadratic extension of $K_v$.
In this case Tate's algorithm \cite{tatealg} shows that 
$$
E(K_v)[2] \cong \Z/2\Z, \quad E(K_v) = E(K_v)[2] \times B, \quad E(F) = E(F)[2] \times B'
$$ 
with profinite abelian groups $B, B'$ of odd order, and $\ord_v(\Delta)$ is odd.  
If $F \ne K_v(\sqrt{\Delta})$, then $E(F)[2] = E(K_v)[2]$, so $\N_{F/K_v}E(F) = 2E(K_v) = B$, 
so $\com{\one_v}{\chi} = 1$ by Lemma \ref{prekramer}.  If $F = K_v(\sqrt{\Delta})$, 
then $E(F)[2] = E[2]$ and $\N_{F/K_v}E(F) = E(K_v)$, so $\com{\one_v}{\chi} = 0$ by Lemma \ref{prekramer}.

Fix $u \in \O_v^\times$, $u \notin (\O_v^\times)^2$.  We have 
$\Xset(K_v) = \{\one_v,\chi_\Delta, \chi_u, \chi_{\Delta u}\}$, where $\chi_a$ is the 
quadratic character corresponding to $K_v(\sqrt{a})$.
The discussion above showed that $\com{\one_v}{\one_v} = \com{\one_v}{\chi_\Delta} = 0$ and 
$\com{\one_v}{\chi_u} = \com{\one_v}{\chi_{\Delta u}} = 1$.
We have $\chi_u(\Delta) = -1$, since $K_v(\sqrt{u})/K_v$ is unramified and $\ord_v(\Delta)$ is odd.
With $F = K_v(\sqrt{\Delta})$ we have
$$
\chi_\Delta(\Delta) = 1 \iff \Delta \in \N_{F/K_v}F^\times 
   \iff -1 \in \N_{F/K_v}F^\times \iff -1 \in (K_v^\times)^2,
$$
and with $F = K_v(\sqrt{\Delta u})$ we have
\begin{multline*}
\chi_{\Delta u}(\Delta) = 1 \iff \Delta \in \N_{F/K_v}F^\times 
   \iff -u \in \N_{F/K_v}F^\times \\ \iff -u \in (K_v^\times)^2 \iff -1 \notin (K_v^\times)^2,
\end{multline*}
Combining these facts gives the entries in the last two rows of the table. 
\end{proof}

\begin{cor}
\label{6.8}
\begin{enumerate}
\item
If $K$ has a real embedding, then for all sufficiently large $X$ we have
$$
|\{\chi \in \Xset(K,X) : \text{$r(\chi)$ is odd}\}|
   = |\{\chi \in \Xset(K,X) : \text{$r(\chi)$ is even}\}| = \frac{|\Xset(K,X)|}{2}.
$$
\item
If $K$ has no real embeddings, $E/K$ is semistable, and $E$ has multiplicative  
reduction at all primes above $2$, then for all sufficiently large $X$
$$
\frac{|\{\chi \in \Xset(K,X) : \text{$r(\chi)$ is even}\}|}{|\Xset(K,X)|} 
  = \frac{1+\delta}{2} \notin \{0, \textstyle\frac{1}{2}, 1\}.
$$
\end{enumerate}
\end{cor}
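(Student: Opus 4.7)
The plan is to extract both parts directly from Theorem \ref{sllem,p=2}, using the factorization $\delta = (-1)^{r(\one_K)}\prod_{v\in\Sigma}\delta_v$ from Definition \ref{epdef} together with the local values of $\delta_v$ tabulated in Proposition \ref{tables}. In both parts the whole task reduces to deciding whether $\delta$ can take the values $0$ or $\pm 1$.

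For (i), I would just fix a real place $v_0 \in \Sigma$; by the ``real'' row of the table, $\delta_{v_0} = 0$. Hence the entire product $\prod_v \delta_v$ vanishes, so $\delta = 0$, and Theorem \ref{sllem,p=2} immediately yields $(1+\delta)/2 = 1/2$ as the density of even-rank twists; by complementation the density of odd-rank twists is also $1/2$, which gives both equalities of (i).

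For (ii), it suffices to show $\delta \in (-1,0) \cup (0,1)$. I would split $\Sigma$ into three pieces and apply the table to each. First, every archimedean $v$ is complex by hypothesis, so $\delta_v = 1$. Second, by semistability every bad prime of $E$ has type $\mathrm{I}_\nu$ for some $\nu > 0$, and by the explicit hypothesis this applies to every $v \mid 2$ as well; for such $v$ the split and non-split rows give $\delta_v = \pm(1 - 2/c_v)$, and since $c_v = |K_v^\times/(K_v^\times)^2| \ge 4$ in every case (with $c_v \ge 8$ when $v \mid 2$), we have $0 < |\delta_v| < 1$. Third, any remaining $v \in \Sigma$ (added only to enforce \eqref{h2a} and \eqref{h2b}) has $v\nmid 2$ and good reduction, so the corresponding row of the table again gives $\delta_v = 1$.

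At least one prime above $2$ lies in $\Sigma$, and by the previous point it contributes a factor in $(-1,1)\setminus\{0\}$; all other factors are either $1$ or also in $(-1,1)\setminus\{0\}$, and none is zero. Therefore $\prod_v\delta_v \in (-1,0)\cup(0,1)$, and multiplying by the sign $(-1)^{r(\one_K)}$ preserves this, so $\delta \in (-1,0)\cup(0,1)$ and $(1+\delta)/2 \notin \{0,\tfrac{1}{2},1\}$. I do not foresee a real obstacle here; the only point requiring care is to verify that every possible $v\in\Sigma$ falls into one of the three categories above, which is precisely what the semistability assumption and the hypothesis on primes above $2$ guarantee (together with the table's entry for good reduction primes outside $2$).
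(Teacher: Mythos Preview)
Your proposal is correct and follows essentially the same route as the paper: both parts are read off Theorem \ref{sllem,p=2} via Proposition \ref{tables}, with (i) using $\delta_v=0$ at a real place, and (ii) using that archimedean and good-reduction factors give $\delta_v=1$ while the multiplicative factors give $|\delta_v|=1-2/c_v\in(0,1)$, forcing $\delta\notin\{0,\pm1\}$. The paper records the exact values $|\delta_v|=\tfrac12$ for bad $v\nmid 2$ and $|\delta_v|=1-2^{-[K_v:\Q_2]-1}$ for $v\mid 2$, but your cruder bound $0<|\delta_v|<1$ is all that is needed.
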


\begin{proof}
If $K$ has a real place $v$, then Proposition \ref{tables} shows that $\delta_v = 0$, 
so (i) follows from Theorem \ref{sllem,p=2}.

Under the hypotheses of (ii), Proposition \ref{tables} shows that
\begin{itemize}
\item 
$\delta_v = 1$ for every archimedean place and every place of good reduction, 
\item 
$|\delta_v| = \frac{1}{2}$ for every place $v \nmid 2$ of bad reduction, since $c_v = 4$,
\item 
$|\delta_v| = 1-2^{-[K_v:\Qp]-1} \in [\frac{3}{4},1)$ if $v \mid 2$, since $c_v = [K_v:\Qp]+2$.
\end{itemize}  
Thus $\delta \notin \{0,\pm1\}$ so (ii) follows from Theorem \ref{sllem,p=2} as well.
\end{proof}

\begin{exa}
\label{50a1}
\footnote{The proof of Example \ref{50a1} in the published version of this paper 
was incorrect, because we applied \cite[Theorem 1.3]{dokdok} incorrectly.  
The statement and proof here have been corrected.
We thank Lilybelle Cowland Kellock for pointing out the error.}Let 
$E$ be the elliptic curve labelled {\rm 50B1} in \cite{cremona}:
$$
y^2+xy+y = x^3 + x^2 - 3x - 1
$$
and let $K$ be a finite abelian extension of $\Q$ containing $\sqrt{-2}$, unramified at $5$.  
Then for all sufficiently large $X$, 
$$
\frac{|\{\chi \in \Xset(K,X) : \text{$r(\chi)$ is even}\}|}{|\Xset(K,X)|} 
   = \frac{1}{2} + \frac{(-1)^{[K:\Q]/2}}{2}\prod_{v \mid 2}(1-2^{-[K_v:\Q_2]-1}).
$$
As $K$ varies, these values are dense in the interval $[0,1]$.
\end{exa}

\begin{proof}
The discriminant of $E$ is $-2^5\cdot5^2$, which is a square in $K$, 
so $\Delta = 1$ in $K^\times/(K^\times)^2$.  Over $\Q_2$, $E$ has split 
multiplicative reduction, so $E$ has split multiplicative reduction at every  
prime of $K$ above $2$.
Over $\Q_5$, $E$ has Kodaira type II, and since $K/\Q$ is unramified at $5$, 
$E$ has Kodaira type II at all primes of $K$ above $5$.  
Further, since $K$ is unramified at $5$ we have $E(K)[2] = 0$.

Let $w(E/K_v)$ denote the local root number of $E$ over $K_v$.  
We have $w(E/K_v) = 1$ if $v \nmid 2 \cdot 5 \cdot \infty$.  
By \cite[Theorem 2]{rohrlich} we have 
$w(E/K_v) = -1$ if $v \mid 2$ or $v \mid \infty$, and $w(E/K_v) = 1$ if $v \mid 5$. 
Thus the global root number $w(E/K)$ is given by
$$
w(E/K) = \prod_v w(E/K_v) = (-1)^{n_\infty + n_2}
$$
where $n_2$ (resp., $n_\infty = [K:\Q]/2$) is the number 
of places of $K$ above $2$ (resp., above $\infty$).
By \cite[Theorem 1.3]{dokdok} (the ``$2$-Selmer parity conjecture''), 
combined with the Cassels pairing (see for example \cite[Proposition 2.1]{MRvisibility}), 
it follows that $\dim_{\F_2}\Sel_2(E/K) \equiv n_\infty + n_2 \pmod{2}$.

The field $K$ has no real embeddings, and $E$ has good reduction at 
all primes of $K$ not dividing $10$.  Hence Proposition \ref{tables} 
shows that the $\delta$ of Definition \ref{epdef} is given by 
$\delta = (-1)^{n_\infty+n_2}\prod_{v \mid 2}(2/|K_v^\times/(K_v^\times)^2|-1)$.  
For each $v$ dividing $2$ we have 
$$
|K_v^\times/(K_v^\times)^2| = 2^{[K_v:\Q_2]+2} 
$$
so the desired formula follows from Theorem \ref{sllem,p=2}.

To prove the final assertion, choose positive integers $m, t$. 
Choose finite abelian extensions 
$L_1, L_2$ of $\Q$, unramified at $5$, such that $[L_1:\Q] = t$, $[L_2:\Q] = m$, and 
$2$ splits completely in $L_1$ and remains prime in $L_2$. 
Let $K := L_1 L_2(\sqrt{-2})$.  
Then $K$ contains $\sqrt{-2}$, $K$ is abelian over $\Q$ and unramified at $5$, $[K:\Q] = 2mt$, 
$n_2 = [L_1:\Q] = t$, and $[K_v:\Q_2] = [L_2:\Q] = 2m$ if $v \mid 2$.
In this case the quantity on the right-hand side of the formula above is
$$
\frac{1}{2} + \frac{(-1)^{mt}}{2}(1-2^{-2m-1})^t.
$$
As $m$ and $t$ vary, the sets 
$$
\{\log((1-2^{-2m-1})^t): \text{$mt$ even}\}, \quad \{\log((1-2^{-2m-1})^t): \text{$mt$ odd}\}
$$ 
are both dense in $\R_{\le 0}$.  
It follows from the continuity of the exponential function that the set $\{(-1)^{tm}(1-2^{-2m-1})^t\}$ 
is dense in $[-1,1]$.  This completes the proof.
\end{proof}

Note that the Selmer rank and Mordell-Weil rank of $E^\chi$ are related by
\begin{align*}
\rk(E^\chi(K)) &\ge r(\chi) - \dim_{\F_2}E(K)[2] \\
\intertext{and if the $2$-part of the Shafarevich-Tate group $\Sh(E^\chi/K)$ is finite, then}
\rk(E^\chi(K)) &\equiv r(\chi) - \dim_{\F_2}E(K)[2] \pmod{2}.
\end{align*}
Thus by Theorem \ref{sllem,p=2} we expect that $\rk(E^\chi(K))$ 
is odd (and therefore at least one) for exactly 
$(1-(-1)^{\dim_{\F_2}E(K)[2]}\delta)/2$ of the twists $E^\chi$.
This leads to the following generalization of Goldfeld's conjecture 
\cite[Conjecture B]{goldfeld},
which follows from Theorem \ref{sllem,p=2} if we assume 
\begin{itemize}
\item
the $2$-parts of the Shafarevich-Tate groups of twists of $E$ are all finite,
\item
twists of rank at least $2$ are rare enough that they do not affect the average rank.
\end{itemize}

\begin{conj}
\label{gcK}
The average rank of the quadratic twists of $E/K$ is given by
$$
\lim_{X \to \infty}\frac{\sum_{\chi\in\Xset(K,X)}\rk(E^\chi(K))}{|\Xset(K,X)|} 
   = \frac{1-(-1)^{\dim_{\F_2}E(K)[2]}\delta}{2}.
$$
\end{conj}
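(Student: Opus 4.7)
The plan is to deduce Conjecture \ref{gcK} from Theorem \ref{sllem,p=2} under the two bulleted hypotheses stated just above. The first ingredient is the parity comparison between Selmer rank and Mordell-Weil rank: the defining exact sequence of the $2$-Selmer group gives
$$
\dim_{\F_2}\Sel_2(E^\chi/K) = \rk(E^\chi(K)) + \dim_{\F_2}E^\chi(K)[2] + \dim_{\F_2}\Sh(E^\chi/K)[2],
$$
and finiteness of $\Sh(E^\chi/K)[2^\infty]$ together with the alternating, non-degenerate Cassels-Tate pairing forces $\dim_{\F_2}\Sh(E^\chi/K)[2]$ to be even. Since $E^\chi[2] \cong E[2]$ as $G_K$-modules one has $E^\chi(K)[2] = E(K)[2]$, so
$$
\rk(E^\chi(K)) \equiv r(\chi) - \dim_{\F_2}E(K)[2] \pmod{2}.
$$
Combining this with Theorem \ref{sllem,p=2} and Proposition \ref{4.9} shows that for all sufficiently large $X$ the proportion of $\chi \in \Xset(K,X)$ for which $\rk(E^\chi(K))$ is odd equals $(1-(-1)^{\dim_{\F_2}E(K)[2]}\delta)/2$.

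Next I would convert this parity density into an average. Set $S_j(X) := \{\chi \in \Xset(K,X) : \rk(E^\chi(K)) = j\}$, so that
$$
\sum_{\chi \in \Xset(K,X)}\rk(E^\chi(K)) = |S_1(X)| + \sum_{j \geq 2}j\cdot|S_j(X)|.
$$
The second bulleted hypothesis, quantified as $\sum_{j \geq 2}j\cdot|S_j(X)| = o(|\Xset(K,X)|)$, makes the tail contribution negligible, and at the same time forces $|S_1(X)|$ to differ from the total number of odd-rank twists in $\Xset(K,X)$ by $o(|\Xset(K,X)|)$. Dividing by $|\Xset(K,X)|$ and passing to the limit therefore yields the claimed average.

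The main obstacle is not in this bookkeeping but in the two input hypotheses themselves. Finiteness of the $2$-primary Shafarevich-Tate group across an infinite family of twists is far beyond current technology, and the assertion that rank-$\geq 2$ twists contribute only $o(|\Xset(K,X)|)$ to the total rank is essentially a sharpened form of Goldfeld's conjecture. Thus the real content of the argument sketched above is a clean reduction: granting the two widely believed but open hypotheses, Theorem \ref{sllem,p=2} upgrades directly from a statement about Selmer parities to one about average Mordell-Weil ranks, which is precisely the content of Conjecture \ref{gcK}.
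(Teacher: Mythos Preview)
The statement is a \emph{conjecture}, and the paper does not offer a proof; it only remarks that the conjecture follows from Theorem~\ref{sllem,p=2} under the two bulleted hypotheses (finiteness of the $2$-part of $\Sh$ for all twists, and negligibility of rank-$\ge 2$ twists). Your write-up correctly fleshes out exactly this heuristic derivation---the parity comparison via the Cassels--Tate pairing, the identification $E^\chi(K)[2]=E(K)[2]$, and the bookkeeping converting parity density to average rank---and you rightly flag that both hypotheses are wide open, so there is nothing to add or correct.
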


\begin{exa}
\rm
This example shows that the fraction of even ranks given by Theorem \ref{sllem,p=2} 
does depend on the way we have chosen to order the twists.  Let $\Xset(K,X)$ be as above, 
and consider also another natural ordering
$$
\Xsetclassic(K,X) := \{\chi_d : d \in \O_K: |\N d| < X \}
$$
where $\chi_d$ is the character of $K(\sqrt{d})/K$.
Let $E$ be the elliptic curve 38B1 in \cite{cremona}
$$
y^2 + xy + y = x^3 + x^2 + 1
$$
and $K = \Q(i)$.  Then $r(\one_K) = 0$, and 
$E$ has split multiplicative reduction at the primes $(1+i)$ and $(19)$, 
and good reduction everywhere else.  
We have $|\Xset(K_{1+i})| = 2^4$ and $|\Xset(K_{19})| = 2^2$, and according to Proposition \ref{tables} 
we have $\omega_v(\chi_v) = 1$ if and only if $\chi_v = \one_v$, for $v = (1+i)$ or $(19)$ and 
$\chi_v \in \Xset(K_v)$.

If $X > \N(19) = 19^2$, then the images 
of the characters $\chi$ in the group $\Xset(K,X)$ are uniformly distributed in 
$\Xset(K_{1+i}) \times \Xset(K_{19})$.  Hence under the map
$$
\Xset(K,X) \too \Xset(K_{1+i}) \times \Xset(K_{19}) \map{\omega_{1+i} \times \omega_{19}} \{\pm1\}\times\{\pm1\}
$$
exactly $\frac{1}{16} \cdot \frac{1}{4} = \frac{1}{64}$ of them map to $(1,1)$ and 
$\frac{15}{16} \cdot \frac{3}{4} = \frac{45}{64}$ of them map to $(-1,-1)$.  Hence 
by Proposition \ref{5.1,p=2}, $r(\chi)$ is even 
for exactly $\frac{23}{32} = \frac{1}{2} + \frac{7}{32}$ of the $\chi\in\Xset(K,X)$.  
This is the content of Theorem \ref{sllem,p=2} in this case.

Now consider the density using $\Xsetclassic(K,X)$ instead of $\Xset(K,X)$.  The quadratic characters of 
$K$ correspond bijectively to squarefree integers $d \in \Z[i]$ modulo $\pm1$, and $\Xsetclassic(K,X)$ 
corresponds to $d$ with $\N d < X$.  These characters no longer map uniformly to 
$\Xset(K_{1+i}) \times \Xset(K_{19})$; for example, the fraction of characters unramified at $(19)$ 
(i.e., the fraction of squarefree $d$'s that are not divisible by $19$) is 
$19^2/(19^2+1)$, not $1/2$.  Of those that are unramified, half of the $d$'s are squares modulo $19$.
Reasoning in this way we see that under the map 
$$
\Xsetclassic(K,X) \too \Xset(K_{1+i}) \times \Xset(K_{19}) \map{\omega_{1+i} \times \omega_{19}} \{\pm1\}\times\{\pm1\}
$$
the fraction mapping to $(1,1)$ is  
$$
\biggl(\frac{1}{8}\cdot\frac{2}{2+1}\biggr)\cdot\biggl(\frac{1}{2}\cdot\frac{19^2}{19^2+1}\biggr) 
   = \frac{1}{12}\cdot\frac{361}{724} = \frac{361}{8688},
$$
and the fraction mapping to $(-1,-1)$ is $\frac{11}{12} \cdot \frac{363}{724} = \frac{1331}{2896}$.
We conclude by Proposition \ref{5.1,p=2} that 
$$
\lim_{X \to \infty} \frac{|\{\chi\in\Xsetclassic(K,X) : \text{$r(\chi)$ is even}\}|}{|\Xsetclassic(K,X)|}
   = \frac{361}{8688} + \frac{1331}{2896} = \frac{2177}{4344} = \frac{1}{2} + \frac{5}{4344}.
$$
\end{exa}

\section{Parity ($p > 2$)}
\label{pdp}

In this section suppose that $p>2$ and that \eqref{h2a}, \eqref{h2b} are satisfied.  
We will study how the parity of $\dim_{\F_p}\Sel(T,\chi)$ 
varies as $\chi$ varies. 

Recall that $\Xset(K) = \coprod_{\d\in\D}\Xset(\d)$.  
If $\chi \in \Xset(\d)$, let 
$$
w(\chi) := w(\d), \qquad r(\chi) := \dim_{\Fp}\Sel(T,\chi),
$$ 
where $\Sel(T,\chi)$ is given by Definition \ref{sstwist}, the Selmer group 
for the twist of $T$ by $\chi$.
Similarly, if $\gamma\in\Gamma_\d$ we let $r(\gamma) := \dim_{\Fp}\Sel(T,\gamma)$.

Let $\eta : \Xset(K) \to \Gamma_1$ be the natural homomorphism.

\begin{defn}
\label{epmdef}
Define 
$$
\parity := \frac{|\{\gamma\in\Gamma_1 : \text{$r(\gamma)$ is odd}\}|}{|\Gamma_1|}.
$$
\end{defn}

Note that $\rho$ cannot be $1/2$, since $|\Gamma_1|$ is odd.
The main result of this section is the following.

\begin{thm}
\label{sllem}
\begin{enumerate}
\item
If $p \nmid [K(T):K]$, then for all sufficiently large $X$
$$
\frac{|\{\chi \in \Xset(K,X) : \text{$r(\chi)$ is odd}\}|}{|\Xset(K,X)|} = \rho.
$$
\item
If $p \mid [K(T):K]$, then
$$
\lim_{X\to\infty}\frac{|\{\chi \in \Xset(K,X) : \text{$r(\chi)$ is odd}\}|}{|\Xset(K,X)|}
   = \frac{1}{2}.
$$
\end{enumerate}
\end{thm}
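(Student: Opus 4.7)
The plan is to apply Corollary \ref{kramercor}, which gives $r(\chi) \equiv r(\eta(\chi)) + w(\chi) \pmod{2}$, and to treat the two cases separately using the dichotomy in Lemma \ref{4.2}. Set
\[
n_1(\chi) := |\{\l \in \cP_1 : \chi \text{ is ramified at } \l\}|;
\]
since $w(\d) = |\{\l \mid \d_1\}| + 2|\{\l \mid \d_2\}|$, we have $w(\chi) \equiv n_1(\chi) \pmod{2}$. For (i), if $p \nmid [K(T):K]$ then by Lagrange no element of $\Gal(K(T)/K)$ has order $p$, so Lemma \ref{4.2}(ii) forces $\cP_1 = \emptyset$. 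Hence $n_1 \equiv 0$ and the parity of $r(\chi)$ depends only on $\eta(\chi)$. Proposition \ref{cft}(i) then gives, for all sufficiently large $X$, that $\eta : \Xset(K,X) \onto \Gamma_1$ is a surjective homomorphism with equal-sized fibers, so the proportion in question equals $\rho$ by definition.

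For (ii), Lemma \ref{4.2} together with $p \mid [K(T):K]$ gives that $\cP_1$ has positive density, so by the Chebotarev density theorem $|\cP_1(X)| \to \infty$, where $\cP_1(X) := \cP_1 \cap \{\l : \N\l < X\}$. Grouping characters by $\gamma := \eta(\chi)$ and writing $\Xset(K,X)_\gamma = \chi_0 A_X$ for any fixed preimage $\chi_0$ with $A_X := \ker(\eta) \cap \Xset(K,X)$, it suffices to prove that
\[
\frac{1}{|A_X|} \sum_{\psi \in A_X} (-1)^{n_1(\chi_0 \psi)} \longrightarrow 0 \quad\text{as } X \to \infty.
\]
For each $\l \in \cP_1(X)$ let $\zeta_\l : \Xset(K,X) \to \Fp$ denote the local restriction to $\Hom(\O_\l^\times/(\O_\l^\times)^p, \bmu_p) \cong \Fp$. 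Then $(-1)^{n_1(\chi_0 \psi)} = \prod_{\l \in \cP_1(X)} g(\zeta_\l(\chi_0) + \zeta_\l(\psi))$ where $g(0) = 1$ and $g(i) = -1$ for $i \ne 0$. Fourier-expand $g(j) = \tfrac{2-p}{p} + \tfrac{2}{p} \sum_{a \in \Fp^\times} \omega^{aj}$ with $\omega$ a primitive $p$th root of unity and sum over $\psi \in A_X$; by character orthogonality on the finite abelian $p$-group $A_X$, all cross terms vanish provided the characters $\{\zeta_\l|_{A_X}\}_{\l \in \cP_1(X)}$ are $\Fp$-linearly independent. Only the principal term survives, giving
\[
\frac{1}{|A_X|} \sum_{\psi \in A_X} (-1)^{n_1(\chi_0 \psi)} = \left(\frac{2-p}{p}\right)^{|\cP_1(X)|},
\]
independent of $\chi_0$. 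Since $|(2-p)/p| = (p-2)/p < 1$ for $p \ge 3$ and $|\cP_1(X)| \to \infty$, this tends to $0$, proving (ii).

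The main obstacle is establishing the linear independence of $\{\zeta_\l|_{A_X}\}_{\l \in \cP_1(X)}$, which is equivalent to surjectivity of the map $A_X \to \prod_{\l \in \cP_1(X)} \Fp$. This should follow by a strengthening of Proposition \ref{cft}(i)(ii): applying Lemma \ref{elem}(ii) with $\Sigma$ replaced by $\Sigma \cup \cP_1(X)$, and using that Lemma \ref{precft}(i) remains valid for the enlarged set, one obtains surjectivity of $\Xset(K) \onto \Gamma_1 \times \prod_{\l \in \cP_1(X)} \Xset(K_\l)$. One must then verify that this surjectivity descends to $\Xset(K,X)$, i.e., that target tuples have preimages with conductor bounded by $X$. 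This is the key technical point, since $|\cP_1(X)|$ grows with $X$; however, the relations obstructing surjectivity come only from the finitely generated groups $\Pic(\O_{K,\Sigma})$ and $\O_{K,\Sigma}^\times/(\O_{K,\Sigma}^\times)^p$ controlled by \eqref{h2a} and \eqref{h2b}, so the obstruction remains of bounded rank uniformly in $X$.
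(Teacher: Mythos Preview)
Your argument for part (i) is correct and essentially identical to the paper's.

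For part (ii) your Fourier approach is a reasonable alternative to the paper's recursive method (Lemmas \ref{l0}--\ref{l1} and Corollary \ref{l4}, which add primes one at a time and track a deviation function $s_n(\gamma)$), but there is a genuine gap: the claimed $\Fp$-linear independence of $\{\zeta_\l|_{A_X}\}_{\l\in\cP_1(X)}$ is \emph{false} whenever the group $\A$ of Lemma \ref{4.3} is nontrivial.  This certainly occurs in case (ii) (for instance when $\Gal(K(T)/K)$ has order $p$).  The point is that the generator $\Delta\in\O_{K,\Sigma}^\times$ is a $p$-th power in $\O_\l^\times$ for every $\l\in\cP_0\cup\cP_2$ (combine Lemma \ref{5.6}(i) with the observation that $\Frob_\l$ has order prime to $p$ for $\l\in\cP_0$ with $\bmu_p\subset K_\l$), but is \emph{not} a $p$-th power in $\O_\l^\times$ for $\l\in\cP_1$ (Lemma \ref{5.6}(ii)).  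Dualizing, this produces exactly one nontrivial relation $\sum_{\l\in\cP_1(X)} d_\l\,\zeta_\l = 0$ on $A_X$, with every $d_\l\ne 0$.  Your appeal to \eqref{h2a} and \eqref{h2b} in the final paragraph does not help: those hypotheses control the image of $\O_{K,\Sigma}^\times$ in $\prod_{v\in\Sigma}K_v^\times$, whereas the obstruction here lives entirely in the complementary factor $\prod_{\l\notin\Sigma}\O_\l^\times$, and ``bounded rank'' is not the same as ``zero''.

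The approach is repairable.  Carrying the single relation through your Fourier expansion, the surviving non-principal terms are indexed by $t\in\Fp^\times$, and since every $d_\l\ne 0$ each such term has coefficient $(2/p)^{|\cP_1(X)|}$ times a root of unity.  Hence the character sum is bounded in absolute value by $((p-2)/p)^{|\cP_1(X)|} + (p-1)(2/p)^{|\cP_1(X)|}$, which still tends to $0$.  But notice that this repair is precisely the content of the paper's $\sgn$ machinery: the relation coming from $\Delta$ is what makes $s_n(\gamma)$ depend only on $\sgn(\gamma)$ (Lemma \ref{l2}), and the two cases in Lemma \ref{l1} reflect whether or not the new prime interacts with it.  The paper's recursion handles the obstruction more transparently; your Fourier argument, once corrected, would yield the same $((p-2)/p)^N$ decay rate plus the $(2/p)^N$ correction.
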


\begin{proof}[Proof of Theorem \ref{sllem}(i)]
By Corollary \ref{kramercor}, $r(\chi) \equiv r(\eta(\chi)) + w(\chi) \pmod{2}$ 
for every $\chi$.  Since $p \nmid [K(T):K]$, Lemma \ref{4.2}(ii) shows that 
$w(\chi)$ is even for every $\chi$, 
so $r(\chi)$ is odd if and only if $r(\eta(\chi))$ is odd.  
By Proposition \ref{cft}(i), for all $X$ sufficiently large,
$\eta$ restricts to a surjective homomorphism of finite groups $\Xset(K,X) \to \Gamma_1$.  
In particular all fibers have the same size, so for large $X$
$$
|\{\chi \in \Xset(K,X) : \text{$r(\chi)$ is odd}\}| = 
   |\{\gamma\in\Gamma_1 : \text{$r(\gamma)$ is odd}\}|
   \;\frac{\Xset(K,X)}{|\Gamma_1|}
$$
which proves assertion (i) of the theorem.
\end{proof}

The rest of this section is devoted to the proof of Theorem \ref{sllem}(ii).  
Order the primes of $K$ not in $\Sigma$ by norm, $\N\l_1 \le \N\l_2 \le \cdots$.
For every $n$, let $C_n \subset \Xset(K)$ be the subgroup
$$
C_n := \{\chi \in \Xset(K) : \text{$\chi$ is unramified outside of $\Sigma \cup \{\l_1,\ldots,\l_n\}$}\}.
$$
For every $\gamma \in \Gamma_1$ define
$$
s_n(\gamma) := \frac{|\{\chi\in C_n : \text{$\eta(\chi) = \gamma$ and $w(\chi)$ is even}\}|}
     {|\{\chi\in C_n : \eta(\chi) = \gamma\}|} - \frac{1}{2}.
$$
We will show that $\lim_{n \to \infty} s_n(\gamma) = 0$ for every $\gamma \in \Gamma_1$.

\begin{lem}
\label{l0}
\begin{enumerate}
\item
If $\bmu_p \not\subset K_{\l_n}^\times$, then $C_n = C_{n-1}$.
\item
If $\bmu_p \subset K_{\l_n}^\times$, then there is a $\psi\in C_n$, 
ramified at $\l_{n}$ and unramified at $\l_1,\ldots,\l_{n-1}$, such that 
$C_n = \coprod_{i=0}^{p-1}\psi^i C_{n-1}$.  If $\chi\in C_{n-1}$ then
$$
w(\psi^i\chi) = 
   \begin{cases}
   w(\chi) & \text{if $p \mid i$} \\
   w(\chi) + k & \text{if $p \nmid i$ and $\l_n \in \cP_k$.}
   \end{cases} 
$$
\end{enumerate}
\end{lem}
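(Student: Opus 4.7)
For part (i), the plan is to show that under the hypothesis $\bmu_p \not\subset K_{\l_n}$, every character in $\Xset(K_{\l_n})$ is forced to be unramified, so the local condition at $\l_n$ defining $C_n$ is vacuous. By local class field theory $\Xset(K_{\l_n}) \cong \Hom(K_{\l_n}^\times, \bmu_p)$, and such a character is ramified precisely when its restriction to $\O_{\l_n}^\times$ is nontrivial. Since $\l_n\notin\Sigma$ implies $\l_n\nmid p$, the group $\O_{\l_n}^\times/(\O_{\l_n}^\times)^p$ is (non-canonically) isomorphic to $\bmu_p(K_{\l_n})$ by Hensel's lemma; this vanishes under the hypothesis of (i), so $\Hom(\O_{\l_n}^\times, \bmu_p) = 0$ and the equality $C_n = C_{n-1}$ follows immediately.

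For part (ii), I would first produce $\psi$ by applying Proposition \ref{cft}(ii) with $\l = \l_n$: this gives $\psi \in \Xset(K)$ ramified at $\l_n$ and unramified everywhere outside $\Sigma \cup \{\l_n\}$, so in particular $\psi$ is unramified at $\l_1, \ldots, \l_{n-1}$ and lies in $C_n$. For the coset decomposition I would examine the restriction homomorphism $C_n \to \Hom(\O_{\l_n}^\times, \bmu_p) \cong \Fp$, whose kernel is exactly $C_{n-1}$. Under the hypothesis of (ii) the target is cyclic of order $p$, and $\psi$ maps to a generator, so the map is surjective; hence $|C_n/C_{n-1}| = p$ and $C_n = \coprod_{i=0}^{p-1}\psi^i C_{n-1}$.

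For the conductor computation, fix $\chi\in C_{n-1}$ and $0 \le i \le p-1$. When $p \mid i$, $\psi^i = \one_K$ so $w(\psi^i \chi) = w(\chi)$ trivially. When $p \nmid i$, the character $\psi^i \chi$ is ramified at $\l_n$ (since $\chi$ is unramified there while $\psi^i$ is not), and at every other prime outside $\Sigma$ it has the same ramification as $\chi$ (since $\psi^i$ is unramified outside $\Sigma \cup \{\l_n\}$). Now by Definition \ref{ddef} the quantity $w$ is the sum of $w(\l)$ over ramified primes in $\cP_1 \cup \cP_2$ only, so adding $\l_n$ contributes $k = w(\l_n)$ when $\l_n \in \cP_1 \cup \cP_2$ and $0$ when $\l_n \in \cP_0$; in both cases this matches the stated formula $w(\psi^i\chi) = w(\chi) + k$.

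The only real subtlety I expect to manage is the possibility that $\l_n \in \cP_0$ while $\bmu_p \subset K_{\l_n}$: this occurs precisely when $\Frob_{\l_n}$ acts on $T$ with order prime to $p$ and greater than $1$, in which case $\psi^i\chi$ is ramified at $\l_n$ for $p\nmid i$ but this ramification contributes nothing to $w$, matching the formula $w(\psi^i\chi) = w(\chi)$ with $k = 0$. All other steps are direct applications of Proposition \ref{cft} and the definitions of $C_n$, $\Xset(\d)$, and $w(\d)$.
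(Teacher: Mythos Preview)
Your proposal is correct and follows essentially the same approach as the paper: the paper's proof invokes local class field theory for (i), uses Proposition~\ref{cft}(ii) to produce $\psi$ and notes that its restriction generates $\Hom(\O_{\l_n}^\times,\bmu_p)$ so that $\psi$ generates $C_n/C_{n-1}$, and then writes $w(\psi^i\chi) = w(\psi^i) + w(\chi) = k + w(\chi)$ using disjointness of the ramification. Your version simply makes each of these steps more explicit, including the $\l_n\in\cP_0$ case that the paper's formula handles implicitly.
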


\begin{proof}
If $\bmu_p \not\subset K_{\l_{n}}^\times$ then by local class field theory 
no character of order $p$ can ramify at $\l_{n}$, so $C_{n} = C_{n-1}$.
If $\bmu_p \subset K_{\l_{n}}^\times$, then by Proposition \ref{cft}(ii) there is a character $\psi \in C_n$ 
ramified at $\l_{n}$ and unramified at $\l_1,\ldots,\l_{n-1}$.  
The restriction of $\psi$ generates $\Hom(\O_{\l_n}^\times,\bmu_p)$, so $\psi$ generates $C_n/C_{n-1}$.  
If $\l_n \in \cP_k$ and $p \nmid i$ then 
$w(\psi^i\chi) = w(\psi^i) + w(\chi) = k + w(\chi)$.
This proves the lemma.
\end{proof}

Let $\Delta \in \O_{K,\Sigma}^\times$ be as in Lemma \ref{4.3}, and let 
$\sgn : \Gamma_1 \to \bmu_p$ be the homomorphism $\gamma \mapsto \prod_{v \in \Sigma} \gamma_v(\Delta)$ 
of Definition \ref{gpmdef}.

\begin{lem}
\label{l2}
There is an $N \in \Z_{>0}$ such that if $n \ge N$ then $s_n(\gamma)$ depends only on $n$ and 
$\sgn(\gamma)$.
\end{lem}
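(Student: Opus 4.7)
The plan is to exhibit a group action on the characters counted by $s_n(\gamma)$ that preserves the parity of $w$, and whose orbit on $\Gamma_1$ is a coset of $\ker(\sgn)$.

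The main tool is Proposition \ref{cft}(iii), which supplies a finite subgroup $A \subset \Xset(K)$ such that every $a \in A$ is unramified outside $\Sigma \cup \cP_2$ and such that $\eta(A) = \ker(\sgn) \subset \Gamma_1$. Since $A$ is finite, there is an integer $N$ such that $A \subset C_N$, and hence $A \subset C_n$ for every $n \ge N$. Fix this $N$.

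For any $a \in A$ and any $\chi \in C_n$ (with $n \ge N$), I claim $w(\chi a) \equiv w(\chi) \pmod{2}$. Indeed, write $w(\chi a) = |\{\l : \l \mid \d_1(\chi a)\}| + 2\,|\{\l : \l \mid \d_2(\chi a)\}|$. Since $a$ is unramified at every $\l \in \cP_1$, the ramification of $\chi a$ at primes in $\cP_1$ is identical to that of $\chi$, so $\d_1(\chi a) = \d_1(\chi)$. The $\cP_2$-part can change arbitrarily, but it contributes only to the even term $2|\{\l:\l\mid\d_2\}|$. Thus $w(\chi a) \equiv w(\chi) \pmod{2}$ as claimed.

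Now, because $C_n$ is a group containing $A$, multiplication by $a$ is a bijection $C_n \to C_n$. For $\gamma \in \Gamma_1$ this bijection sends the fiber $\eta^{-1}(\gamma) \cap C_n$ onto $\eta^{-1}(\gamma\,\eta(a)) \cap C_n$, and by the previous paragraph it preserves the parity of $w$. Therefore the numerator and denominator in the definition of $s_n$ are both invariant under $\gamma \mapsto \gamma\,\eta(a)$, giving
\[
s_n(\gamma) = s_n(\gamma\,\eta(a)) \quad \text{for every } a \in A,\ n \ge N.
\]
Since $\eta(A) = \ker(\sgn)$, the function $\gamma \mapsto s_n(\gamma)$ is constant on the cosets of $\ker(\sgn)$ in $\Gamma_1$, i.e.\ it factors through $\sgn : \Gamma_1 \to \bmu_p$. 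This is exactly the assertion of the lemma.

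The only substantive step is verifying the parity-preservation $w(\chi a) \equiv w(\chi) \pmod{2}$; once that is in hand the lemma follows by a purely formal group-action argument. Everything else (the existence of $N$, the fact that multiplication by $a$ is bijective, etc.) is immediate, so I do not anticipate a real obstacle.
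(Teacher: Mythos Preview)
Your proof is correct and follows essentially the same approach as the paper's. Both arguments invoke Proposition~\ref{cft}(iii) to obtain characters in $C_n$ that are unramified outside $\Sigma\cup\cP_2$ and whose images in $\Gamma_1$ fill out $\ker(\sgn)$, then observe that translating by such a character preserves $w\pmod 2$ (since the $\cP_1$-ramification is unchanged) and permutes the fibers of $\eta$; the only difference is that you phrase this as an action of the subgroup $A$, while the paper picks an individual translator $\psi_\gamma$ for each pair $\gamma_1,\gamma_2$ with $\sgn(\gamma_1)=\sgn(\gamma_2)$.
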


\begin{proof}
By Proposition \ref{cft}(iii), if $n$ is large enough then for every $\gamma\in \Gamma_1$ 
with $\sgn(\gamma) = 1$, there is 
a character $\psi_\gamma \in C_n$ ramified only at primes in $\Sigma\cup\cP_2$, such that $\eta(\psi_\gamma) = \gamma$.  

Suppose $\gamma_1, \gamma_2 \in \Gamma_1$ and $\sgn(\gamma_1) = \sgn(\gamma_2)$.  
Let $\gamma := \gamma_1^{-1}\gamma_2$.  Then multiplication by $\psi_{\gamma}$ gives a bijection
$$
\{\chi\in C_n : \eta(\chi) = \gamma_1\} \too \{\chi\in C_n : \eta(\chi) = \gamma_2\}.
$$
Further, since $\psi_\gamma$ is unramified at primes in $\cP_1$ we have 
$w(\chi) \equiv w(\psi_\gamma\chi) \pmod{2}$ for every $\chi \in C_n$.  
Thus $s_n(\gamma_1) = s_n(\gamma_2)$, which proves the lemma.
\end{proof}

Define $S_n = \frac{1}{|\Gamma_1|}\sum_{\gamma\in\Gamma_1}s_n(\gamma)$, the average of the $s_n(\gamma)$.

\begin{lem}
\label{l1}
Suppose $n > N$ with $N$ as in Lemma \ref{l2}.  
If $\bmu_p \not\subset K_{\l_{n}}^\times$ let $\psi := \one_K \in C_n$, and  
if $\bmu_p \subset K_{\l_{n}}^\times$ let $\psi \in C_n$ be as in Lemma \ref{l0}(ii). 
In either case let $\e := (-1)^k$ where $\l_n \in \cP_k$, and 
$\bar\psi := \eta(\psi) \in \Gamma_1$.
Then 
$$
s_n(\gamma) = 
\begin{cases}
\frac{1+\e(p-1)}{p}s_{n-1}(\gamma) & \text{if $\sgn(\bar\psi) = 1$,} \\
\frac{1-\e}{p}s_{n-1}(\gamma) + \e S_{n-1} & \text{if $\sgn(\bar\psi) \ne 1$.}
\end{cases}
$$
\end{lem}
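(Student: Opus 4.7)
The plan is to decompose $C_n$ along the partition over $C_{n-1}$ furnished by Lemma \ref{l0}, track how the parity of $w$ and the image under $\eta$ transform under translation by $\psi^i$, and then invoke Lemma \ref{l2} to reduce to an expression involving only $\sgn(\gamma)$ and the global average $S_{n-1}$.

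Concretely, set $t_n(\gamma) := \sum_{\chi \in C_n,\, \eta(\chi) = \gamma}(-1)^{w(\chi)}$, so that $s_n(\gamma) = t_n(\gamma)/(2N_n(\gamma))$ where $N_n(\gamma) := |\{\chi \in C_n : \eta(\chi) = \gamma\}|$.  Since $\eta|_{C_n}$ is a group homomorphism, surjective onto $\Gamma_1$ for all $n$ sufficiently large by Proposition \ref{cft}(i) (enlarging $N$ if necessary), all its fibers have the common size $N_n$.  When $\bmu_p \not\subset K_{\l_n}^\times$, Lemma \ref{l0}(i) gives $C_n = C_{n-1}$, so $s_n = s_{n-1}$, and Lemma \ref{4.2} forces $\l_n \in \cP_0$, hence $k = 0$ and $\e = 1$; the first asserted formula $\frac{1+\e(p-1)}{p}s_{n-1}(\gamma)$ collapses to $s_{n-1}(\gamma)$, as needed.

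When $\bmu_p \subset K_{\l_n}^\times$, the partition $C_n = \coprod_{i=0}^{p-1}\psi^i C_{n-1}$ from Lemma \ref{l0}(ii) applies.  For $\chi = \psi^i\chi'$ with $\chi' \in C_{n-1}$, Lemma \ref{l0}(ii) gives $\eta(\chi) = \bar\psi^i \eta(\chi')$, together with $(-1)^{w(\chi)} = (-1)^{w(\chi')}$ if $i = 0$ and $(-1)^{w(\chi)} = \e\,(-1)^{w(\chi')}$ if $i \ne 0$.  Summing over $\chi' \in C_{n-1}$ with $\eta(\chi') = \bar\psi^{-i}\gamma$, and using $N_n = p N_{n-1}$, this yields
$$
s_n(\gamma) = \frac{1}{p}\,s_{n-1}(\gamma) + \frac{\e}{p}\sum_{i=1}^{p-1} s_{n-1}(\bar\psi^{-i}\gamma).
$$

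To finish, apply Lemma \ref{l2}.  If $\sgn(\bar\psi) = 1$, then every $\bar\psi^{-i}\gamma$ has the same $\sgn$-class as $\gamma$, so every summand equals $s_{n-1}(\gamma)$ and the expression collapses to $\frac{1+\e(p-1)}{p}s_{n-1}(\gamma)$.  If $\sgn(\bar\psi) \ne 1$, then $\sgn(\bar\psi)$ generates $\bmu_p$, so as $i$ runs over $\{0,\ldots,p-1\}$ the values $\sgn(\bar\psi^{-i}\gamma)$ traverse $\bmu_p$ exactly once; moreover $\sgn : \Gamma_1 \to \bmu_p$ is then surjective with equi-sized fibers, so Lemma \ref{l2} produces $\sum_{i=0}^{p-1} s_{n-1}(\bar\psi^{-i}\gamma) = p\,S_{n-1}$, and substituting gives $\frac{1-\e}{p}s_{n-1}(\gamma) + \e\,S_{n-1}$.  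The main delicate point is this final identification: recognizing that in the nontrivial-sign case the partial sum over the $p-1$ nontrivial cosets is forced to complete to the full average $p\,S_{n-1}$, via equidistribution of $\bar\psi^{-i}\gamma$ across $\sgn$-classes combined with Lemma \ref{l2}.  The remaining parity-bookkeeping of $w$ under multiplication by $\psi^i$ is mechanical and flows directly from Lemma \ref{l0}(ii).
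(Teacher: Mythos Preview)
Your proof is correct and follows essentially the same route as the paper's: decompose $C_n$ via Lemma \ref{l0}, track the effect of multiplication by $\psi^i$ on both $\eta$ and the parity of $w$, arrive at the recursion $s_n(\gamma) = \frac{1}{p}s_{n-1}(\gamma) + \frac{\e}{p}\sum_{i\ne 0} s_{n-1}(\bar\psi^{-i}\gamma)$, and then invoke Lemma \ref{l2} to collapse the sum according to whether $\sgn(\bar\psi)$ is trivial. The paper's version is cosmetically different (it works directly with the fraction $1/2+s_n(\gamma)$ rather than introducing $t_n$, and writes $\bar\psi^i$ rather than $\bar\psi^{-i}$, which is immaterial since the sum runs over all nonzero residues), but the substance is identical.
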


\begin{proof}
If $\bmu_p \not\subset K_{\l_{n}}^\times$, then $\bar\psi = \one$,  
$C_{n} = C_{n-1}$ by Lemma \ref{l0}(i), and $\l_n \in \cP_0$ by definition,
so $\e = 1$ and $s_n(\gamma) = s_{n-1}(\gamma)$ for every $\gamma$.  Thus 
the formula of the lemma holds in this case.

Suppose now that $\bmu_p \subset K_{\l_{n}}^\times$, so $\psi$ is ramified at $\l_n$.  
Then for every $\gamma\in\Gamma_1$, $\chi\in C_{n-1}$, 
and $0 \le i < p$, Lemma \ref{l0}(ii) shows that
$$
\text{$\eta(\psi^i\chi) = \gamma$ and $w(\psi^i\chi)$ is even} \iff 
   \begin{cases}\text{$i=0$, $\eta(\chi) = \gamma$, and $w(\chi)$ is even, or} \\
   \text{$i\ne 0$, $\eta(\chi) = \bar\psi^{-i}\gamma$, and $(-1)^{w(\chi)}=\e$.}\end{cases}
$$
Thus, using that $C_n = \coprod_{i=0}^{p-1}\psi^i C_{n-1}$ by Lemma \ref{l0}(ii), we have
$$
1/2+s_n(\gamma) = \frac{(1/2+s_{n-1}(\gamma)) + \sum_{i=1}^{p-1}(1/2 + \e s_{n-1}(\bar\psi^i\gamma))}{p}
$$
or equivalently
\begin{equation}
\label{tag2}
s_n(\gamma) = \frac{(1-\e)s_{n-1}(\gamma) + \e\sum_{i=0}^{p-1}s_{n-1}(\bar\psi^i\gamma)}{p}.
\end{equation}
If $\sgn(\bar\psi) = 1$ then $s_{n}(\bar\psi^i\gamma) = s_{n-1}(\gamma)$ 
for every $i$ by Lemma \ref{l2}, and \eqref{tag2} becomes $s_{n}(\gamma) = \frac{1+(p-1)\e}{p}s_{n-1}(\gamma)$.
If $\sgn(\bar\psi) \ne 1$ then $\sgn(\bar\psi)$ generates $\bmu_p$, so (using Lemma \ref{l2})
$$
\sum_{i=0}^{p-1}s_{n-1}(\bar\psi^i\gamma) = \frac{p}{|\Gamma_1|}\sum_{\varphi\in\Gamma_1}s_{n-1}(\varphi)
   = p S_{n-1}.
$$ 
Now the final equality of the lemma follows from \eqref{tag2}.
\end{proof}

\begin{cor}
\label{l4}
If $p \mid[K(T):K]$ and $\gamma\in\Gamma_1$, then 
$
\lim_{n\to\infty} s_n(\gamma) = 0.
$ 
\end{cor}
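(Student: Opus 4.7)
The plan is to use the recursion in Lemma \ref{l1} to show first that the average $S_n = \frac{1}{|\Gamma_1|}\sum_\gamma s_n(\gamma)$ tends to zero, and then that each individual $s_n(\gamma)$ tends to zero. The essential input is Lemma \ref{4.2}: since $p \mid [K(T):K]$, the set $\cP_1$ has positive density, so infinitely many of the primes $\l_n$ lie in $\cP_1$.

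First I would simplify Lemma \ref{l1} into three regimes according to which $\cP_k$ contains $\l_n$. Using global reciprocity $\prod_v \psi_v(\Delta) = 1$ together with the facts that $\psi$ is unramified outside $\Sigma \cup \{\l_n\}$ and $\Delta$ is a unit outside $\Sigma$, one gets $\sgn(\bar\psi) = \psi_{\l_n}(\Delta)^{-1}$. Combining with Lemma \ref{5.6}: when $\l_n \in \cP_2$ we have $\sgn(\bar\psi) = 1$ and $\e = 1$, giving $s_n(\gamma) = s_{n-1}(\gamma)$; when $\l_n \in \cP_1$ the ramification of $\psi$ at $\l_n$ forces $\sgn(\bar\psi) \ne 1$, and $\e = -1$, giving $s_n(\gamma) = \frac{2}{p}s_{n-1}(\gamma) - S_{n-1}$; and when $\l_n \in \cP_0$ we have $\e = 1$, so $s_n(\gamma)$ equals either $s_{n-1}(\gamma)$ or $S_{n-1}$. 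The case $\bmu_p \not\subset K_{\l_n}$ falls into the first subcase of $\cP_0$ via $\psi = \one_K$.

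Second, averaging over $\gamma$ collapses these rules to $S_n = S_{n-1}$ whenever $\l_n \notin \cP_1$, and $S_n = -\frac{p-2}{p}S_{n-1}$ when $\l_n \in \cP_1$. Since $p \ge 3$ gives $\left|\frac{p-2}{p}\right| < 1$ and the $\cP_1$ case occurs infinitely often, $S_n \to 0$ geometrically. For the individual $s_n(\gamma)$, given $\e > 0$ choose $N'$ so that $|S_n| < \e$ for all $n \ge N'$; for $n > N'$ the value $|s_n(\gamma)|$ either stays fixed, is reset to $|S_{n-1}| < \e$, or satisfies $|s_n(\gamma)| \le \frac{2}{p}|s_{n-1}(\gamma)| + \e$. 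Iterating through the infinitely many $\cP_1$ steps yields $\limsup_n |s_n(\gamma)| \le \frac{p}{p-2}\e$, and letting $\e \to 0$ completes the proof.

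I expect the main obstacle to be the case analysis of Step 1, especially verifying via Lemma \ref{5.6} and global reciprocity that the $\cP_1$ case always falls into the contractive regime $\sgn(\bar\psi) \ne 1$, $\e = -1$, and checking that the two subcases arising inside $\cP_0$ both leave $S_n$ unchanged. Once this alignment is in place, the remainder is a routine contractive-recurrence argument powered by the positive density of $\cP_1$.
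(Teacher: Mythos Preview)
Your overall strategy---average the recursion of Lemma \ref{l1} to get $S_n \to 0$, then feed this back into the recursion to get each $s_n(\gamma) \to 0$---is exactly the paper's argument, and your contractive analysis in the second step is correct.

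However, your Step 1 detour through Lemma \ref{5.6} and global reciprocity to pin down $\sgn(\bar\psi)$ in each case is both unnecessary and slightly flawed. Lemma \ref{5.6} is stated under the hypothesis that $\A$ is nontrivial, i.e., $\Delta \notin (K^\times)^p$. The assumption $p \mid [K(T):K]$ does \emph{not} force this: for instance, if $\Gal(K(T)/K(\bmu_p)) = \SL_2(\F_p)$ with $p \ge 5$, then this group is perfect, so $\Hom(\Gal(K(T)/K(\bmu_p)),\bmu_p) = 0$ and $\A$ is trivial by Lemma \ref{4.3}. In that situation $\sgn \equiv 1$, so for $\l_n \in \cP_1$ you land in the \emph{first} case of Lemma \ref{l1}, giving $s_n(\gamma) = \frac{2-p}{p}s_{n-1}(\gamma)$ rather than $\frac{2}{p}s_{n-1}(\gamma) - S_{n-1}$. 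This is still contractive, so your conclusion survives, but your stated trichotomy and the specific bound $|s_n(\gamma)| \le \frac{2}{p}|s_{n-1}(\gamma)| + \e$ need an extra case.

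The paper bypasses all of this: averaging the two formulas in Lemma \ref{l1} over $\gamma \in \Gamma_1$ gives the \emph{same} recurrence $S_n = \frac{1+\e(p-1)}{p}S_{n-1}$ in both cases (check: $\frac{1-\e}{p}S_{n-1} + \e S_{n-1} = \frac{1+\e(p-1)}{p}S_{n-1}$), so the averaged recursion depends only on $\e$, never on $\sgn(\bar\psi)$. Likewise, for the second step one only needs that at each $\cP_1$ step the coefficient in front of $s_{n-1}(\gamma)$ has absolute value $\max(\frac{2}{p},\frac{p-2}{p}) < 1$; which branch of Lemma \ref{l1} actually occurs is irrelevant. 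Dropping your Step 1 case analysis entirely yields a cleaner and gap-free argument identical to the paper's.
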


\begin{proof}
Averaging over $\gamma\in\Gamma_1$ in Lemma \ref{l1} shows that
$$
S_n = 
\begin{cases}
S_{n-1} & \text{if $\e = 1$} \\
\frac{2-p}{p}S_{n-1} & \text{if $\e = -1$}.
\end{cases}
$$
If $p \mid [K(T):K]$, then $\cP_1$ is infinite by Lemma \ref{4.2}, and $\e= -1$ 
whenever $\l_n \in \cP_1$, so we deduce that $\lim_{n\to\infty} S_n = 0$.
Applying Lemma \ref{l1} again proves the corollary.
\end{proof}

\begin{proof}[Proof of Theorem \ref{sllem}(ii)]
By Corollary \ref{kramercor},  $r(\chi) \equiv r(\eta(\chi)) + w(\chi) \pmod{2}$ 
for every $\chi$, so 
\begin{equation}
\label{cie}
\text{$r(\chi)$ is odd $\iff w(\chi) \not\equiv r(\eta(\chi)) \pmod{2}$.}
\end{equation}

We may assume (Proposition \ref{cft}(i)) that $n$ is large enough so that 
the map $\eta : C_n \to \Gamma_1$ is surjective.  
Using \eqref{cie}, for every $\gamma\in\Gamma_1$ we have
$$
\frac{|\{\chi \in C_n : \text{$\eta(\chi) = \gamma$ and $r(\chi)$ is odd}\}|}
   {|\{\chi \in C_n : \text{$\eta(\chi) = \gamma$}\}|}
   = 1/2 - (-1)^{r(\gamma)} s_n(\gamma).
$$
Since $p\mid[K(T):K]$, Corollary \ref{l4} shows that $\lim_{n\to\infty}s_n(\gamma) = 0$, so
$$
\lim_{n \to \infty}\frac{|\{\chi \in C_n : \text{$\eta(\chi) = \gamma$ and $r(\chi)$ is odd}\}|}
   {|\{\chi \in C_n : \text{$\eta(\chi) = \gamma$}\}|}
   = 1/2.
$$
This holds for every $\gamma \in \Gamma_1$, so
$$
\lim_{X \to \infty}\frac{|\{\chi \in \Xset(K,X) : \text{$r(\chi)$ is odd}\}|}{|\Xset(K,X)|}
   =\lim_{n \to \infty}\frac{|\{\chi \in C_n : \text{$r(\chi)$ is odd}\}|}{|C_n|}
   = 1/2.
$$
This completes the proof of Theorem \ref{sllem}
\end{proof}

\begin{rem}
Fix an elliptic curve $E/K$, and let $p$ vary.  If $E$ does not have complex multiplication, 
then Serre's theorem \cite{serre1972} shows that 
$p\mid[K(T):K]$ for all but finitely many $p$, so Theorem \ref{sllem}(ii) shows that 
for all but finitely many $p$, half of the twists by characters of order $p$ 
have even $p$-Selmer rank and half have odd $p$-Selmer rank.
\end{rem}

\end{document}